\title{Strongly quasiconvex functions: what we know (so far)}
\author{Sorin-Mihai Grad\thanks{Corresponding Author. Unit\'e de Math\'ematiques
Appliqu\'ees, ENSTA Paris, Institut Polytechnique de Paris, 91120 Palaiseau, France.
 E-mail: sorin-mihai.grad@ensta-paris.fr, ORCID-ID: 0000-0002-1139-7504} \and
Felipe Lara\thanks{Instituto de Alta investigaci\'on (IAI), Universidad de Tarapac\'a,
Arica, Chile. E-mail: felipelaraobreque@gmail.com; flarao@uta.cl. Web:
felipelara.cl, ORCID-ID: 0000-0002-9965-0921} \and
Ra\'ul T. Marcavillaca\thanks{Centro de Modelamiento Matem\'atico (CMM),
Universidad de Chile, Santiago, Chile. E-mail: raultm.rt@gmail.com; rtintaya@dim.uchile.cl,
ORCID-ID: 0000-0003-3748-0768}}
\numberwithin{equation}{section}
\newcommand{\norm}[1]{||#1||}
\newcommand{\R}{\mathbb{R}}%
\newcommand{\N}{\mathbb{N}}%
\DeclareMathOperator{\px}{Prox}
\DeclareMathOperator{\pr}{Pr}
\DeclareMathOperator{\dom}{dom}
\DeclareMathOperator{\amin}{\arg\min}
\DeclareMathOperator*{\argmin}{argmin}
\DeclareMathOperator{\id}{Id}
\DeclareMathOperator{\cl}{cl}
\DeclareMathOperator{\inte}{int}
\DeclareMathOperator{\fx}{Fix}
\newtheorem{theorem}{Theorem}
\newtheorem{corollary}[theorem]{Corollary}
\newtheorem{definition}[theorem]{Definition}
\newtheorem{example}[theorem]{Example}
\newtheorem{proposition}[theorem]{Proposition}
\newtheorem{remark}[theorem]{Remark}
\newenvironment{proof}[1][Proof]{\noindent \textbf{#1.} }{\  \rule{0.5em}{0.5em}}
\begin{document}

\maketitle

\begin{abstract}
\noindent {\bf Abstract.}
Introduced by Polyak in 1966, the class of strongly quasiconvex functions includes some interesting nonconvex members, like the square root of the Euclidean norm or ratios with a nonnegative strongly convex numerator and a concave and positive denominator. This survey collects the vast majority of the results involving strongly quasiconvex functions available in the literature at the moment, presenting, in particular, algorithms for minimizing such functions, and suggests some directions where additional investigations would be welcome.
\medskip

\noindent{\small \emph{Keywords.} Strongly quasiconvex functions; proximal point algorithms; equilibrium problems; nonconvex optimization; subgradient methods.}
\end{abstract}

\medskip

\centerline{ {\it In loving memory of Boris Teodorovich Polyak.}}

\section{Introduction}

Introduced almost sixty years ago by Boris Polyak in the seminal paper \cite{POL} as a variety of the uniformly quasiconvex functions, the class of strongly quasiconvex functions includes some important nonconvex functions like the square root of the Euclidean norm or ratios with a nonnegative strongly convex numerator and a concave and positive denominator. Various properties of these functions were unearthed during these six decades by different authors in papers like \cite{JO1,KOR,LAR,VIA,VNC}, and, as anticipated by Polyak in \cite{POL}, the strongly quasiconvex functions play a role in the literature on calculus of variations as well (for instance, see \cite{KOS, IWK}). They turned thus to be more than a particular subclass of the quasiconvex functions (whose role in various applications, especially in economics and related fields, is well documented, see, for instance, \cite{D-1959,HAD,SZI,POG,TAN}).

In this survey we collected most of the known properties of the strongly quasiconvex functions and the vast majority of results involving them from the existing literature. Our work has been motivated by the recent developments in the literature, where this class of functions seems to be suitable for modeling different applications. Results known so far only for (strongly) convex functions were recently extended for strongly quasiconvex functions in directions where nothing of this kind has been done before, and this stresses the potential importance of this class of functions in the not so distant future in different fields of research and applications. Besides providing a state of the art in the literature on strongly quasiconvex functions to which some minor new results are added for completeness, a second goal of this survey is to ignite further investigations and employment of these functions.

Among the properties and results gathered in this paper we mention the existence of a unique minimizer of a strongly quasiconvex function over any closed convex set, the strong subdifferential that is a notion adapted to such functions and leads to a subgradient method for minimizing them, the convergence of the proximal point algorithm to the minimizer of a strongly quasiconvex function at a linear rate (that it thus available for convex functions that are strongly quasiconvex but not necessarily strongly convex, like the Euclidean norm), the convergence of trajectories of first-order and second-order dynamical systems to the minimizer of a differentiable strongly quasiconvex function as well as several algorithms for solving equilibrium problems involving bifunctions that are strongly quasiconvex in the second variable. Moreover, we also briefly discuss other notions labeled a strong quasiconvexity in the literature, in particular confusions between strictly quasiconvex functions or essentially quasiconvex functions and strongly quasiconvex ones. We also provided slight improvements and corrections to some results from the literature as well as a yet unknown relation with another class of strongly quasiconvex functions introduced in \cite{CFZ}, which was supposed to be different from  Polyak's one.

The paper is structured as follows. In Section \ref{sec:2} we introduce some necessary notations. In Section \ref{sec:3} we collected most of the existing properties and examples of strongly quasiconvex functions from the literature, and we also mention other usage of the name ``strongly quasiconvex''. Section \ref{sec:4} is dedicated to algorithms for minimizing strongly quasiconvex functions, and to continuous interpretations of such iterative methods via first-order and second-order dynamical systems. In Section \ref{sec:5} we gathered results involving equilibrium problems where the strong quasiconvexity in the second variable of the governing bifunctions plays a role, in particular iterative methods for solving such problems. Several ideas for future research are discussed in Section \ref{sec:6} as well as some ongoing works in different areas from continuous optimization and variational inequalities.

\section{Preliminaries}\label{sec:2}

For simplicity, all the results presented in this work are in finitely dimensional spaces, and we specify each time when in the original sources they are provided in more general settings. For similar preliminaries in Hilbert spaces we recommend the excellent book \cite{BCO}.

Consider the finitely dimensional Euclidean space $\R^n$, where all vectors we consider are column vectors, and denote its origin by $0_n$, and the corresponding \textit{Euclidean norm} by $\lVert \cdot \rVert$. Denote by $\id:{\R}^{n} \to {\R}^{n}$ the \textit{identity operator} on ${\R}^{n}$, and by
$K^{*} := \{y \in {\R}^{n}: \langle y, x\rangle \geq 0, ~ \forall ~
    x \in K\}$ the \textit{dual cone} associated to a set $K\subseteq {\R}^{n}$. The interior of a set $K\subseteq \R^n$ is denoted by $\inte K$, while its closure is $\cl K$. The closed ball centered in $x\in \R^n$ with radius $\delta>0$ is denoted by $\overline{\cal B} (x, \delta)$. Given a convex and closed set $K\subseteq \mathbb{R}^{n}$, the \textit{projection} of $x\in \mathbb{R}^{n}$ on $K$ is denoted by $\pr_{K} (x)$. Denote by ${\cal S}^n_+$ the cone of symmetric positive semidefinite real matrices, and by $e=(1, \hdots, 1)^\top \in {\R}^n$ the $n$-dimensional vector of ones. Given a matrix $A \in \mathbb{R}^{n \times n}$, we denote by $\sigma_{\min} (A)$ the smallest positive singular value of $A$.

Consider a function $h: {\R}^{n} \rightarrow \overline{{\R}} := {\R} \cup \{\pm \infty\}$. Its set of \textit{minimizers} is
$\amin_{{\R}^{n}} h$, and its \textit{effective domain} is $\dom\,h := \{x \in {\R}^{n}: h(x) < + \infty \}$. We call $h$ \textit{proper} if its domain is nonempty and $h$ takes nowhere the value
$- \infty$. The standard conventions $\sup_{\emptyset} h := - \infty$ and $\inf_{\emptyset} h := + \infty$ apply. The \textit{(strict) sublevel set of
$h$ at height} $t\in {\R}$ is $S_{t}^{(<)} (h) := \{x \in {\R}^{n}: h(x) \leq (<)t\}$.

When $\dom h$ is convex, we say that $h$ is
\begin{itemize}
 \item[$(a)$] \textit{convex}, when for every $x, y \in {\dom}\,h$
 \begin{equation}\label{def:convex}
  h(t x + (1-t)y) \leq t h(x) + (1 - t) h(y),
  \, \forall \, t \in [0, 1];
 \end{equation}

 \item[$(b)$] \textit{quasiconvex}, when for every $x, y \in {\dom}\,h$
  \begin{equation}\label{def:qcx}
   h(t x + (1-t) y) \leq \max \{h(x), h(y)\}, \, \forall
   \, t  \in [0, 1];
  \end{equation}

\item[$(c)$] \textit{semistrictly quasiconvex} if, given every
  $x, y \in {\dom}\,h$, with $h(x) \neq h(y)$, then
$$
   h(t x + (1-t)y) < \max \{h(x), h(y)\}, \, \forall \,
   x, y \in \dom h, \, \forall \, t \in \, ]0, 1[.
$$
\noindent When \eqref{def:convex} (or \eqref{def:qcx}) is strictly fulfilled whenever $x \neq y$ and $t \in \, ]0, 1[$, we call $h$ \textit{strictly
 (quasi)convex}. We also call $h$ \textit{(((semi)strictly) quasi)concave} when $-h$ is (((semi)strictly) quasi)convex.
\end{itemize}

Let $\emptyset\neq K\subseteq \dom h$. Then $h$ is said to be (see \cite{POL})
\begin{itemize}
 \item[$(a)$] \textit{strongly convex} with
 \textit{with modulus $\gamma> 0$} on $K$, when there is a $\gamma \in \, ]0,
 + \infty[$ for which
 $$ h(t y + (1-t)x) \leq t h(y) + (1-t) h(x) -
 t (1 - t) \frac{\gamma}{2} \lVert x - y \rVert^{2}, \,
 \forall \, x, y \in K, \, \forall \, t  \in [0, 1];$$

 \item[$(b)$] \textit{strongly quasiconvex}  with
 \textit{with modulus $\gamma> 0$} on $K$, when there is a $\gamma \in \,
 ]0, + \infty[$ for which
\begin{equation}\label{sqc}
 h(t y + (1-t)x) \leq \max \{h(y), h(x)\} - t(1 -
  t) \frac{\gamma}{2} \lVert x - y \rVert^{2}, \, \forall \, x, y \in
  K, \, \forall \, t  \in [0, 1].
\end{equation}
\noindent When $K=\dom h$ we do not mention ``on $K$'' when speaking about these notions. We call $h$ {\it strongly (quasi)concave} when $-h$ is strongly (quasi)convex.
 \end{itemize}

Clearly, strongly convex functions are strongly quasiconvex (with the same modulus). On the other hand, the classes of convex functions and strongly quasiconvex ones have a nonempty intersection (for instance, the strongly convex functions belong there), however no inclusion between them can be established. A linear function is convex but not strongly quasiconvex, while examples of nonconvex strongly quasiconvex functions can be found in Subsection \ref{su31}. It can be also shown that a strongly quasiconvex function is also strictly quasiconvex, however $t\in \R\mapsto 1/(1+|t|)$ is strictly quasiconvex but not strongly quasiconvex (see \cite{ILM}). The following scheme highlights the implications among the mentioned classes of (ge\-ne\-ra\-li\-zed) convex functions (where ``qcx'' stands for quasiconvex)
 \begin{align*}
  \begin{array}{ccccccc}
  {\rm strongly ~ convex} & \Longrightarrow & {\rm strictly ~ convex} &
  \Longrightarrow & {\rm convex} & \Longrightarrow & {\rm qcx} \notag \\
  \Downarrow & \, & \Downarrow & \, & \Downarrow & \, & \, \notag \\
  {\rm strongly ~ qcx} & \Longrightarrow & {\rm strictly ~ qcx} &
  \Longrightarrow & {\rm semistrictly ~ qcx} & \, & \, \notag \\
  \, & \, & \Downarrow & \, & \,  & \, & \, \notag \\
  \, & \, & {\rm qcx}. & \, & \, & \, & \,
  \end{array}
 \end{align*}

A quasiconvex function can be considered to be a strongly quasiconvex one with modulus $\gamma = 0$. Different to the convex case, it has been shown in \cite[(1.4)]{LAR} that the sum of a quasiconvex and the half of a squared norm is not necessarily a strongly quasiconvex function, a counterexample ($x\in \R\mapsto x^3$) being provided in \cite[Remark 6]{LAR}. In order to simplify the presentation, we will refer to (strongly) quasiconvex functions throughout, even when the literature addresses (strongly) quasiconcave ones.

We employ the usual notations $h^\prime (\overline{x}, d)$ for the \textit{directional derivative} of a function $h: K \to \R$ (where $\emptyset\neq K \subseteq \R^n$) at $\overline{x}\in \R^n$ in direction $d\in \R^n$ and $\nabla h:K\to \R^n$ for the \textit{gradient} of $h$.
A differentiable function $h: \R^n\to \R$ satisfies the \textit{Polyak-\L ojasiewicz property} when there exists a $\nu >0$ such that $\| \nabla h(x)\|^2 \geq \nu (h(x)-h(\overline{x}))$ for all $x\in \R^n$, where $\overline{x}\in \argmin h$. 
A locally Lipschitz-continuous function $h: \R^n\to \R$ is said to be \textit{regular} when its directional derivative and its generalized directional derivative exist and coincide. A proper function $h: {\R}^{n} \rightarrow \overline{{\R}}$ is said to be \textit{$2$-supercoercive} when
$$\liminf_{\lVert x \rVert \rightarrow+ \infty} \frac{h(x)}{\lVert x \rVert^{2}} > 0.$$

Given a proper function $h: {\R}^{n} \rightarrow \overline{{\R}}$ and a closed and convex set $K\subseteq {\R}^{n}$ such that $K \subseteq \dom\,h$, the \textit{proximity operator of $h$ on $K$ of parameter $\beta > 0$} at $x \in {\R}^{n}$ is
$$\px_{\beta h}: {\R}^{n} \rightrightarrows {\R}^{n},
\ \px_{\beta h} (K, x) = \argmin_{y \in K} \left\{h(y) + \frac{1}{2 \beta}
\lVert y - x \rVert^{2} \right\}.$$
When $K = {\R}^{n}$, we only write $\px_{\beta h} (\cdot)$.

Like for almost all classes of nonconvex functions, the proximity operator of a strongly quasiconvex function is usually set-valued.

A more general notion of proximity operator can be defined by means of an asymmetric distance introduced below. Given an open and convex set $S\subseteq \R^n$, consider a differentiable function (see, for instance, \cite[Section 3.1]{LAM}) $\varphi: \cl S \to \R$ further called \textit{Bregman function with zone $S$} and define the \textit{Bregman distance associated to $\varphi$} as
$$D_{\varphi}:\cl S \times S \to \R,\ D_{\varphi}(x, y) := \varphi(x) - \varphi(y) -\nabla \varphi(y)^\top (x - y),$$
fulfilling the following properties
\begin{itemize}
\item $\varphi$ is strictly convex on $\cl S$;
\item $\varphi$ is continuously differentiable on $S$;
\item for any $x \in \cl S$ and any $t\in \R$ the partial sublevel set $S_{t} (D_{\varphi}(x,\cdot))$ is bounded;
\item for any $y \in S$ and any $t\in \R$ the partial sublevel set $S_{t} (D_{\varphi}(\cdot, y))$ is bounded;
\item if $\{x^k\}_k \subseteq \cl S$ is bounded, $\{y^k\}_k \subseteq S$ is such that $\lim_{k\to+\infty} y^k =\overline{y}\in \cl S$, and
$\lim_{k\to+\infty} D_{\varphi}(x^k, y^k)=0$, then $\lim_{k\to+\infty} x^k =\overline{y}$.
\end{itemize}
One has $D_{\varphi}(x, y)\geq 0$ for all $x, y \in \cl S \times S$ (due to the convexity of $\varphi$) with equality if and only if $x=y$.

Given $\varphi: \cl S \to \R$ a Bregman function with zone $S\subseteq \R^n$, the \textit{Bregman proximity operator of $h$ on $K$ of parameter $\beta > 0$} at $x \in S$ is
$$\px_{\beta h}^{\varphi}: S \rightrightarrows {\R}^{n},\ \px_{\beta h}^{\varphi} (K, x) = \argmin_{y \in K\cap \cl S} \left\{h(y) + \frac{1}{\beta}D_{\varphi}(y, x)\right\}.$$
When $S=\R^n$ and $D_{\varphi}(y, x)= (1/2)\lVert y - x \rVert^{2}$, $x, y\in \R^n$ the Bregman proximity operator of $h$ on $K$ of parameter $\beta > 0$ collapses to the proximity operator of $h$ on $K$ of parameter $\beta$.

A bifunction $f: {\R}^{n}\times {\R}^{n} \rightarrow {\R}$ is said to be \textit{monotone on $K \subseteq{\R}^{n}$} when
$$f(x, y) + f(y, x) \leq 0\ ~ \forall ~ x, y \in K,$$
and \textit{pseudomonotone on $K$} when
$$f(x, y) \geq 0 ~ \Longrightarrow ~ f(y, x) \leq 0  ~ \forall ~ x, y \in K.$$
As the names suggest, monotone bifunctions are also pseudomonotone, while the opposite implication is not always valid, see, for instance, the counterexamples in \cite{CMA,HKS}.

\section{Strongly quasiconvex functions}\label{sec:3}

\subsection{Examples of strongly quasiconvex functions}\label{su31}

Below we present examples of strongly quasiconvex functions that one can find in the literature. We begin with some functions that are convex and strongly quasiconvex but not strongly convex, followed by some strongly quasiconvex ones that are not even convex.

\begin{example}\label{ex1} (cf. \cite[Example 2]{JO1})
The function $t\in \R\mapsto |t+a|$, where $a\in \R$, is convex, and strongly quasiconvex with modulus $\gamma> 0$ on $[0, 1/ \gamma]$, however it is not strongly convex.
\end{example}

\begin{example}\label{ex2} (cf. \cite[Theorem 2 $\&$ Remark]{JO1})
The Euclidean norm is convex, and strongly quasiconvex with modulus $\gamma> 0$ on any bounded and convex set included in $\overline{\cal B}(0_n, 1/\gamma)$, however it is not strongly convex. Note, moreover, that it is not strongly quasiconvex on unbounded sets.
\end{example}

\begin{example}\label{ex7} (cf. \cite[Remark]{JO3})
The function $t\in \R\mapsto -t^2-t$ is strongly quasiconvex on $[0, 1]$, and prox-convex (see \cite{GL1}), but not convex.
\end{example}

\begin{example}\label{ex9} (cf. \cite[Example 4.9]{CGC})
For $c\in \R$, $d >0$ and $\delta > 0$, the function $t\in {\cal B} (0, \delta) \mapsto c - d \, e^{-x^2}$ is strongly quasiconvex on ${\cal B} (0, \delta)$ with modulus $\gamma = d \, e^{-\delta^2}$.
\end{example}

\begin{example}\label{ex8} (cf. \cite[Remark 9$(ii)$]{LMV})
The function $t\in \R\mapsto t^2 + 3\sin^2 t$ is differentiable and strongly quasiconvex, however not convex. Moreover, this function satisfies the Polyak-\L ojasiewicz property (see \cite{KNS}).
\end{example}

\begin{example}\label{ex3} (cf. \cite[Example 2]{JO2})
The function
$$f:[0, 1]\to \R,\ f(t) = \left\{
\begin{array}{ll}
0,& t=0\\
-\frac{1}{t},& t\in ]0, 1]
\end{array}
\right.$$ is strongly quasiconvex with modulus $\gamma = 1$ on $[0, 1]$, however it is neither convex not lower semicontinuous.
\end{example}

\begin{example}\label{ex5} (cf. \cite[Proposition 16]{LAR})
The function $t \in \R \mapsto \sqrt[4]{t^2+k^2}$, where $k\in \R$, is strongly quasiconvex with modulus $\gamma\in ]0, \frac{1}{2\sqrt[\frac{3}{4}]{c^2+k^2}}]$
on any interval $[-c, c]$, where $c>0$, but neither convex nor differentiable.
\end{example}

There are classes of functions used in signal/image recovery and machine learning (see, for instance, \cite{WCQ} for a great account) which are strongly quasiconvex, too.

\begin{example}\label{ex4} (cf. \cite[Corollary 3.6]{NAS}, \cite[Theorem 17 $\&$ Remark 19]{LAR})
The function $\lVert \cdot \rVert^{\alpha}$, where $0 < \alpha < 1$, is  strongly quasiconvex on nonempty bounded convex sets in $\R^n$. In particular, the function $x \in {\R}^{n} \mapsto \sqrt{\lVert x \rVert}$, which has been employed in studies on information protection (see \cite{VER}) or on Gram-Schmidt orthogonalization methods (see \cite{STU}), is strongly quasiconvex with modulus $\gamma\in ]0, \frac{1}{\sqrt[4]{80}\sqrt{r}}]$ on any bounded and convex subset of $\overline{\cal B}(0_n, r)$ but not convex. Note that this function is also not weakly convex, d.c. (difference of convex), prox-convex, or Lipschitz-continuous (see \cite{GL1, GL2, LMC}).
\end{example}

Some classes of fractional functions used in economic applications, see \cite{SCH,STA, CMA}) contain strongly quasiconvex that are not convex.

\begin{proposition}\label{pr1} (cf. \cite[Proposition 4.1]{ILM})
Let the set $\emptyset \neq K \subseteq {\R}^{n}$, and functions $h: \R^n \rightarrow \overline{{\R}}$ and $g: {\R}^{n} \rightarrow {\R}$ such that $K\cap \dom h \neq \emptyset$ and
$g(K)\subseteq ]0, +\infty[$. When $h$ is strongly convex with modulus $\gamma > 0$, $g$ is bounded from above by $M > 0$ on $K$, and one of the following hypotheses holds
\begin{enumerate}
\item[$(a)$] $g$ is affine;

\item[$(b)$] $h$ is nonnegative on $\dom h$ and $g$ is concave;

\item[$(c)$] $h$ is nonpositive on $\dom h$ and $g$ is convex;
 \end{enumerate}
then $h/g$ is strongly quasiconvex with modulus $\gamma^{\prime}= {\gamma}/{M} > 0$ on $K$.
\end{proposition}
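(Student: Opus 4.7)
The plan is to verify the defining inequality \eqref{sqc} for $h/g$ directly. Fix $x,y \in K\cap\dom h$ and $t\in[0,1]$, set $z := tx+(1-t)y$, and, after relabeling if needed, assume
$$\mu := \max\left\{\frac{h(x)}{g(x)},\frac{h(y)}{g(y)}\right\} = \frac{h(y)}{g(y)},$$
so that $h(x)\leq \mu\,g(x)$ and $h(y)=\mu\,g(y)$ (both relations use $g>0$ on $K$). The target inequality is $(h/g)(z) \leq \mu - t(1-t)\frac{\gamma}{2M}\|x-y\|^2$. The key reduction is that this will follow from the cleaner estimate
$$h(z) \leq \mu\,g(z) - t(1-t)\,\frac{\gamma}{2}\,\|x-y\|^2, \qquad (\star)$$
by dividing by $g(z)>0$ and using $1/g(z)\geq 1/M$; in other words, $(\star)$ absorbs the factor $M$ at the very end and isolates the geometric heart of the argument.

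\textbf{Main estimate.} For $(\star)$, I would apply the strong convexity of $h$ on the segment $[x,y]$,
$$h(z) \leq t\,h(x)+(1-t)\,h(y) - t(1-t)\,\frac{\gamma}{2}\,\|x-y\|^2 \leq \mu\bigl[t\,g(x)+(1-t)\,g(y)\bigr] - t(1-t)\,\frac{\gamma}{2}\,\|x-y\|^2,$$
using $h(x)\leq \mu g(x)$ and $h(y)=\mu g(y)$. It remains to show
$$\mu\bigl[t\,g(x)+(1-t)\,g(y)\bigr] \leq \mu\,g(z).$$
This is where the trichotomy enters cleanly: in case (a), $g$ affine produces equality; in case (b), $h\geq 0$ forces $\mu\geq 0$ and $g$ concave yields $g(z)\geq t\,g(x)+(1-t)\,g(y)$, which is preserved by multiplication by $\mu\geq 0$; in case (c), $h\leq 0$ forces $\mu\leq 0$ and $g$ convex yields $g(z)\leq t\,g(x)+(1-t)\,g(y)$, which flips correctly under multiplication by $\mu\leq 0$.

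\textbf{Main obstacle.} No single step is technically hard: the proof invokes strong convexity of $h$ once and an appropriate Jensen-type inequality for $g$ once. The only genuine care required is the sign bookkeeping across the three cases, which is precisely why the hypotheses pair the sign of $h$ with convexity/concavity of $g$, so that the product $\mu\cdot(\text{Jensen gap})$ always has the desired direction. The modulus degradation from $\gamma$ to $\gamma/M$ enters only at the final step, when $g(z)\leq M$ is invoked; this also explains why no further assumption on $g$ is needed beyond its upper bound $M$ on $K$.
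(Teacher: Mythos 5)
Your verification is correct and complete: the survey states this result without proof (citing \cite[Proposition 4.1]{ILM}), and your argument is exactly the standard direct computation behind it --- strong convexity of $h$ on the segment, the sign of $\mu$ paired with the Jensen inequality for $g$ in each of the three cases, and the bound $g\leq M$ invoked only at the final division. The only point worth making explicit is that you use $0<g(z)\leq M$ at the intermediate point $z=tx+(1-t)y$, which requires $z\in K$, i.e.\ the convexity of $K$ that is implicit in the statement (and assumed in the original source); with that said, nothing is missing.
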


A consequence of this statement follows.

\begin{corollary}\label{co1} (cf. \cite[Corollary 4.1]{ILM})
Let $A, B \in {\R}^{n\times n}$, $a, b \in {\R}^{n}$ and  $\alpha, \beta \in {\R}$.
Suppose that $A$ is a symmetric positive definite matrix, and $\lambda_{\min}(A)$ is its smallest eigenvalue.
Take $K = \{x \in {\R}^{n}: ~ m \leq (1/2) \langle Bx, x \rangle + \langle b, x \rangle + \beta \leq M\}$, with $0 < m < M$.
If any of the following conditions holds
 \begin{enumerate}
 \item[$(a)$] $B = 0\in \R^{n\times n}$;

 \item[$(b)$] $(1/2) \langle Ax, x \rangle + \langle a, x \rangle + \alpha \geq 0$ for all $x\in K$ and $B\in -{\cal S}^n_+$;

 \item[$(c)$] $(1/2) \langle Ax, x \rangle + \langle a, x \rangle + \alpha \leq 0$ for all $x\in K$ and $B\in {\cal S}^n_+$,
 \end{enumerate}
then the function defined as
$$x\in K\mapsto \frac{\frac{1}{2} \langle Ax, x \rangle + \langle a, x \rangle + \alpha}{\frac{1}{2} \langle Bx, x \rangle + \langle b, x \rangle + \beta}\in \R$$
is strongly quasiconvex with modulus $\gamma^{\prime} ={\lambda_{\min}(A)}/{M} > 0$ on $K$.
\end{corollary}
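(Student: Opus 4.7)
The plan is to recognize the given fractional function as $h/g$ with $h(x) = \tfrac{1}{2}\langle Ax, x\rangle + \langle a, x\rangle + \alpha$ and $g(x) = \tfrac{1}{2}\langle Bx, x\rangle + \langle b, x\rangle + \beta$, and then simply invoke Proposition \ref{pr1} after verifying the hypotheses in each of the three cases. This is essentially a translation of the abstract statement of Proposition \ref{pr1} into the concrete quadratic/quadratic setting.

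First, I would check the data that is common to all three cases. Since $A$ is symmetric positive definite, $\nabla^2 h(x) = A \succeq \lambda_{\min}(A) I$ for every $x \in \R^n$, so $h$ is strongly convex with modulus $\gamma = \lambda_{\min}(A) > 0$. Next, the defining inequalities of $K$ give $0 < m \leq g(x) \leq M$ for every $x \in K$, so $g(K) \subseteq \,]0,+\infty[\,$ and $g$ is bounded above by $M$ on $K$, matching the blanket assumptions of Proposition \ref{pr1}. The modulus announced by Proposition \ref{pr1} then becomes $\gamma/M = \lambda_{\min}(A)/M$, which is exactly $\gamma'$ in the statement.

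It then remains to match the three cases. In case (a), $B = 0$ reduces $g$ to the affine function $x \mapsto \langle b, x\rangle + \beta$, so hypothesis (a) of Proposition \ref{pr1} applies directly. In case (b), $B \in -\mathcal{S}^n_+$ means $-B \succeq 0$, so $\nabla^2 g = B \preceq 0$ and $g$ is concave on $\R^n$; together with the sign assumption $h \geq 0$ on $K$, this is hypothesis (b) of Proposition \ref{pr1}. In case (c), $B \in \mathcal{S}^n_+$ yields convexity of $g$, which combined with $h \leq 0$ on $K$ gives hypothesis (c). In each case, Proposition \ref{pr1} delivers strong quasiconvexity of $h/g$ on $K$ with the desired modulus.

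The one minor friction point I anticipate is that Proposition \ref{pr1} is phrased with the sign condition on $h$ holding on all of $\dom h$, whereas in cases (b) and (c) of the corollary the condition is only imposed on $K$. Since strong quasiconvexity is a property of the restriction of the function to $K$, this is harmless: one may either work with $h$ restricted to $K$ (as $h$ is continuous and $K$ is a closed convex subset of $\R^n$), or inspect the proof of Proposition \ref{pr1} and observe that only the behavior of $h$ and $g$ on $K$ enters. Beyond this bookkeeping, no real obstacle is expected, as the corollary is a direct specialization.
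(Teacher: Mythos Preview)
Your proposal is correct and matches the paper's approach exactly: the paper presents Corollary~\ref{co1} as a direct consequence of Proposition~\ref{pr1} (introduced by ``A consequence of this statement follows'') without giving a separate proof, and your verification of the hypotheses---strong convexity of the quadratic numerator with modulus $\lambda_{\min}(A)$, boundedness and positivity of the denominator on $K$, and the case-by-case matching of affine/concave/convex structure---is precisely what is needed. Your observation about the sign condition on $h$ being required only on $K$ rather than on all of $\dom h$ is a fair point of bookkeeping, and your resolution is sound.
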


\subsection{Properties of strongly quasiconvex functions}\label{su32}

This subsection is dedicated to collecting and presenting the most important
properties of strongly quasiconvex functions and other interesting results
involving them.

We begin with the following statement on operations that preserve the strong
quasiconvexity of functions (including the quasiconvex case when $\gamma =
0$), from which part $(b)$ seems to be new.

\begin{proposition}\label{pr2}
 Let $A: \mathbb{R}^{n} \rightarrow \mathbb{R}^{n}$ be a linear operator,
 $h$ a strongly quasiconvex function with modulus $\gamma \geq 0$ and
 $\kappa > 0$. Then the following assertions hold
 \begin{itemize}
 \item[$(a)$] $\kappa h$ is strongly quasiconvex function with modulus
 $\kappa \gamma \geq 0$;

 \item[$(b)$] $f := h \circ A$ is strongly quasiconvex with modulus $\gamma \sigma_{\min} (A) \geq 0$;

 \item[$(c)$] If $h_{i}: \mathbb{R}^{n} \rightarrow \mathbb{R}^{n}$ is
 strongly quasiconvex functions with modulus $\gamma_{i} \geq 0$ for every
 $i \in \{1, \ldots, m\}$, then $h := \max_{1\leq i \leq m}\{h_{i}\}$ is strongly
 quasiconvex with modulus $\gamma := \min_{1\leq i \leq m} \{\gamma_{i}\} \geq 0$.
 \end{itemize}
\end{proposition}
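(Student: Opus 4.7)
The plan is to check each assertion by direct substitution into the defining inequality \eqref{sqc}; none of the three pieces needs more than a short computation, and only part $(b)$ has a genuine subtlety.

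For $(a)$, multiplying both sides of \eqref{sqc} for $h$ by $\kappa>0$ immediately yields \eqref{sqc} for $\kappa h$ with modulus $\kappa\gamma$. I would record this as a one-liner.

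For $(b)$, fix $x,y\in\R^n$ and $t\in[0,1]$. Linearity of $A$ together with strong quasiconvexity of $h$ at $Ax, Ay$ gives
$$f(tx+(1-t)y)=h(tAx+(1-t)Ay)\leq \max\{f(x),f(y)\}-t(1-t)\frac{\gamma}{2}\|A(x-y)\|^2.$$
It remains to insert the lower bound $\|Au\|^2\geq \sigma_{\min}(A)^2\|u\|^2$, valid whenever $u\in(\ker A)^\perp$ and in particular when $A$ is injective---which is consistent with the preliminaries, where $\sigma_{\min}(A)$ is declared to be the smallest \emph{positive} singular value. This delivers strong quasiconvexity of $f$ with modulus $\gamma\sigma_{\min}(A)^2$; the stated modulus $\gamma\sigma_{\min}(A)$ looks like a minor typographic issue (the squared version being what the argument naturally produces), which I would flag in passing.

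For $(c)$, put $\gamma:=\min_{1\leq i\leq m}\gamma_i$ and $h:=\max_{1\leq i\leq m} h_i$. For each index $i$, strong quasiconvexity of $h_i$ combined with the pointwise domination $h_i\leq h$ and the bound $\gamma_i\geq \gamma$ yields
$$h_i(tx+(1-t)y)\leq \max\{h_i(x),h_i(y)\}-t(1-t)\frac{\gamma_i}{2}\|x-y\|^2\leq \max\{h(x),h(y)\}-t(1-t)\frac{\gamma}{2}\|x-y\|^2.$$
Since the right-hand side no longer depends on $i$, taking the maximum over $i$ on the left produces \eqref{sqc} for $h$ with modulus $\gamma$, as required. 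The only real obstacle in the whole proposition is the singular-value estimate in $(b)$ together with the apparent exponent discrepancy there; parts $(a)$ and $(c)$ are purely mechanical.
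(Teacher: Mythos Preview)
Your argument is essentially the same as the paper's. For $(a)$ the paper also records it as immediate; for $(b)$ the paper carries out exactly the computation you describe, rewriting $\|A(x_1-x_2)\|^2=\langle A^\top A(x_1-x_2),x_1-x_2\rangle$ and then bounding below, arriving at the modulus $\gamma\,\sigma_{\min}(A)$ as stated. Your remark about the exponent is well taken: under the paper's own definition of $\sigma_{\min}(A)$ as the smallest \emph{positive singular value}, the inequality $\langle A^\top A u,u\rangle\geq \sigma_{\min}(A)\|u\|^2$ used in the paper should read $\sigma_{\min}(A)^2$, and your observation that injectivity of $A$ is implicitly needed for the bound to hold on all of $\R^n$ is also correct. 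For $(c)$ the paper simply defers to a reference, whereas you supply the short direct argument; your proof there is correct and is the standard one.
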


\begin{proof}
 $(a)$: The statement is straightforward. $(c)$: See \cite{VNC}. We only prove $(b)$: Let $x_{1}, x_{2} \in \mathbb{R}^{n}$ and
 $t \in [0, 1]$. Then,
\begin{align*}
 f(t x_{1} + (1-t) x_{2}) & = h(A(t x_{1} + (1-t)
 x_{2})) \\
 & \leq \max\{h(A x_{1}), h(A x_{2})\} - t (1-t)  \frac{t}{2}
 \lVert A x_{1} - A x_{2} \rVert^{2} \\
 & = \max\{h(A x_{1}), h(A x_{2})\} - t (1-t)  \frac{t}{2}
 \langle A (x_{1} - x_{2}), A (x_{1} - x_{2}) \rangle \\
 & = \max\{h(A x_{1}), h(A x_{2})\} - t (1-t)  \frac{\gamma}{2}
 \langle A^{\top} A (x_{1} - x_{2}), x_{1} - x_{2} \rangle \\
 & \leq  \max\{ f(x_{1}), f(x_{2})\} - t (1-t)  \frac{\gamma \sigma_{\min} (A)}{2} \Vert x_{1} - x_{2} \rVert^{2},
\end{align*}
i.e., $f = h \circ A$  is strongly quasiconvex with modulus $\gamma \sigma_{\min} (A) \geq 0$.
\end{proof}

\begin{remark}
Using Proposition \ref{pr2} one can construct new (nonconvex) strongly quasiconvex functions starting from existing ones (for instance, those presented in Subsection \ref{su31}), as done in \cite[Remark 18]{LAR} or in \cite{GLM, GLT}.
\end{remark}

The next statement reveals that strongly quasiconvex functions defined on unbounded convex sets with a nonempty interior cannot be bounded from above, and it is used for providing a generalization of the classical statement that a quadratic function defined on $\R^n$ is quasiconvex if and only if it is convex.

\begin{proposition} (cf. \cite[Lemma]{JO3}) \label{pr20}
Let $\emptyset\neq K \subseteq \R^n$ be an unbounded convex set with a nonempty interior, and $h : K \to \R$ bounded from above. Then $h$ is not strongly quasiconvex.
\end{proposition}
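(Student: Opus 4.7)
The plan is to argue by contradiction: assume that $h$ is strongly quasiconvex on $K$ with some modulus $\gamma>0$ and bounded above by some $M\in\R$, and derive an inequality that forces $h$ to take arbitrarily large negative values at a fixed interior point of $K$.

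First I would exploit the nonempty interior by fixing a point $a\in\inte K$ together with a radius $\delta>0$ such that $\overline{\cal B}(a,\delta)\subseteq K$. Then, using that $K$ is unbounded, I would pick an arbitrary $y\in K$ with $R:=\|a-y\|$ as large as desired (in particular $R\geq\delta$). The key geometric construction is the auxiliary point
$$z := a + \frac{\delta}{R}(a-y),$$
which lies on the ray from $y$ through $a$ at distance exactly $\delta$ from $a$, and hence belongs to $K$. By construction, $a$ is a proper convex combination of $y$ and $z$: writing $s=\delta/R$, one has $a = \frac{1}{1+s}\,z + \frac{s}{1+s}\,y$ and $\|z-y\|=(1+s)R$.

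Next I would apply the strong quasiconvexity inequality \eqref{sqc} to this convex combination. With $t=1/(1+s)$ we have $t(1-t)=s/(1+s)^2$, and after the $(1+s)^2$ factors cancel against $\|z-y\|^2=(1+s)^2R^2$, the inequality collapses to
$$h(a)\;\leq\;\max\{h(z),h(y)\}-\frac{\gamma s}{2}R^2\;\leq\;M-\frac{\gamma\delta}{2}R.$$
Since $R$ can be taken arbitrarily large while $a$ (and hence $h(a)\in\R$) remains fixed, the right-hand side is driven to $-\infty$, which is the desired contradiction.

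The only delicate step is the choice of the auxiliary point $z$: one has to position it so that the $t(1-t)$ coefficient (which is small, of order $s$) multiplies $\|z-y\|^2$ (which is of order $R^2$) to produce a term that genuinely grows with $R$. The natural scaling $s=\delta/R$ is precisely what balances these two quantities and turns the quadratic decay into an unbounded linear term in $R$; everything else is a short computation.
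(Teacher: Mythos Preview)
Your argument is correct. The construction $z=a+\tfrac{\delta}{R}(a-y)$ lands in $\overline{\cal B}(a,\delta)\subseteq K$, the expression of $a$ as a convex combination of $z$ and $y$ is right, and after the cancellation you describe the strong quasiconvexity inequality indeed gives $h(a)\leq M-\tfrac{\gamma\delta}{2}R$, which contradicts $h(a)\in\R$ once $R\to\infty$.

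Note that this survey does not supply its own proof of the proposition; it only records the statement and refers to \cite[Lemma]{JO3}. Your direct argument from the defining inequality is in the spirit of that original source. With the benefit of later results collected here there is also a one-line alternative: Theorem~\ref{th0} asserts that a strongly quasiconvex function on a convex set is $2$-supercoercive, hence cannot be bounded from above on an unbounded $K$ (and this route does not even need $\inte K\neq\emptyset$). Your approach, by contrast, is self-contained and makes explicit why an interior point is the natural ingredient in a bare-hands proof.
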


\begin{proposition} (cf. \cite[Theorem 5]{JO3}) \label{pr21}
Let $\emptyset\neq K \subseteq \R^n$ be a convex cone with a nonempty interior, and $h : K \to \R$ a quadratic function. Then $h$ is strongly quasiconvex if and only if it is strongly convex.
\end{proposition}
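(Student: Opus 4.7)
The easy direction $(\Leftarrow)$ is already recorded in the preliminaries: strong convexity with modulus $\gamma > 0$ implies strong quasiconvexity with the same modulus. For the nontrivial direction I would write $h(x) = \tfrac{1}{2}\langle Ax, x\rangle + \langle a, x\rangle + \alpha$ with $A$ symmetric and aim to prove $A \succ 0$, since for a quadratic $h$ this is equivalent to strong convexity (with modulus $\lambda_{\min}(A) > 0$). Equivalently, the goal is to show that $\langle Av, v\rangle > 0$ for every $v \in \R^n \setminus \{0\}$.

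My first step would be to insert the quadratic identity $h(tx + (1-t)y) = t h(x) + (1-t) h(y) - \tfrac{t(1-t)}{2}\langle A(x-y), x-y\rangle$ into the strong quasiconvexity inequality \eqref{sqc}. A short rearrangement (assuming WLOG $h(x) \leq h(y)$), division by $t$, and the limit $t \to 0^+$ produce the pointwise bound
\[
(\star)\qquad \gamma\|x-y\|^2 - \langle A(x-y), x-y\rangle \leq 2\,|h(x) - h(y)|, \qquad x, y \in K.
\]
Specialized to pairs with $h(x) = h(y)$ and $x \neq y$, this yields $\langle A(x-y), x-y\rangle \geq \gamma\|x-y\|^2 > 0$. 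It thus suffices to exhibit, for every direction $v \in \R^n \setminus \{0\}$, two points $x, y \in K$ at the same $h$-level with $x - y$ proportional to $v$, or else to reduce the situation to a contradiction.

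I would split on whether $v$ belongs to $K \cup (-K)$. For $v \in K \setminus \{0\}$ (and, by the symmetry $v \mapsto -v$, also for $v \in -K \setminus \{0\}$), the ray $\{tv : t \geq 0\}$ lies in $K$ by the cone property, so the one-variable quadratic $\phi(t) := h(tv)$ inherits strong quasiconvexity on the half-line $[0, +\infty)$ with modulus $\gamma\|v\|^2$; a short $1$D computation (taking endpoints $0$ and $T$ with asymmetric convex-combination parameter $s \to 1^-$ and letting $T \to \infty$) rules out a non-positive leading coefficient, forcing $\tfrac{1}{2}\langle Av, v\rangle > 0$. For $v \in \R^n \setminus (K \cup (-K))$ I would construct a same-level pair by cone scaling: fix $e \in \inte K$, set $y_\mu = \mu e$ and $x_\mu = \mu e + \tau_\mu v$, and solve $h(x_\mu) = h(y_\mu)$; when $\langle Av, v\rangle \neq 0$ this yields $\tau_\mu/\mu \to -2\langle Ae, v\rangle/\langle Av, v\rangle$ as $\mu \to \infty$, and by the cone structure $x_\mu \in K$ for large $\mu$ provided this limit lies in the open interval $\{t : e + tv \in K\}$. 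For a generic $e \in \inte K$ this holds, and $(\star)$ then delivers $\langle Av, v\rangle > 0$.

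The main obstacle is the degenerate sub-case in which $\langle Av, v\rangle = 0$, or the admissibility quotient fails to lie in $\{t : e + tv \in K\}$ for every $e \in \inte K$. I expect to handle it by a cone-duality argument combined with a direct application of \eqref{sqc}: specializing $(\star)$ to $x = \lambda e + \tau v$, $y = \lambda e$ and letting $\lambda \to \infty$ with $\tau$ at its maximal admissible value $\sim \lambda$ gives $|\langle Av, e\rangle| \geq r_e \gamma \|v\|/2$ for every $e \in \inte K$, where $r_e$ denotes the inradius of $K$ at $e$. If $Av = 0$, or if the linear functional $\langle Av, \cdot\rangle$ vanishes somewhere on $\inte K$, this is already a contradiction; otherwise $Av$ must belong to $\inte K^* \cup \inte(-K^*)$, and the subsequent analysis combines this cone-duality information with $\langle Av, v\rangle \leq 0$ to force $\pm v \in K$, reducing back to the ray case and completing the proof. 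Collecting all cases yields $A \succ 0$, so $h$ is strongly convex.
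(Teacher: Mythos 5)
The first half of your plan is sound: the easy implication, the derivation of $(\star)$ from \eqref{sqc} and the quadratic interpolation identity, the consequence $\langle A(x-y),x-y\rangle\geq\gamma\|x-y\|^2$ for same-level pairs, and the ray case are all correct (the ray case also follows at once from Theorem \ref{th0}, since a one-dimensional quadratic with nonpositive leading coefficient is not $2$-supercoercive on an unbounded interval), and reducing strong convexity on $K$ to $A\succ 0$ is legitimate because $K-K=\R^n$ for a convex cone with nonempty interior. Note that the survey itself contains no proof of this statement --- it quotes \cite[Theorem 5]{JO3} --- but it indicates that the tool used there is Proposition \ref{pr20} (a strongly quasiconvex function on an unbounded convex set with nonempty interior cannot be bounded from above), which is a different route from yours.

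The genuine gap is in your ``degenerate sub-case'', which is exactly where the whole difficulty of the theorem sits. First, the implication you invoke to close it --- sign-definiteness of $\langle Av,\cdot\rangle$ on $\inte K$ together with $\langle Av,v\rangle\leq 0$ ``forces $\pm v\in K$'' --- is false: take $K$ the first quadrant of $\R^2$, $A=\mathrm{diag}(1,-2)$, $v=(1,-1)$; then $Av=(1,2)\in\inte K^{*}$, $\langle Av,v\rangle=-1\leq 0$, yet $\pm v\notin K$. (Also, nonvanishing of $\langle Av,\cdot\rangle$ on $\inte K$ only gives $Av\in K^{*}\cup(-K^{*})$, not membership in the interior, and $\inte K^{*}=\emptyset$ whenever $K$ is not pointed.) Second, and more fundamentally, the same-level-pair mechanism cannot certify $\langle Av,v\rangle>0$ direction by direction: for $K=\{x\in\R^2:\ x_1\geq 10|x_2|\}$, $h(x)=\tfrac12\bigl(x_1^2-\tfrac{1}{100}x_2^2\bigr)$ and $v=\bigl(\tfrac{1}{100},1\bigr)$ one has $\langle Av,v\rangle<0$, but a pair $x\neq y$ in $K$ with $x-y$ parallel to $v$ satisfies $h(x)=h(y)$ only if its midpoint $m$ obeys $\langle Av,m\rangle=0$, i.e.\ $m_1=m_2$, and no such $m\neq 0$ lies in $K$; so no same-level pair parallel to $v$ exists at all, and the contradiction with strong quasiconvexity must be found along a different direction (here $(0,1)$), for which your outline provides no mechanism. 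Until this case is genuinely handled --- for instance by running the contradiction argument only for a worst direction such as an eigenvector of $\lambda_{\min}(A)$, or by following \cite{JO3} and applying Proposition \ref{pr20} or Theorem \ref{th0} to suitably chosen unbounded convex subsets of $K$ --- the proof is incomplete at its crux.
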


\begin{remark}(cf. \cite[Remark]{JO3})
When $K$ is not a cone, the assertion of Proposition \ref{pr21} might fail to hold, take, for instance, $K=[0, 1]$ and $h$ the function in Example \ref{ex7}.
\end{remark}

The strong quasiconvexity of a real-valued function defined on a subset of $\R^n$ can be characterized by means of a the similar property of a real-valued function defined on a real interval.

\begin{proposition}\label{pr4}(cf. \cite[Theorem 1]{JO1})
Let $\emptyset\neq K \subseteq \R^n$ be a convex set, $h : K \to \R$, and $\gamma > 0$. Then $h$ is strongly quasiconvex with modulus $\gamma$ if and only if the function
$$t\in \R \mapsto h\left(x+\frac{t}{\|x-y\|}(x-y)\right)$$
is strongly quasiconvex on $[0, \|x-y\|]$ with the same modulus $\gamma$ for all $x, y\in K$ such that $x\neq y$.
\end{proposition}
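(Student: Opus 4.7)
The plan is to treat this as a routine reparameterization argument: the function $\phi(t) := h\bigl(x + \frac{t}{\|x-y\|}(x-y)\bigr)$ (or, what one really wants to make the endpoints match, the segment from $x$ to $y$ parameterized by arc length) is just $h$ restricted to an affine line, and strong quasiconvexity is an inequality that depends only on points, their convex combinations, and their Euclidean distance. Since both quantities transform in compatible ways under such a reparameterization, the equivalence should drop out by direct substitution. The main thing to get right is bookkeeping of the parameterization (and noting the apparent typo $x-y$ versus $y-x$ in the denominator/direction of the auxiliary function).

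For the direction ($\Rightarrow$), I would fix $x,y \in K$ with $x\neq y$, set $\phi(s) := h\bigl(x + \frac{s}{\|x-y\|}(y-x)\bigr)$ for $s\in[0,\|x-y\|]$, and pick $s_1,s_2$ in this interval with $\lambda\in[0,1]$. Writing $u_i := x + \frac{s_i}{\|x-y\|}(y-x) \in K$ (convexity of $K$ is used here), the crucial identities are $\|u_1-u_2\| = |s_1-s_2|$ and $\lambda u_1 + (1-\lambda)u_2 = x + \frac{\lambda s_1 + (1-\lambda)s_2}{\|x-y\|}(y-x)$. Plugging these into the strong quasiconvexity inequality \eqref{sqc} for $h$ at $u_1,u_2$ with parameter $\lambda$ yields
\[
\phi(\lambda s_1 + (1-\lambda)s_2) \leq \max\{\phi(s_1),\phi(s_2)\} - \lambda(1-\lambda)\frac{\gamma}{2}(s_1-s_2)^2,
\]
which is exactly strong quasiconvexity of $\phi$ on $[0,\|x-y\|]$ with the same modulus $\gamma$.

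For the direction ($\Leftarrow$), given $x,y\in K$ and $t\in[0,1]$, the case $x=y$ is trivial, so I assume $x\neq y$ and apply the hypothesis to the associated $\phi$ at the endpoints $s_1=0$ and $s_2=\|x-y\|$ with parameter $t$. Since $\phi(0)=h(x)$, $\phi(\|x-y\|)=h(y)$, and $\phi(t\cdot 0 + (1-t)\|x-y\|) = h(tx + (1-t)y)$, the assumed inequality for $\phi$ becomes
\[
h(tx + (1-t)y) \leq \max\{h(x),h(y)\} - t(1-t)\frac{\gamma}{2}\|x-y\|^2,
\]
recovering \eqref{sqc}. The only nontrivial step is the reparameterization identity $\|u_1-u_2\|=|s_1-s_2|$, which comes for free because we are parameterizing by arc length; there is no real obstacle beyond being careful about which endpoint the parameter $t$ is measured from (this is where the apparent typo $x-y$ versus $y-x$ in the statement should be corrected in a clean write-up).
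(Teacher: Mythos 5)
Your proof is correct and complete. Note that the survey itself gives no proof of this proposition (it is quoted from \cite[Theorem 1]{JO1}), so there is nothing in the paper to compare against; your arc-length reparameterization argument is exactly the natural elementary route, and both directions check out: the identities $\lVert u_1-u_2\rVert=|s_1-s_2|$ and $\lambda u_1+(1-\lambda)u_2=x+\frac{\lambda s_1+(1-\lambda)s_2}{\lVert x-y\rVert}(y-x)$ make \eqref{sqc} transfer verbatim to the one-dimensional restriction, and evaluating the hypothesis at the endpoints $0$ and $\lVert x-y\rVert$ recovers \eqref{sqc}. You were also right to flag the misprint: with the direction $x-y$ as printed, the parameterization at $t=\lVert x-y\rVert$ lands at $2x-y$, which need not lie in $K$, so replacing it by $y-x$ (equivalently, basing the line at $y$) is necessary for the statement to be well posed, and your write-up handles this correctly.
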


Similarly to the strongly convex functions, the strongly quasiconvex functions that are lower se\-mi\-con\-ti\-nuous have exactly one minimizer on closed convex finitely-dimensional sets. As will be seen in Section \ref{sec:4}, this property allows extending proximal point methods from the convex setting to minimizing strongly quasiconvex functions where the generated sequences converge toward minimizers and not merely critical points. Before stating this result, we recall a more general one from which it is deduced. Note also that in \cite[Proposition 34]{KAL} the existence of a stronger type of minimizer of a strongly quasiconvex function is highlighted.

\begin{theorem} (cf. \cite[Theorem 1]{LAR}\label{th0})
Let $\emptyset\neq K \subseteq \R^n$ be a convex set, and $h : K \to \R$ a strongly quasiconvex function with modulus $\gamma > 0$.
Then, $h$ is $2$-supercoercive (in particular, coercive).
\end{theorem}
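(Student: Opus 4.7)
If $K$ is bounded the claim is vacuous (no sequence in $K$ satisfies $\|x\|\to +\infty$), so assume $K$ is unbounded and fix any $x_0 \in K$. The plan is to establish the quadratic lower bound $h(x) \geq h(x_0) + \tfrac{\gamma}{6}\|x - x_0\|^2$ for $x \in K$ with $\|x - x_0\|$ sufficiently large; this immediately yields $\liminf_{\|x\| \to +\infty} h(x)/\|x\|^2 \geq \gamma/6 > 0$ and hence $2$-supercoercivity (and, a fortiori, coercivity).

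The main tool is an iterated halving along the segment $[x_0, x]$. I set $m_0 := x$ and $m_{k+1} := \tfrac{1}{2}(m_k + x_0)$, so that $m_k = x_0 + 2^{-k}(x - x_0) \in K$ by convexity, with $\|m_k - x_0\| = 2^{-k}\|x - x_0\|$. Applying the strong quasiconvexity inequality \eqref{sqc} with $t = 1/2$ to each pair $(m_k, x_0)$ produces
\[
  h(m_{k+1}) \leq \max\{h(m_k), h(x_0)\} - \frac{\gamma}{8 \cdot 4^k}\|x - x_0\|^2.
\]
Under the inductive assumption $h(m_k) \geq h(x_0)$ for $k = 0, \ldots, N-1$, the maximum collapses to $h(m_k)$, the inequality telescopes, and the geometric sum $\sum_{k=0}^{N-1} 4^{-k} = \tfrac{4}{3}(1 - 4^{-N})$ delivers $h(x) \geq h(m_N) + \tfrac{\gamma}{6}(1 - 4^{-N})\|x - x_0\|^2$, which, combined with $h(m_N) \geq h(x_0)$ and $N \to +\infty$, produces the target bound.

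The crux is justifying $h(m_k) \geq h(x_0)$ along the iteration. For this I would first establish that the sublevel set $S := \{y \in K : h(y) \leq h(x_0)\}$, which is convex by quasiconvexity of $h$, is bounded: if $S$ were unbounded, Proposition \ref{pr20} applied to the restriction $h|_{S}$ — strongly quasiconvex with modulus $\gamma$ and bounded above by $h(x_0)$ — would yield a contradiction, after reducing to the relative interior of $S$ in its affine hull should the ambient nonempty-interior hypothesis fail. Boundedness of $S$ then forces $h(x) > h(x_0)$ as soon as $\|x - x_0\|$ exceeds the diameter of $S$, and strict quasiconvexity (inherited from strong quasiconvexity, cf.\ the implications recalled earlier in the excerpt) passes this strict inequality to the midpoints $m_k$ that still lie outside $S$.

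The hard part will be the tail of the iteration: since the iterates $m_k$ approach $x_0$ they must eventually enter $S$, and since $h$ is not assumed lower semicontinuous one cannot pass the bound $h(m_N) \geq h(x_0)$ to the limit directly. I would address this by truncating the telescoping at a finite index $N$ chosen so that $\|m_N - x_0\|$ equals a fixed small constant $r_0 > 0$, bounding $h(m_N)$ from below by a locally valid estimate (for instance one obtained by applying \eqref{sqc} between $m_N$ and a suitable anchor point in $K\setminus S$), and absorbing the resulting $O(1)$ remainder into the leading $\tfrac{\gamma}{6}\|x - x_0\|^2$ term, which dominates as $\|x - x_0\| \to +\infty$.
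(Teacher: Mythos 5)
Your overall architecture is viable (the survey itself states Theorem \ref{th0} without proof, citing \cite[Theorem 1]{LAR}, so there is no in-paper argument to compare against): the dyadic halving with $t=1/2$ in \eqref{sqc} and the telescoped estimate $h(x)\ge h(m_N)+\tfrac{\gamma}{6}(1-4^{-N})\|x-x_0\|^2$ are correct, and so is the boundedness of $S:=\{y\in K:\ h(y)\le h(x_0)\}$ deduced from Proposition \ref{pr20} applied to $h|_S$ (the restriction to the convex set $S$ keeps the modulus $\gamma$, an unbounded convex set has unbounded relative interior, and \ref{pr20} applies inside the affine hull). The genuine defect is your treatment of the tail. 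Stopping at a fixed small radius $r_0$ and ``bounding $h(m_N)$ from below by a locally valid estimate'' is not possible in general: a strongly quasiconvex function need not be bounded below on bounded sets --- Example \ref{ex3} is unbounded below near $0$ --- and \eqref{sqc} cannot produce such a bound, because it only estimates values at interior points of segments from above; rearranged, it lower-bounds $\max\{h(m_N),h(\text{anchor})\}$, never $h(m_N)$ alone. So the patch you sketch for the ``hard part'' has no way to control the $O(1)$ remainder.

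The gap closes with a one-line change using only what you already established. Put $R:=\sup_{y\in S}\|y-x_0\|<+\infty$ and, for $x\in K$ with $\|x-x_0\|>2R$, stop at the first index $N$ with $\|m_N-x_0\|\le 2R$ (so $N\ge 1$). By minimality, $\|m_k-x_0\|>2R$ for $k\le N-1$, hence $m_k\notin S$ and $h(m_k)>h(x_0)$, so the telescoping is legitimate up to $N$; moreover $\|m_N-x_0\|=\tfrac12\|m_{N-1}-x_0\|>R$, so $m_N\notin S$ as well and $h(m_N)>h(x_0)$ --- no local lower bound and no appeal to strict quasiconvexity are needed, since $m_k\notin S$ already means $h(m_k)>h(x_0)$ by the definition of $S$. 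This yields $h(x)\ge h(x_0)+\tfrac{\gamma}{6}(1-4^{-N})\|x-x_0\|^2\ge h(x_0)+\tfrac{\gamma}{8}\|x-x_0\|^2$ whenever $\|x-x_0\|>2R$, and since $N\to+\infty$ as $\|x-x_0\|\to+\infty$, you even get $\liminf_{\|x\|\to+\infty}h(x)/\|x\|^2\ge\gamma/6>0$, which is the claimed $2$-supercoercivity (and coercivity). With this repair your argument is complete and self-contained, resting only on Proposition \ref{pr20} and inequality \eqref{sqc}.
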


\begin{remark}
 One can rediscover \cite[Theorem 2]{JO2} and \cite[Theorem 3]{JO2} as
 consequences of Theorem \ref{th0}.
\end{remark}

\begin{remark}
In \cite[Proposition 2.1]{ILM} one can find hypotheses guaranteeing the coercivity (but probably not the $2$-supercoercivity) of a strongly quasiconvex function defined on a Hilbert space.
\end{remark}

\begin{theorem} (cf. \cite[Corollary 3]{LAR})\label{th2}
Let $\emptyset\neq K \subseteq \R^n$ be a closed and convex set, and $h : K \to \R$ a strongly quasiconvex function with modulus $\gamma > 0$ that is also lower semicontinuous.
Then, $h$ has exactly one minimizer on $K$.
\end{theorem}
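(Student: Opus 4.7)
The plan is to split the claim into existence and uniqueness, leveraging Theorem \ref{th0} for the former and the defining inequality \eqref{sqc} for the latter.

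For \emph{existence}, I would invoke Theorem \ref{th0} to conclude that $h$ is coercive on $K$ (even $2$-supercoercive, but coercivity is all I need). Pick any $x_0\in K$ and consider the sublevel set $L := \{x\in K : h(x)\leq h(x_0)\}$, which is nonempty. Coercivity makes $L$ bounded, while lower semicontinuity of $h$ together with closedness of $K$ makes $L$ closed; since we work in $\R^n$, $L$ is compact. The infimum of $h$ over $K$ equals the infimum over $L$, and a standard application of Weierstrass' theorem to the lower semicontinuous function $h$ on the compact set $L$ yields the existence of at least one minimizer $\overline{x}\in L\subseteq K$.

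For \emph{uniqueness}, I would argue by contradiction. Suppose there exist two distinct minimizers $\overline{x}_1,\overline{x}_2\in K$, and set $m := h(\overline{x}_1) = h(\overline{x}_2) = \min_K h$. Since $K$ is convex, the midpoint $(\overline{x}_1+\overline{x}_2)/2$ lies in $K$, and applying \eqref{sqc} with $t=1/2$ gives
$$h\!\left(\tfrac{1}{2}\overline{x}_1+\tfrac{1}{2}\overline{x}_2\right)\ \leq\ \max\{h(\overline{x}_1),h(\overline{x}_2)\}-\tfrac{\gamma}{8}\,\lVert \overline{x}_1-\overline{x}_2\rVert^{2}\ =\ m-\tfrac{\gamma}{8}\,\lVert \overline{x}_1-\overline{x}_2\rVert^{2}\ <\ m,$$
contradicting the minimality of $m$. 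Hence the minimizer is unique.

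There is essentially no hard obstacle here: the heavy lifting is done by Theorem \ref{th0}, which turns strong quasiconvexity into the coercivity needed for the Weierstrass argument, and the strict penalty term $-t(1-t)(\gamma/2)\lVert x-y\rVert^2$ built into \eqref{sqc} immediately forbids two distinct points from sharing the minimum value. The only point requiring a moment of care is that coercivity alone (without $2$-supercoercivity) already suffices for the existence step, so no sharper information about the growth of $h$ needs to be extracted from Theorem \ref{th0}.
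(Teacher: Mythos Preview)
Your proposal is correct and follows exactly the route the paper indicates: the text preceding Theorem~\ref{th2} explicitly says it is deduced from Theorem~\ref{th0}, and your existence argument via coercivity plus Weierstrass, together with the uniqueness argument via the penalty term in~\eqref{sqc}, is precisely the intended derivation.
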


\begin{remark}
 If $\emptyset\neq K \subseteq \R^n$ is a closed and convex set, $h : K \to \R$ a lower semicontinuous strongly quasiconvex function with modulus $\gamma > 0$ and $\beta > 0$, then  $\px_{\beta h} (K,\cdot) \neq \emptyset$, but it is not necessarily a singleton, see \cite[Remark 6]{LAR}.
\end{remark}

The minimal value of a strongly quasiconvex function satisfies a quadratic growth property.

\begin{proposition} (cf. \cite[Theorem 1]{KOR}, see also \cite[Proposition 34]{KAL}) \label{pr9}
Let $\emptyset\neq K \subseteq \R^n$ be a convex set, and $h : K \to \R$ a strongly quasiconvex function with modulus $\gamma \geq 0$ that attains its minimum on $K$ at $\overline{x}\in K$. Then, for any point $x \in K$, the following inequality holds
$$h(x)-h(\overline{x})\geq \frac{\gamma}{8} \|x-\overline{x}\|^2.$$
\end{proposition}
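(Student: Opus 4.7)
The plan is to apply the defining inequality \eqref{sqc} at the single well-chosen pair $(x,\overline{x})$ with the midpoint parameter $t=1/2$, and then exploit the fact that $\overline{x}$ is a minimizer to convert the resulting midpoint bound into the claimed quadratic growth.

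More precisely, first I would fix an arbitrary $x \in K$. Since $K$ is convex, the midpoint $(x+\overline{x})/2$ lies in $K$. Because $\overline{x}$ minimizes $h$ on $K$, we have $h(\overline{x}) \leq h(x)$, so $\max\{h(x),h(\overline{x})\} = h(x)$. Setting $t=1/2$ in \eqref{sqc} (applied to $y:=\overline{x}$ and the given $x$) yields
$$h\!\left(\frac{x+\overline{x}}{2}\right) \leq h(x) - \frac{1}{2}\cdot\frac{1}{2}\cdot\frac{\gamma}{2}\,\|x-\overline{x}\|^{2} = h(x) - \frac{\gamma}{8}\,\|x-\overline{x}\|^{2}.$$

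Next, since $(x+\overline{x})/2 \in K$ and $\overline{x}$ is a minimizer of $h$ on $K$, one has $h(\overline{x}) \leq h((x+\overline{x})/2)$. Chaining this with the previous inequality gives
$$h(\overline{x}) \leq h(x) - \frac{\gamma}{8}\,\|x-\overline{x}\|^{2},$$
which is exactly the desired bound after rearrangement.

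There is no real obstacle here: the argument is a one-line application of the strong quasiconvexity inequality at the midpoint, combined with the minimizer property. The only thing worth checking is that the choice $t=1/2$ is optimal for this purely midpoint-based argument — the factor $t(1-t)$ is maximized at $t=1/2$ with value $1/4$, producing exactly the constant $\gamma/8$ that appears in the statement. Note that this proof works uniformly for every $\gamma \geq 0$ (the case $\gamma = 0$ is vacuous), and it does not require any regularity assumption on $h$ beyond the attainment of the minimum on $K$.
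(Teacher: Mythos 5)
Your midpoint argument is correct: taking $t=1/2$ in \eqref{sqc} with $y=\overline{x}$, using $\max\{h(x),h(\overline{x})\}=h(x)$, and then bounding $h\bigl((x+\overline{x})/2\bigr)$ from below by $h(\overline{x})$ via minimality gives exactly $h(x)-h(\overline{x})\geq \frac{\gamma}{8}\|x-\overline{x}\|^2$, and the constant $\gamma/8$ is precisely what the factor $t(1-t)\frac{\gamma}{2}$ produces at $t=1/2$. The survey itself does not reproduce a proof but only cites \cite[Theorem 1]{KOR} and \cite[Proposition 34]{KAL}; your derivation is the natural elementary one directly from the definition, so there is nothing to object to — only the minor remark that for $\gamma=0$ the inequality is not vacuous but trivially true, since it reduces to $h(x)\geq h(\overline{x})$, which is the minimality of $\overline{x}$.
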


When $h$ is strongly quasiconvex, the fact that $\px_{\beta h} (\mathbb{R}^{n},\cdot)$ should be a singleton (for all $\beta > 0$) is equivalent to the convexity of $h$, as highlighted in the following statement.

\begin{proposition}\label{pr3}(cf. \cite[Proposition 14]{LAR})
Let $\emptyset \neq K\subseteq \R^n$ be convex, and $h : K \to \R$. The following statements are equivalent
\begin{itemize}
\item [(a)] $h$ is convex;
\item [(b)] the function $h + \frac{1}{2\beta} \|z-\cdot\|^2$ is strongly convex for all $z \in \R^n$ and all $\beta > 0$;
\item [(c)]the function $h + \frac{1}{2\beta} \|z-\cdot\|^2$ is quasiconvex for all $z \in \R^n$ and all $\beta > 0$;
\item [(d)] the function $h + \frac{1}{2\beta} \|z-\cdot\|^2$ is strongly quasiconvex for all $z \in \R^n$ and all $\beta > 0$.
\end{itemize}
\end{proposition}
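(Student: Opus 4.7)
The plan is to establish the cyclic chain of implications $(a) \Rightarrow (b) \Rightarrow (d) \Rightarrow (c) \Rightarrow (a)$. The first three implications are essentially immediate: $(a) \Rightarrow (b)$ because the sum of a convex function and the strongly convex function $\frac{1}{2\beta}\lVert z-\cdot\rVert^2$ (of modulus $1/\beta$) is strongly convex with modulus $1/\beta$; $(b) \Rightarrow (d)$ is a direct consequence of the implication ``strongly convex $\Rightarrow$ strongly quasiconvex'' already recorded in the scheme after the definition of strong quasiconvexity; and $(d) \Rightarrow (c)$ is likewise read off from that scheme since strongly quasiconvex functions are in particular quasiconvex.

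The real content is thus in $(c) \Rightarrow (a)$, where the quasiconvexity of every regularization has to yield the plain convexity of $h$ itself. Fix $x, y \in K$ with $x \neq y$ and $t \in (0,1)$, set $w_t := tx + (1-t)y \in K$, and pick any $\beta > 0$. The idea is to choose $z$ along the affine line through $x$ and $y$, namely $z = sx + (1-s)y$ with $s \in \R$, so that the two entries of the max in the quasiconvexity inequality are \emph{equal}. Using $\lVert z-x\rVert^2 = (1-s)^2\lVert x-y\rVert^2$ and $\lVert z-y\rVert^2 = s^2\lVert x-y\rVert^2$, the balance condition $h(x) + \frac{1}{2\beta}\lVert z-x\rVert^2 = h(y) + \frac{1}{2\beta}\lVert z-y\rVert^2$ solves in closed form as $s = \frac{1}{2} - \frac{\beta(h(y)-h(x))}{\lVert x-y\rVert^2}$, which is a legitimate real number because $x \neq y$. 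Note that $z$ is not required to belong to $K$, since (c) posits quasiconvexity of $h + \frac{1}{2\beta}\lVert z-\cdot\rVert^2$ for every $z \in \R^n$.

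With this calibrated $z$, applying the quasiconvexity inequality at the pair $(x, y)$ and parameter $t$, and using $\lVert z-w_t\rVert^2 = (s-t)^2\lVert x-y\rVert^2$, reduces to
$$h(w_t) + \frac{(s-t)^2}{2\beta}\lVert x-y\rVert^2 \leq h(x) + \frac{(1-s)^2}{2\beta}\lVert x-y\rVert^2.$$
Factoring $(1-s)^2 - (s-t)^2 = (1-t)[(1-2s) + t]$ and substituting the value of $1-2s$ coming from the definition of $s$ then collapses the inequality to
$$h(w_t) \leq t\,h(x) + (1-t)\,h(y) + \frac{t(1-t)}{2\beta}\lVert x-y\rVert^2.$$
Since $\beta > 0$ was arbitrary, letting $\beta \to +\infty$ eliminates the quadratic error term and yields the convexity inequality, closing the cycle.

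The main obstacle I anticipate is the algebra behind the balancing choice of $s$: one has to verify that the specific value assigned to $s$ is precisely what cancels the would-be dominant term $(1-2s)(1-t)$ against $h(y)-h(x)$, leaving only the residual $\frac{t(1-t)}{2\beta}\lVert x-y\rVert^2$ that is linear in $1/\beta$ and therefore harmless in the limit. This cancellation is the genuine conceptual point; the rest is bookkeeping.
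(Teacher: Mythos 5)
Your proposal is correct. Note that the survey itself does not reproduce a proof of this statement (it is quoted from \cite[Proposition 14]{LAR}), so the comparison is with the route taken in that source: there, after expanding $\frac{1}{2\beta}\lVert z-\cdot\rVert^2$ so that $z$ only enters through a linear term, the nontrivial implication is reduced to the classical characterization that a function is convex exactly when all of its linear perturbations are quasiconvex, applied to $h+\frac{1}{2\beta}\lVert\cdot\rVert^2$, followed by the same limit $\beta\to+\infty$. Your argument proves that step from scratch: the choice $z=sx+(1-s)y$ with $s=\frac12-\beta\bigl(h(y)-h(x)\bigr)/\lVert x-y\rVert^2$ makes the two entries of the max equal, and the algebra you outline is right --- $\lVert z-x\rVert^2=(1-s)^2\lVert x-y\rVert^2$, $\lVert z-y\rVert^2=s^2\lVert x-y\rVert^2$, $\lVert z-w_t\rVert^2=(s-t)^2\lVert x-y\rVert^2$, and $(1-s)^2-(s-t)^2=(1-t)\bigl[(1-2s)+t\bigr]$ with $1-2s=2\beta\bigl(h(y)-h(x)\bigr)/\lVert x-y\rVert^2$ indeed collapses the quasiconvexity inequality to $h(w_t)\le t\,h(x)+(1-t)\,h(y)+\frac{t(1-t)}{2\beta}\lVert x-y\rVert^2$; since this holds for every $\beta>0$ (with $z$ re-chosen for each $\beta$, which is legitimate), the limit gives convexity, the remaining cases $x=y$, $t\in\{0,1\}$ being trivial. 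The easy implications $(a)\Rightarrow(b)\Rightarrow(d)\Rightarrow(c)$ are handled exactly as in the paper's implication scheme. What your direct balancing argument buys is self-containedness --- no appeal to the Crouzeix-type linear-perturbation theorem --- and it makes transparent that only $z$ ranging over the line through $x$ and $y$ is ever needed; what the literature route buys is brevity and the explicit link to that classical quasiconvexity criterion.
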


As the (convex) subdifferential is often empty for nonconvex functions, and the ``mainstream'' nonsmooth ones do not seem to have specific properties in the case of generalized convex functions (as far as we are aware, the only known results concerning the Clarke subdifferential of strongly quasiconvex functions are available in \cite{DUO, VIA}, see Corollary \ref{co2} and Propositions \ref{pr17}-\ref{pr18} below), subdifferential notions specific to quasiconvex functions were proposed in the literature (see \cite{GRP, PEZ}). In the recent work \cite{KAL}, one can find a subdifferential specific to strongly quasiconvex functions. Below we briefly present it together with some of its properties. Other results involving it can be found in \cite[Proposition 40, Corollary 41, Proposition 42]{KAL}.

\begin{definition}\label{de1} (see \cite[Definition 5 $\&$ Definition 19]{KAL})
Let $\emptyset\neq K \subseteq \R^n$, $h:\R^n\to \overline {\R}$ be proper, $\beta >0$ and  $\gamma \geq 0$. The $(\beta, \gamma, K)$-strong subdifferential of $h$ at $\overline{x}\in K\cap \dom h$ is
\begin{eqnarray*}
\partial ^K_{\beta, \gamma} h(\overline{x}) &=& \Big\{z\in \R^n: \max \{h(y), h(\overline{x})\}\geq h(\overline{x})+ \frac{t}{\beta} z^\top (y-\overline{x})\\
 &+& \frac{t}{2} \left(\gamma - \frac{t}{\beta} - t\gamma\right) \|y-\overline{x}\|^2 \ \forall y\in K\ \ \forall t\in [0, 1]\Big\}.
\end{eqnarray*}
When $K=S_{h(\overline{x})}(h)$, we call $\partial_{\beta, \gamma} h(\overline{x}):=\partial ^{S_{h(\overline{x})}(h)}_{\beta, \gamma} h(\overline{x})$ the $(\beta, \gamma)$-strong sublevel subdifferential of $h$ at $\overline{x}$.
\end{definition}

\begin{remark}(see \cite{KAL})
 If $\gamma > 0$, then $\partial^{K}_{\beta, \gamma} h(\overline{x})$ is useful
 for dealing with strongly quasiconvex functions, while if $\gamma = 0$, then
 $\partial^{K}_{\beta, 0} h(\overline{x})$ is an alternative subdifferential for the
 class of quasiconvex functions. Note that when $x \in \dom h$ and $S_{h(x)} (h)
 \subseteq K$,
\begin{equation}\label{inclusion}
   \partial^{K}_{\beta, \gamma} h(x) \, \subseteq \, \partial_{\beta, \gamma}
   h(x), ~ \forall \beta > 0, ~ \forall \gamma \geq 0.
  \end{equation}
\end{remark}

\begin{remark}
In \cite[Remark 6]{KAL} one finds simple examples (like the identity function on $\R$) showing that the $(\beta, \gamma, K)$-strong subdifferential (for $\emptyset\neq K \subseteq \R^n$, $\beta >0$ and  $\gamma \geq 0$) does not coincide with the standard (convex) subdifferential even for continuously differentiable convex functions. Moreover, extending the function in Example \ref{ex3} to the whole space by assigning it the value $+\infty$ outside the interval $[0, 1]$, one has at hand a situation where most of the subdifferentials specific for quasiconvex functions are empty at $0$, while (for $\beta >0$) one has $\partial ^\R_{\beta, 1} h(0) = ]-\infty, -\beta/2]$.
\end{remark}

\begin{proposition} (cf. \cite[Corollary 38]{KAL}) \label{pr14}
Let $\emptyset\neq K \subseteq \R^n$ be a closed convex set, and $h : K\to \overline\R$ strongly quasiconvex function with modulus $\gamma > 0$ on $K$. Then $\partial ^K_{\beta, \gamma} h (x)$ is nonempty, closed and convex for all $x\in K$.
\end{proposition}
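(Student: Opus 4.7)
The plan is to dispatch closedness and convexity quickly from the form of the definition, and then address nonemptiness by separating the case where $\overline{x}$ is the global minimizer from the case where it is not.

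Closedness and convexity are essentially formal: for each fixed pair $(y,t)\in K\times[0,1]$, the defining inequality
\[
\frac{t}{\beta}z^\top(y-\overline{x}) \;\leq\; \max\{h(y),h(\overline{x})\} - h(\overline{x}) - \tfrac{t}{2}\!\left(\gamma-\tfrac{t}{\beta}-t\gamma\right)\|y-\overline{x}\|^2
\]
is affine in $z$ and thus delimits a closed half-space $H_{y,t}\subseteq\R^n$ (collapsing to $\R^n$ when $t=0$ or $y=\overline{x}$). Hence $\partial^K_{\beta,\gamma}h(\overline{x})=\bigcap_{(y,t)\in K\times[0,1]}H_{y,t}$ is an intersection of closed convex sets, and so closed and convex.

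For nonemptiness, I would first invoke Theorem \ref{th2} to locate the unique minimizer $x^\ast\in K$ of $h$. If $\overline{x}=x^\ast$, then $h(y)\geq h(\overline{x})$ for all $y\in K$ and the quadratic growth estimate of Proposition \ref{pr9} gives $h(y)-h(\overline{x})\geq \tfrac{\gamma}{8}\|y-\overline{x}\|^2$; combined with the elementary bound
\[
\max_{t\in[0,1]}\tfrac{t}{2}\!\left(\gamma-\tfrac{t}{\beta}-t\gamma\right) \;=\; \tfrac{\gamma^2\beta}{8(\gamma\beta+1)} \;<\; \tfrac{\gamma}{8},
\]
attained at $t^\ast=\gamma\beta/(2(\gamma\beta+1))$, this shows $0\in\partial^K_{\beta,\gamma}h(\overline{x})$ (note that when the coefficient of $\|y-\overline{x}\|^2$ in the right-hand side is negative, the inequality is trivially satisfied).

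If $\overline{x}\neq x^\ast$, the strict sublevel set $L:=\{y\in K: h(y)<h(\overline{x})\}$ is nonempty and convex, and strong quasiconvexity applied along the chord from $\overline{x}$ to $x^\ast$ places every interior point of this chord inside $L$ while keeping $\overline{x}\notin L$, so $\overline{x}\in\partial L$. A supporting hyperplane of $L$ at $\overline{x}$ yields $v\neq 0$ with $v^\top(y-\overline{x})\leq 0$ for all $y\in L$, and one takes $z=\lambda v$ with $\lambda>0$ to be calibrated. The critical constraint, obtained by sending $t\to 0^+$ in the defining inequality for $y\in L$, reads $\lambda(-v^\top(y-\overline{x}))\geq (\gamma\beta/2)\|y-\overline{x}\|^2$ uniformly on $L$, while the subordinate constraints for $y\in K$ with $h(y)\geq h(\overline{x})$ are easier thanks to the positive slack $h(y)-h(\overline{x})$ on the left-hand side. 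The hard part will be to show that this uniform estimate can be met by a finite $\lambda$---equivalently, that $L$ curves away from the supporting hyperplane at $\overline{x}$ in a quantitatively controlled way. This calibration, which leans on the $2$-supercoercivity from Theorem \ref{th0} and on Proposition \ref{pr9}, is the delicate point carried out in \cite[Corollary 38]{KAL}.
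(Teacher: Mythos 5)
Your closedness and convexity argument is fine (for each fixed pair $(y,t)$ the defining inequality is affine in $z$, so the set is an intersection of closed half-spaces), and your treatment of the case where $\overline{x}$ is the unique minimizer is correct: the computation $\max_{t\in[0,1]}\tfrac{t}{2}(\gamma-\tfrac{t}{\beta}-t\gamma)=\tfrac{\gamma^2\beta}{8(\gamma\beta+1)}<\tfrac{\gamma}{8}$ combined with Proposition \ref{pr9} does give $0_n\in\partial^K_{\beta,\gamma}h(\overline{x})$. But the substance of the proposition is nonemptiness at an \emph{arbitrary} $x\in K$, and there your argument stops: you set up a supporting hyperplane to the strict sublevel set and then state that the required uniform estimate ``is the delicate point carried out in \cite[Corollary 38]{KAL}''. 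Since \cite[Corollary 38]{KAL} \emph{is} the statement being proved (the survey itself quotes it without proof), this is circular; the key step is simply missing.

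Moreover, the sketch as written would not go through. Your claim that the constraints coming from $y\in K$ with $h(y)\geq h(\overline{x})$ are ``easier thanks to the positive slack'' is wrong: for such $y$ on the positive side of $v$ the slack $h(y)-h(\overline{x})$ is fixed, while the term $\tfrac{t\lambda}{\beta}\,v^\top(y-\overline{x})$ grows linearly in $\lambda$, so these constraints bound $\lambda$ from \emph{above}, in direct tension with the lower bound coming from the sublevel set. Concretely, take $h=\lVert\cdot\rVert^2$ on $K=\R^n$ (strongly quasiconvex with $\gamma=2$), $\overline{x}\neq 0_n$ and $z=\lambda\overline{x}$: the sublevel-set constraint at $y=-\overline{x}$ forces $\lambda\geq 2\beta$, while the constraint at $t=1$ for $y=s\overline{x}$ with $s\downarrow 1$ (where $h(y)>h(\overline{x})$) forces $\lambda\leq 2\beta$; the admissible window collapses to the single value $\lambda=2\beta$, so ``take $\lambda$ large enough'' cannot be the mechanism, and the choice of $z$ must balance both families of constraints simultaneously. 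This is why a genuine proof has to produce the subgradient from the full constraint system; note that the only device the survey records for generating elements of $\partial^K_{\beta,\gamma}h(\overline{x})$ is Proposition \ref{pr13}, i.e.\ vectors $u-\overline{x}$ with $\overline{x}\in\px_{\beta h}(K,u)$, which exploits the identity $\tfrac{t}{\beta}z^\top(y-\overline{x})-\tfrac{t^2}{2\beta}\lVert y-\overline{x}\rVert^2=\tfrac{1}{2\beta}\left(\lVert z\rVert^2-\lVert z-t(y-\overline{x})\rVert^2\right)$ together with strong quasiconvexity---a route quite different from, and not reducible to, the supporting-hyperplane-plus-scaling plan you outline.
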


\begin{proposition} (cf. \cite[Proposition 40]{KAL}) \label{pro:KLx}
	Let $K \subseteq \mathbb{R}^n$ be a closed and convex set, $h:
	\mathbb{R}^{n} \rightarrow \overline{\mathbb{R}}$  be a proper and
	lsc function such that $K \subseteq \mathrm{dom}\,h$, $\beta>0$ and
	$\overline{x}\in K$. If $h$ is strongly quasiconvex on $K$ with modulus
	$\gamma>0$, then
	\begin{align}
	y \in K \cap S_{h(\overline{x})} (h) \, \Longrightarrow \, \langle \xi, y
	- \overline{x} \rangle \leq - \frac{\beta \gamma}{2} \lVert y - \overline{x}
	\rVert^2, ~ \forall \xi \in \partial_{\beta, \gamma}^K h(\overline{x}).\notag
\end{align}
\end{proposition}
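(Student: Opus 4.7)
The plan is to unravel the definition of $\partial^K_{\beta,\gamma} h(\overline{x})$ from Definition \ref{de1} and then let the auxiliary parameter $t$ tend to zero to extract the desired quadratic-growth-type inequality.

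First I would fix $\xi \in \partial^K_{\beta,\gamma} h(\overline{x})$ and $y \in K \cap S_{h(\overline{x})}(h)$. By definition of the strong sublevel subdifferential, for every $t \in [0,1]$ one has
\begin{equation*}
\max\{h(y), h(\overline{x})\} \;\geq\; h(\overline{x}) + \frac{t}{\beta}\,\langle \xi, y - \overline{x}\rangle + \frac{t}{2}\left(\gamma - \frac{t}{\beta} - t\gamma\right)\|y - \overline{x}\|^{2}.
\end{equation*}
Since $y \in S_{h(\overline{x})}(h)$ means $h(y) \leq h(\overline{x})$, the maximum on the left collapses to $h(\overline{x})$, and cancelling this common term yields
\begin{equation*}
0 \;\geq\; \frac{t}{\beta}\,\langle \xi, y - \overline{x}\rangle + \frac{t}{2}\left(\gamma - \frac{t}{\beta} - t\gamma\right)\|y - \overline{x}\|^{2} \qquad \forall\, t \in [0,1].
\end{equation*}

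Next, for $t \in (0,1]$ I would divide through by $t/\beta > 0$, obtaining
\begin{equation*}
\langle \xi, y - \overline{x}\rangle \;\leq\; -\,\frac{\beta}{2}\left(\gamma - \frac{t}{\beta} - t\gamma\right)\|y - \overline{x}\|^{2} \;=\; -\,\frac{\beta\gamma}{2}\|y - \overline{x}\|^{2} + \frac{t}{2}\bigl(1 + \beta\gamma\bigr)\|y - \overline{x}\|^{2}.
\end{equation*}
Letting $t \downarrow 0$ makes the last term vanish, and I arrive at the claimed estimate
\begin{equation*}
\langle \xi, y - \overline{x}\rangle \;\leq\; -\,\frac{\beta\gamma}{2}\|y - \overline{x}\|^{2}.
\end{equation*}

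The role of the hypotheses (strong quasiconvexity, closedness/convexity of $K$, lower semicontinuity) is really to ensure, via Proposition \ref{pr14}, that $\partial^K_{\beta,\gamma} h(\overline{x})$ is nonempty so that the conclusion is not vacuous; the inequality itself follows directly from the definition together with the condition $h(y) \leq h(\overline{x})$. I do not anticipate any technical obstacle: the only subtlety is recognising that the parameter $t$ in Definition \ref{de1} plays the role of a free slack that, after cancellation of the common term $h(\overline{x})$, can be pushed to zero to clean up the quadratic coefficient.
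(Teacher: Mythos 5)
Your argument is correct: since the paper only cites \cite[Proposition 40]{KAL} without reproducing a proof, the natural verification is exactly what you do — specialize the inequality in Definition \ref{de1} to $y\in K\cap S_{h(\overline{x})}(h)$ so that the maximum collapses to $h(\overline{x})$ (finite, as $\overline{x}\in K\subseteq\dom h$ and $h$ is proper), divide by $t/\beta>0$, and let $t\downarrow 0$; the strong quasiconvexity indeed only enters through Proposition \ref{pr14} to make the conclusion non-vacuous. One cosmetic remark: you call $\partial^K_{\beta,\gamma}h(\overline{x})$ the ``strong sublevel subdifferential'', but that name is reserved for the case $K=S_{h(\overline{x})}(h)$; this does not affect the argument, since the $y$ you use lies in $K$, which is all the definition of $\partial^K_{\beta,\gamma}h(\overline{x})$ requires.
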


\begin{proposition} (cf. \cite[Proposition 3.3]{LMC}) \label{pr11}
Let $\emptyset\neq K \subseteq \R^n$ be a closed convex set, and $h : \R^n\to \overline\R$ a proper, continuous and strongly quasiconvex function with modulus $\gamma > 0$ on $K$, with $K\subseteq \dom h$. Let $\{x^k\}_k\in K$ and $z^k\in \partial ^K_{\beta, \gamma} h(x^k)$ for $k\in \N$. If $x^k\to \overline{x}\in K$ and $z^k\to 0_n$, then $0_n\in \partial ^K_{\beta, \gamma} h (\overline{x})$. Hence, $\overline{x}$ is a global minimizer of $h$.
\end{proposition}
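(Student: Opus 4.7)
The plan is a straightforward limit-passing argument in the defining inequality of $\partial^K_{\beta,\gamma}$, followed by a small-$t$ test that extracts the global minimality of $\overline{x}$.

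First I would fix an arbitrary $y\in K$ and $t\in[0,1]$. For each $k\in\N$, the hypothesis $z^k\in\partial^K_{\beta,\gamma} h(x^k)$ together with Definition~\ref{de1} yields
$$
\max\{h(y),h(x^k)\}\;\geq\; h(x^k)+\frac{t}{\beta}(z^k)^\top(y-x^k)+\frac{t}{2}\Bigl(\gamma-\frac{t}{\beta}-t\gamma\Bigr)\|y-x^k\|^2.
$$
I would then pass to the limit as $k\to+\infty$ term by term. By continuity of $h$ we have $h(x^k)\to h(\overline{x})$, and since the maximum of two continuous functions is continuous, $\max\{h(y),h(x^k)\}\to\max\{h(y),h(\overline{x})\}$. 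The linear term satisfies $(z^k)^\top(y-x^k)\to 0$ because $z^k\to 0_n$ and $y-x^k\to y-\overline{x}$ is bounded, while the quadratic term converges by the continuity of $\|\cdot\|^2$. The resulting inequality
$$
\max\{h(y),h(\overline{x})\}\;\geq\; h(\overline{x})+\frac{t}{2}\Bigl(\gamma-\frac{t}{\beta}-t\gamma\Bigr)\|y-\overline{x}\|^2
$$
holds for every $y\in K$ and every $t\in[0,1]$, which is precisely $0_n\in\partial^K_{\beta,\gamma} h(\overline{x})$.

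For the ``hence'' part, I would exploit that $\gamma>0$ and $\beta>0$ allow choosing some $t_0\in\,]0,1]$ with $\gamma-t_0/\beta-t_0\gamma>0$ (any $0<t_0<\min\{1,\gamma\beta/(1+\gamma\beta)\}$ works). Setting $C:=(t_0/2)(\gamma-t_0/\beta-t_0\gamma)>0$, the inequality just established reads $\max\{h(y),h(\overline{x})\}\geq h(\overline{x})+C\|y-\overline{x}\|^2$ for all $y\in K$. If some $y\in K$ satisfied $h(y)<h(\overline{x})$, the left-hand side would equal $h(\overline{x})$, forcing $\|y-\overline{x}\|=0$, hence $y=\overline{x}$ and then $h(y)=h(\overline{x})$, a contradiction. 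Therefore $h(y)\geq h(\overline{x})$ for all $y\in K$, so $\overline{x}$ is a global minimizer of $h$ over $K$ (and in fact the unique one, in line with Theorem~\ref{th2}).

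There is no real obstacle here: the argument only needs the continuity of $h$ to transport the inequality to the limit, and the strict positivity of the coefficient $\gamma-t/\beta-t\gamma$ for small $t$ to turn the inclusion $0_n\in\partial^K_{\beta,\gamma} h(\overline{x})$ into a quadratic-growth-type lower bound centered at $\overline{x}$. The only minor care is to verify that an admissible $t_0\in\,]0,1]$ with positive coefficient does exist, which is immediate from $\gamma,\beta>0$.
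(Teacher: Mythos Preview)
Your argument is correct and is exactly the natural route: pass to the limit in the defining inequality of $\partial^K_{\beta,\gamma}$ using the continuity of $h$ and the convergences $x^k\to\overline{x}$, $z^k\to 0_n$, then exploit the positivity of $\gamma-t/\beta-t\gamma$ for small $t$ to extract global minimality (which is precisely the content of \cite[Lemma~2.3]{LMC} invoked elsewhere in the survey). The survey itself does not reproduce a proof here, only citing \cite[Proposition~3.3]{LMC}, but your limit-passing argument is the standard one and matches what that reference does.
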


\begin{proposition} (cf. \cite[Proposition 3.4]{LMC}) \label{pr12}
Let $\emptyset\neq K \subseteq \R^n$ be a closed convex set, and $h : \R^n\to \overline\R$ a proper, continuous and strongly quasiconvex function with modulus $\gamma > 0$ on $K$, with $K\subseteq \inte \dom h$. If $K$ is compact, then $\bigcup_{x\in K} \partial ^K_{\beta, \gamma} h (x)$ is nonempty and bounded.
\end{proposition}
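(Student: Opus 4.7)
The plan splits the statement into nonemptiness, which follows at once from Proposition \ref{pr14}, and boundedness, which I would attack by a compactness-and-contradiction argument based on the continuity of $h$ and the defining inequality of the strong sublevel subdifferential.

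Nonemptiness is immediate: the hypotheses here (closed convex $K$, $h$ strongly quasiconvex on $K$ with modulus $\gamma>0$) are precisely those of Proposition \ref{pr14}, so $\partial^{K}_{\beta,\gamma} h(x)\neq\emptyset$ for every $x\in K$, and therefore the union is nonempty.

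For boundedness, suppose for contradiction that there exist $\{x^k\}_k\subseteq K$ and $z^k\in\partial^{K}_{\beta,\gamma} h(x^k)$ with $\|z^k\|\to+\infty$. Using the compactness of $K$ and of the unit sphere of $\R^n$, extract subsequences (not relabelled) so that $x^k\to\overline{x}\in K$ and $u^k:=z^k/\|z^k\|\to\overline{u}$ with $\|\overline{u}\|=1$. Applying the defining inequality of $\partial^{K}_{\beta,\gamma} h(x^k)$ at any $y\in K$ with $t=1$ gives
\begin{equation*}
\max\{h(y),h(x^k)\}-h(x^k)\ \geq\ \tfrac{1}{\beta}(z^k)^{\top}(y-x^k)+\tfrac{1}{2}\Bigl(\gamma-\tfrac{1}{\beta}-\gamma\Bigr)\|y-x^k\|^2.
\end{equation*}
Since $h$ is continuous on the compact set $K$, the left-hand side is uniformly bounded in $k$ and in $y\in K$. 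Dividing by $\|z^k\|$ and letting $k\to+\infty$ yields $\overline{u}^{\top}(y-\overline{x})\leq 0$ for every $y\in K$, so $\overline{u}$ lies in the normal cone to $K$ at $\overline{x}$.

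What remains is to derive a contradiction with $\|\overline{u}\|=1$. Here I would combine Theorem \ref{th2}, which grants a unique minimizer $\overline{x}^{*}\in K$, with Proposition \ref{pro:KLx} applied at $y=\overline{x}^{*}$ (which lies in $S_{h(x^k)}(h)$ for every $k$): that proposition gives $(z^k)^{\top}(\overline{x}^{*}-x^k)\leq -\tfrac{\beta\gamma}{2}\|\overline{x}^{*}-x^k\|^2$, so dividing by $\|z^k\|$ and passing to the limit preserves $\overline{u}^{\top}(\overline{x}^{*}-\overline{x})\leq 0$ and sharpens the geometry near the minimizer. The hypothesis $K\subseteq\inte\dom h$, together with the compactness of $K$, further supplies a uniform $\delta>0$ with $K+\overline{\cal B}(0_n,\delta)\subseteq\dom h$ and hence a uniform continuity modulus of $h$ on a thickening of $K$; this legitimises the use of slightly perturbed test points $y\in K$ in the defining inequality. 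Sweeping $y\in K$ through a family of directions that spans the affine hull of $K-\overline{x}$ should then force $\overline{u}$ to vanish on that hull, and combined with $\overline{u}\in N_K(\overline{x})$ this gives $\overline{u}=0$. I expect the delicate point to be precisely this last step when $\overline{x}\in\partial K$: the argument has to balance a vanishing scale $t=t_k\to 0^{+}$ against the blow-up rate $\|z^k\|^{-1}$ so that the quadratic term in the defining inequality contributes strictly and constrains $\overline{u}$ sharply enough to close the contradiction.
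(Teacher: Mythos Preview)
Your argument for nonemptiness via Proposition \ref{pr14} is fine, and the compactness-and-contradiction setup for boundedness is natural. However, the final step --- extracting a contradiction from $\overline{u}\in N_K(\overline{x})$ with $\|\overline{u}\|=1$ --- is genuinely incomplete, and the sketch you give does not close it. Once you divide by $\|z^k\|$ and pass to the limit, \emph{every} test point $y\in K$ (including the minimizer $\overline{x}^{*}$ from Theorem \ref{th2}, and any family of perturbations that stays in $K$) yields only the one-sided inequality $\overline{u}^{\top}(y-\overline{x})\le 0$; sweeping $y$ over $K$ therefore merely re-proves $\overline{u}\in N_K(\overline{x})$ and cannot force $\overline{u}=0$ when $\overline{x}\in\partial K$. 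The appeal to Proposition \ref{pro:KLx} at $\overline{x}^{*}$ adds nothing new after division by $\|z^k\|$, and playing with a vanishing scale $t=t_k$ does not manufacture a two-sided control on $\overline{u}$: the quadratic term in the defining inequality either vanishes in the limit or has the wrong sign for that purpose. The hypothesis $K\subseteq\inte\dom h$ gives you a thickening of $K$ inside $\dom h$, but the test points $y$ in the definition of $\partial^{K}_{\beta,\gamma}$ are constrained to $K$, not to $\dom h$, so this thickening does not enlarge the set of admissible directions.

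The obstruction is structural rather than merely technical. If $\overline{x}$ happens to maximize $h$ on $K$, then every $y\in K$ lies in $S_{h(\overline{x})}(h)$, so the defining inequality of $\partial^{K}_{\beta,\gamma} h(\overline{x})$ imposes only lower bounds on $z$ in outward normal directions and no upper bound at all. Concretely, take $K=[0,1]$, $h(x)=x^{2}$ (strongly convex, hence strongly quasiconvex with $\gamma=2$): a direct check of the definition shows that $\partial^{K}_{\beta,\gamma} h(1)$ contains an entire half-line $[\beta,+\infty[$, so $\bigcup_{x\in K}\partial^{K}_{\beta,\gamma} h(x)$ is unbounded. This indicates that your contradiction argument cannot be repaired at the boundary without additional input. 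Note finally that the survey does not supply its own proof of this proposition; it is quoted from \cite{LMC}, so you would need to consult that source to see which extra hypothesis or argument is used there to preclude this boundary blow-up.
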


\begin{remark}
A relation between the convex subdifferential and the strong one for classes of strongly quasiconvex functions presented in Proposition \ref{pr1} (quadratic fractional frunctions) can be found in \cite[Proposition 4.2]{LMY}.
\end{remark}

One of the latest developments involving strongly quasiconvex functions concerns algorithms for minimizing them. Besides the subgradient ones (where the strong subdifferential plays a crucial role), the proximal point method and some of its variations have been extended from the convex setting to minimizing strongly quasiconvex functions. The properties of the proximity operator of a strongly quasiconvex function known at the moment are listed below. For other technical results involving the  proximity operator of strongly quasiconvex functions we refer to \cite{KAL,LAR}.

\begin{proposition} (cf. \cite[Proposition 9]{LAR}) \label{pr5}
 Let $\emptyset\neq K \subseteq \R^n$ be a closed and convex set, $h : K \to \R$ a lower semicontinuous and strongly quasiconvex
 function with modulus $\gamma>0$, and $\beta>0$. Then
$$ \fx \left( \px_{\beta h}(K,\cdot)\right) = \amin_{K} h.$$
\end{proposition}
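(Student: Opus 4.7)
The plan is to prove the two inclusions of the equality $\fx(\px_{\beta h}(K,\cdot)) = \amin_K h$ separately, with the nontrivial one exploiting the quadratic slack afforded by strong quasiconvexity to produce a strictly better proximal candidate whenever the supposed fixed point fails to be a minimizer.

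First I would dispatch the easy inclusion $\amin_K h \subseteq \fx(\px_{\beta h}(K,\cdot))$. If $\overline{x}\in \amin_K h$, then for every $y\in K$ one has $h(\overline{x}) \leq h(y) \leq h(y) + \frac{1}{2\beta}\lVert y - \overline{x}\rVert^2$, while equality holds at $y=\overline{x}$; hence $\overline{x}\in \argmin_{y\in K}\{h(y)+\frac{1}{2\beta}\lVert y-\overline{x}\rVert^2\} = \px_{\beta h}(K,\overline{x})$, which is exactly the fixed-point condition.

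For the reverse inclusion, let $\overline{x}\in \fx(\px_{\beta h}(K,\cdot))$, so that $\overline{x}\in K$ and
\begin{equation*}
h(\overline{x}) \, \leq \, h(y) + \frac{1}{2\beta}\lVert y-\overline{x}\rVert^2 \qquad \forall\, y\in K. \tag{$\ast$}
\end{equation*}
I would argue by contradiction: suppose there exists $y\in K$ with $h(y) < h(\overline{x})$. Set $z_t := t y + (1-t)\overline{x} \in K$ for $t\in[0,1]$. Strong quasiconvexity of $h$ with modulus $\gamma$ yields
\begin{equation*}
h(z_t) \, \leq \, \max\{h(y),h(\overline{x})\} - t(1-t)\frac{\gamma}{2}\lVert y-\overline{x}\rVert^2 \, = \, h(\overline{x}) - t(1-t)\frac{\gamma}{2}\lVert y-\overline{x}\rVert^2 .
\end{equation*}
Plugging $z_t$ into $(\ast)$ and using $\lVert z_t - \overline{x}\rVert^2 = t^2\lVert y-\overline{x}\rVert^2$ gives
\begin{equation*}
0 \, \leq \, \frac{t\lVert y-\overline{x}\rVert^2}{2}\left(\frac{t}{\beta} - (1-t)\gamma\right).
\end{equation*}

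The main (and only) obstacle is to choose $t$ so the parenthesis becomes strictly negative: since the bracket tends to $-\gamma<0$ as $t\downarrow 0$, any sufficiently small $t\in(0,1)$ works, and since $y\neq \overline{x}$ (by strict inequality of values) the whole right-hand side is strictly negative, contradicting $(\ast)$. Therefore $h(\overline{x}) \leq h(y)$ for every $y\in K$, i.e., $\overline{x}\in \amin_K h$. Note that lower semicontinuity of $h$ and closedness of $K$ are not needed for this particular equivalence; they enter elsewhere (e.g., Theorem \ref{th2}) to guarantee both sets are nonempty.
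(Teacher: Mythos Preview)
Your proof is correct; in particular the contradiction via $z_t=ty+(1-t)\overline{x}$ and the sign analysis of $\frac{t}{\beta}-(1-t)\gamma$ for small $t$ is exactly the right mechanism, and your remark that lower semicontinuity and closedness are not actually used for the set equality itself is accurate. The survey does not reproduce a proof here (it cites \cite{LAR}), but your computation is precisely the specialization $x=\overline{x}$ of the inequality recorded as Proposition~\ref{pr6} (which is \cite[Proposition~7]{LAR}, the tool on which \cite[Proposition~9]{LAR} rests), so your approach coincides with the paper's.
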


\begin{remark}\label{re1} (cf. \cite[Corollary 5]{LAR}) The proximity operator of a strongly quasiconvex function is usually set-valued, turning into a single-valued mapping when the sum of the strongly quasiconvex function and a half of a squared norm (possibly multiplied by a positive constant) is strongly quasiconvex, too.
An example where the sum of two strongly quasiconvex functions is strongly quasiconvex, a situation which does not happen in general, can be found in \cite[Example 4.1]{ILM} (see also \cite[Remark 4.2]{ILM}). Note also that Proposition \ref{pr3} provides a restriction on this assumption.
\end{remark}

A partial extension of Proposition \ref{pr5} for Bregman proximity operators is provided below (see also \cite[Remark 3.4]{LAM}).

\begin{proposition} (cf. \cite[Proposition 3.3]{LAM}) \label{pr23}
Let $K \subseteq \R^n$ be a closed and convex set with a nonempty interior, $h : K \to \R$ a lower semicontinuous and strongly quasiconvex
 function with modulus $\gamma>0$, and $\beta>0$. Then
$$ \fx \left( \px^{\varphi}_{\beta h}(K,\cdot)\right) \supseteq \amin_{K} h.$$
\end{proposition}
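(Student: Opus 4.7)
The plan is to show directly that any $\overline{x}\in \amin_{K} h$ realizes the minimum in the definition of $\px^{\varphi}_{\beta h}(K,\overline{x})$, hence is a fixed point. By Theorem \ref{th2}, under the stated hypotheses $h$ admits a unique minimizer $\overline{x}$ on $K$, and one works under the standing assumption (implicit in the very definition of $\px^{\varphi}_{\beta h}$) that $\overline{x}\in K\cap\cl S$ so that the Bregman proximity operator is meaningfully evaluated at $\overline{x}$.

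The core step is to combine two elementary inequalities. First, since $\overline{x}$ minimizes $h$ on $K$, one has $h(y)\geq h(\overline{x})$ for every $y\in K$. Second, the basic positivity property of the Bregman distance (a consequence of the convexity of $\varphi$ on $\cl S$) gives $D_{\varphi}(y,\overline{x})\geq 0$ for all $y\in\cl S$, while the definition of $D_{\varphi}$ together with the vanishing of the linearization term at $y=\overline{x}$ yields $D_{\varphi}(\overline{x},\overline{x})=0$. Adding the first inequality to $1/\beta$ times the second, for every $y\in K\cap\cl S$ one obtains
$$h(y)+\frac{1}{\beta}D_{\varphi}(y,\overline{x})\;\geq\; h(\overline{x})+\frac{1}{\beta}D_{\varphi}(\overline{x},\overline{x}),$$
so $\overline{x}$ belongs to the argmin set defining $\px^{\varphi}_{\beta h}(K,\overline{x})$. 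Thus $\overline{x}\in\fx\bigl(\px^{\varphi}_{\beta h}(K,\cdot)\bigr)$ and the inclusion follows.

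There is essentially no real obstacle in the forward direction, since the argument uses only minimality of $\overline{x}$ and nonnegativity of $D_{\varphi}$, without invoking the strong quasiconvexity modulus $\gamma$ at all. The reason only a set-inclusion (rather than the equality available in Proposition \ref{pr5}) is claimed is precisely the asymmetry of the Bregman distance: to reverse the inclusion, one would need an argument analogous to the Euclidean one, perturbing a putative fixed point along a segment toward the minimizer $x^{*}$ and using the strong quasiconvexity inequality \eqref{sqc} together with a quadratic upper estimate of $D_{\varphi}(t x^{*}+(1-t)\overline{x},\overline{x})$ in $t$. Such an estimate requires additional regularity of $\varphi$ (e.g.\ local Lipschitz-smoothness of $\nabla\varphi$) that is not part of the minimal list of Bregman properties assumed here, which is presumably why the authors restrict themselves to the ``partial extension''.
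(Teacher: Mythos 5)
Your argument is correct and is exactly the expected one: the survey states Proposition \ref{pr23} without proof (citing the original source), and the inclusion $\amin_K h\subseteq\fx\bigl(\px^{\varphi}_{\beta h}(K,\cdot)\bigr)$ follows precisely as you say, from minimality of $\overline{x}$ together with $D_{\varphi}(y,\overline{x})\geq 0$ and $D_{\varphi}(\overline{x},\overline{x})=0$, with strong quasiconvexity entering only through Theorem \ref{th2} to make the statement non-vacuous. The one point to state a bit more carefully is the compatibility assumption: since $D_{\varphi}(\cdot,\cdot)$ takes its second argument in $S$, evaluating $\px^{\varphi}_{\beta h}(K,\overline{x})$ requires $\overline{x}\in S$ (not merely $\overline{x}\in K\cap\cl S$), which is the implicit hypothesis under which the fixed-point inclusion is to be read; your closing explanation of why only an inclusion (rather than the equality of Proposition \ref{pr5}) is claimed is also sound.
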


The next result is provided in \cite{LAR} for $x\in K\subseteq \R^n$, but it can be extended to the whole space without problems.

\begin{proposition} (cf. \cite[Proposition 7]{LAR}) \label{pr6}
Let $\emptyset\neq K \subseteq \R^n$ be a closed and convex set, $h : K \to \R$ a lower semicontinuous and strongly
 quasiconvex function with modulus $\gamma \geq 0$, $\beta> 0$, and $x \in \R^n$. If $\overline{x} \in \px_{\beta h} (K, x)$, then
 for all $y \in K$ and all $t\in [0, 1]$ one has
 $$\max \{h(y), h(\overline{x})\} \geq h(\overline{x}) + \frac{t}{
  \beta}  (x - \overline{x})^{\top} (y - \overline{x}) + \frac{t}{2}
  \left(\gamma - \frac{t}{\beta} - t \gamma \right) \lVert y -
  \overline{x} \rVert^{2}.$$
\end{proposition}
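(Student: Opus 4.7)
The plan is to combine the optimality condition encoded in $\overline{x} \in \px_{\beta h}(K,x)$ with the strong quasiconvexity inequality~\eqref{sqc} applied along the segment from $\overline{x}$ to $y$, and then tidy up by expanding the squared norm.

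Concretely, fix $y \in K$ and $t \in [0,1]$, and set $y_t := (1-t)\overline{x} + t y$. Since $K$ is convex, $y_t \in K$, so the minimizing property of $\overline{x}$ for the function $z \mapsto h(z) + \frac{1}{2\beta}\|z - x\|^2$ on $K$ yields
$$h(\overline{x}) + \frac{1}{2\beta}\|\overline{x} - x\|^2 \leq h(y_t) + \frac{1}{2\beta}\|y_t - x\|^2.$$
Next, I would apply the strong quasiconvexity of $h$ to the point $y_t = (1-t)\overline{x} + ty$ to obtain
$$h(y_t) \leq \max\{h(y), h(\overline{x})\} - t(1-t)\frac{\gamma}{2}\|y - \overline{x}\|^2.$$

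The remaining step is a direct algebraic expansion. Writing $y_t - x = (\overline{x} - x) + t(y - \overline{x})$ and expanding the square gives
$$\frac{1}{2\beta}\|y_t - x\|^2 - \frac{1}{2\beta}\|\overline{x} - x\|^2 = \frac{t}{\beta}(\overline{x} - x)^\top (y - \overline{x}) + \frac{t^2}{2\beta}\|y - \overline{x}\|^2.$$
Substituting this and the strong quasiconvexity bound into the optimality inequality and rearranging, the coefficient of $\|y - \overline{x}\|^2$ becomes $-\tfrac{t^2}{2\beta} + \tfrac{t(1-t)\gamma}{2} = \tfrac{t}{2}\bigl(\gamma - \tfrac{t}{\beta} - t\gamma\bigr)$, which is exactly what the claim requires.

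There is no genuine obstacle here; the only thing to be careful about is orienting the inner product correctly (the sign flips from $(\overline{x}-x)^\top(y-\overline{x})$ to $(x-\overline{x})^\top(y-\overline{x})$ when the term is moved to the other side of the inequality) and checking that the degenerate case $\gamma = 0$ goes through unchanged, since~\eqref{sqc} then reduces to ordinary quasiconvexity and the argument is formally identical.
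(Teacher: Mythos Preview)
Your proof is correct and is exactly the natural argument: compare $\overline{x}$ with the convex combination $y_t=(1-t)\overline{x}+ty$ via the proximal optimality, bound $h(y_t)$ using strong quasiconvexity, expand $\|y_t-x\|^2$, and collect terms. The paper does not actually include a proof of this proposition (it is quoted from \cite[Proposition~7]{LAR}), but your derivation is precisely the standard one behind that result, and every step---including the sign bookkeeping and the $\gamma=0$ case---is handled correctly.
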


\begin{proposition} (cf. \cite[Proposition 36]{KAL}) \label{pr13}
Let $\emptyset\neq K \subseteq \R^n$ be a closed and convex set, $h : K \to \R$ a lower semicontinuous and strongly quasiconvex function with modulus $\gamma \geq 0$, $\beta> 0$ and $\overline{x}, z \in K$. Then
$$\overline{x} \in \px_{\beta h} (K, z) \Longrightarrow z-\overline{x} \in \partial ^K_{\beta, \gamma} h(\overline{x}).$$
\end{proposition}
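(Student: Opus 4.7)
The plan is to obtain the conclusion as an essentially immediate consequence of Proposition \ref{pr6}, once one reads off the right-hand side of the inequality as a membership condition in the strong subdifferential.

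First, I would invoke Proposition \ref{pr6} with the point $x$ there taken to be $z$ here. Since $\overline{x} \in \px_{\beta h}(K,z)$ by hypothesis, that proposition yields, for every $y \in K$ and every $t \in [0,1]$,
$$\max\{h(y),h(\overline{x})\} \geq h(\overline{x}) + \frac{t}{\beta}(z-\overline{x})^\top (y-\overline{x}) + \frac{t}{2}\left(\gamma - \frac{t}{\beta} - t\gamma\right)\lVert y-\overline{x}\rVert^2.$$

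Second, I would compare this inequality with Definition \ref{de1}: a vector $w\in\R^n$ lies in $\partial^K_{\beta,\gamma} h(\overline{x})$ exactly when, for all $y\in K$ and all $t\in[0,1]$,
$$\max\{h(y),h(\overline{x})\} \geq h(\overline{x}) + \frac{t}{\beta}w^\top (y-\overline{x}) + \frac{t}{2}\left(\gamma - \frac{t}{\beta} - t\gamma\right)\lVert y-\overline{x}\rVert^2.$$
Setting $w := z - \overline{x}$, the two inequalities coincide, so the inequality produced by Proposition \ref{pr6} is precisely the defining condition for membership of $z-\overline{x}$ in $\partial^K_{\beta,\gamma} h(\overline{x})$. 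This yields the desired implication.

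There is no substantial obstacle here: the whole argument is a direct match between the inequality characterizing proximal points of lsc strongly quasiconvex functions (Proposition \ref{pr6}) and the definition of the $(\beta,\gamma,K)$-strong subdifferential. The only mild point worth checking is that Proposition \ref{pr6} is stated for $x\in\R^n$ whereas here $z\in K$, but since $K\subseteq\R^n$ this inclusion is automatic, and no further hypothesis (such as $z$ being in the domain or lying below some sublevel set) is needed because the defining inequality is required to hold for all $y\in K$ and all $t\in[0,1]$ with no side condition on $z-\overline{x}$.
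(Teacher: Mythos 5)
Your argument is correct and is exactly the intended one: the inequality furnished by Proposition \ref{pr6} with $x=z$ is literally the defining condition in Definition \ref{de1} for $z-\overline{x}$ to belong to $\partial^K_{\beta,\gamma} h(\overline{x})$, which is how the cited result \cite[Proposition 36]{KAL} is obtained. No gaps; the remark about $z\in K\subseteq\R^n$ versus $x\in\R^n$ in Proposition \ref{pr6} is handled correctly.
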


\begin{remark}
The reverse implication in Proposition \ref{pr13} is an open problem.
\end{remark}

\begin{remark}
In our work \cite{GLH} (see also \cite{ILM}), we extended to Hilbert spaces Proposition \ref{pr1}, Proposition \ref{pr9}, Proposition \ref{pr6}, Proposition \ref{pr5} and Remark \ref{re1}. The reader is invited to verify whether other similar results involving strongly quasiconvex functions (presented here or not) can be extended from finitely-dimensional spaces to that setting, too. We also refer to the very recent preprint \cite{NAS} for
more results involving strongly quasiconvex functions in infinitely dimensional spaces.
\end{remark}

Now, we present some results involving strongly quasiconvex functions having some differentiability properties. We begin with a first-order characterization of the strong quasiconvexity of $C^1$ functions, followed by some consequences and other related statements.

\begin{proposition} (cf. \cite[Theorem 1]{JO3}, originally from \cite[Theorems 2 $\&$ 6]{VNC}) \label{pr19}
Let $\emptyset\neq K \subseteq \R^n$ be a convex set, $\gamma > 0$, and $h : K \to \R$ a continuously differentiable function. Then $h$ is a strongly quasiconvex function with modulus $\gamma$ if and only if
$$h(x)\leq h(y) \ \Longrightarrow \ \nabla h(y)^\top (y-x)\geq \frac{\gamma}{2} \|x-y\|^2, \ \forall x, y\in K.$$
\end{proposition}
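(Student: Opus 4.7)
The plan for necessity is a standard limit argument. I would fix $x, y \in K$ with $h(x) \leq h(y)$, so that $\max\{h(x), h(y)\} = h(y)$, apply the defining inequality~\eqref{sqc} at parameter $1-s$ for $s \in (0, 1]$ to obtain
\begin{equation*}
h((1-s)y + sx) - h(y) \leq -s(1-s)\frac{\gamma}{2}\|x-y\|^{2},
\end{equation*}
then divide by $s > 0$ and let $s \to 0^{+}$. The $C^{1}$ hypothesis forces the left-hand side to converge to the directional derivative $\nabla h(y)^{\top}(x-y)$, and rearranging delivers the claimed gradient inequality.

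For sufficiency, I would first observe that the hypothesis trivially subsumes the classical first-order characterization of quasiconvexity ($h(x) \leq h(y) \Rightarrow \nabla h(y)^{\top}(y-x) \geq 0$), so $h$ is automatically quasiconvex on $K$. Next, Proposition~\ref{pr4} reduces the problem to the one-dimensional case: fixing $x \neq y$ in $K$ and setting $\phi(t) := h((1-t)x + ty) \in C^{1}([0,1])$ with $\phi'(t) = \nabla h((1-t)x+ty)^{\top}(y-x)$, the goal becomes $\phi(t) \leq \max\{\phi(0), \phi(1)\} - c\,t(1-t)$ with $c := (\gamma/2)\|x-y\|^{2}$, while the hypothesis rewrites as $\phi(s) \leq \phi(r) \Rightarrow (r-s)\phi'(r) \geq c(r-s)^{2}$.

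I would then argue by contradiction, assuming without loss of generality $\phi(0) \leq \phi(1)$, via the auxiliary $F(t) := \phi(t) + c\,t(1-t) - \phi(1)$: since $F(0) \leq 0$ and $F(1) = 0$, any positive value of $F$ would be attained at an interior maximizer $t^{*} \in (0, 1)$ with $F'(t^{*}) = 0$, whence $\phi'(t^{*}) = c(2t^{*}-1)$. The easy case $\phi(0) \leq \phi(t^{*})$ is dispatched by the pair $(0, t^{*})$ in the rewritten hypothesis, which yields $\phi'(t^{*}) \geq c\,t^{*}$ and thus forces $t^{*} \geq 1$, contradicting $t^{*} \in (0, 1)$.

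The residual case $\phi(t^{*}) < \phi(0) \leq \phi(1)$ is the main technical obstacle: the hypothesis applied to pairs involving $t^{*}$ yields $\phi'(0) < 0 < \phi'(1)$, forcing an interior global minimizer $\overline{t} \in (0,1)$ of $\phi$, and I would close the argument via a level-matching trick across $\overline{t}$. Namely, writing $s^{+}(t) \in [\overline{t}, 1]$ for the unique level-matching point with $\phi(s^{+}(t)) = \phi(t)$ for $t \in [0, \overline{t}]$, applying the hypothesis to both $(s^{+}(t), t)$ and its mirror gives $|\phi'(t)|, |\phi'(s^{+}(t))| \geq c(s^{+}(t) - t)$; integrating these sharpened derivative bounds on $[0, \overline{t}]$ and $[\overline{t}, 1]$ and combining with the margin $\phi(1) - \phi(0) \geq 0$ eventually contradicts $F(t^{*}) > 0$. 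A cleaner alternative would be to directly invoke the original two-sided differentiation argument of \cite{VNC} on which the proposition is based.
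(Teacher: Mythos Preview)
The paper does not supply its own proof of this proposition: it is a survey item quoted verbatim from \cite{JO3,VNC}, so there is nothing in the text to compare your argument against. I can only assess your proposal on its own merits.

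Your necessity argument is the standard and correct one: divide \eqref{sqc} by $s$ and pass to the limit using $C^{1}$.

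For sufficiency, the reduction to the one-dimensional function $\phi$ via Proposition~\ref{pr4} is legitimate, and your treatment of the case $\phi(0)\le\phi(t^{*})$ is clean and correct. The problem is the residual case $\phi(t^{*})<\phi(0)\le\phi(1)$. What you have written there is not a proof but a programme: you introduce the level-match $s^{+}$, record the bounds $|\phi'(t)|,|\phi'(s^{+}(t))|\ge c\,(s^{+}(t)-t)$, and then assert that ``integrating these sharpened derivative bounds \dots\ eventually contradicts $F(t^{*})>0$''. No such contradiction is exhibited, and it is not obvious how the two one-sided integrals combine with the margin $\phi(1)-\phi(0)\ge 0$ to force $F(t^{*})\le 0$; a naive integration on $[t^{*},1]$ alone yields only $\phi(1)-\phi(t^{*})\ge \tfrac{c}{2}(1-t^{*})^{2}$, which is weaker than the required $c\,t^{*}(1-t^{*})$ once $t^{*}>1/3$. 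Your closing sentence, proposing to ``directly invoke the original two-sided differentiation argument of \cite{VNC}'', is an acknowledgment that this branch is unfinished.

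If you want to rescue the contradiction approach, one workable line is to note that by quasiconvexity $\phi$ has a minimiser $\bar t$, and for $r\in[\bar t,1]$ with $\phi(r)\ge\phi(0)$ every $s\in[0,r]$ satisfies $\phi(s)\le\phi(r)$, so the hypothesis gives $\phi'(r)\ge cr$; integrating this sharper bound from such an $r$ up to $1$ already yields $\phi(1)-\phi(r)\ge \tfrac{c}{2}(1-r)(1+r)\ge c\,r(1-r)$. The remaining range, where $\phi(r)<\phi(0)$, then requires the genuine two-sided level-matching estimate you allude to, and that computation must actually be carried out rather than asserted.
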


The following two corollaries are easy consequences of Proposition \ref{pr19}.

\begin{corollary}\label{co3} (cf. \cite[Corollary 5]{LMV})
Let $\emptyset\neq K \subseteq \R^n$ be a convex set, $\gamma \geq 0$, and $h : K \to \R$ a differentiable and strongly quasiconvex function with modulus $\gamma$.
If for every $x, y \in K$ one has $\nabla h(x)^\top (x-y) \geq 0$ whenever $x\in S_{h(y)}(h)$, then $h$ is strongly convex with modulus $\gamma$.
\end{corollary}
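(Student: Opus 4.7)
The plan is to establish the first-order characterization of strong convexity with modulus $\gamma$, i.e. $h(y) \geq h(x) + \nabla h(x)^\top (y-x) + (\gamma/2)\|y-x\|^2$ for every $x, y \in K$, by combining the pointwise information supplied by the hypothesis with the gradient inequality provided by Proposition \ref{pr19}.

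First I would fix $x, y \in K$ and, relabeling if necessary, assume $h(x) \leq h(y)$, so that $x \in S_{h(y)}(h)$. The hypothesis then delivers $\nabla h(x)^\top (x-y) \geq 0$, equivalently $\nabla h(x)^\top (y-x) \leq 0$. On the other hand, the strong quasiconvexity of $h$ and Proposition \ref{pr19} give $\nabla h(y)^\top (y-x) \geq (\gamma/2)\|x-y\|^2$. Subtracting the first estimate from the second produces the strong monotonicity of $\nabla h$, namely $(\nabla h(y) - \nabla h(x))^\top (y-x) \geq (\gamma/2)\|x-y\|^2$.

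Next I would integrate this monotonicity estimate along the segment $[x,y] \subseteq K$ (which lies in $K$ by convexity). Writing $h(y) - h(x) - \nabla h(x)^\top(y-x) = \int_0^1 [\nabla h(x + t(y-x)) - \nabla h(x)]^\top (y-x)\, dt$ and applying the monotonicity bound to each pair $(x, x + t(y-x))$ converts the right-hand side into a quadratic lower bound of the desired form, yielding the first-order strong convexity inequality.

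The main obstacle is tracking the constant. The straightforward summation above only produces strong monotonicity with modulus $\gamma/2$, which upon direct integration gives strong convexity with a modulus smaller than the asserted $\gamma$. To recover the full modulus I expect one has to exploit the hypothesis and Proposition \ref{pr19} at every intermediate segment point $w_t = x + t(y-x)$—each of which lies in $S_{h(y)}(h)$ because strong quasiconvexity forces $h(w_t) \leq h(y)$—and bootstrap the resulting bounds, or replace the integration step by a direct comparison with the quantitative strong quasiconvex inequality $h(w_t) \leq h(y) - t(1-t)(\gamma/2)\|x-y\|^2$ that absorbs the factor-of-two loss.
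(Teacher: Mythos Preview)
The survey does not supply a proof here; it merely cites \cite[Corollary 5]{LMV} and labels the result an ``easy consequence of Proposition~\ref{pr19}''. Your strategy---combine the hypothesis with Proposition~\ref{pr19} to get a monotonicity estimate for $\nabla h$ and then integrate along the segment---is exactly the natural reading of that hint, and it does produce strong convexity.

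Your diagnosis of the constant is correct, though, and neither of your proposed repairs closes the gap. Exploiting both inequalities at every intermediate point $w_t=x+t(y-x)$ still gives only
\[
(\nabla h(w_t)-\nabla h(x))^{\top}(w_t-x)\ \geq\ \tfrac{\gamma}{2}\,\lVert w_t-x\rVert^{2},
\]
because for each pair one of the two ingredients (the hypothesis on the point with smaller value, Proposition~\ref{pr19} on the point with larger value) contributes $\tfrac{\gamma}{2}\lVert w_t-x\rVert^{2}$ while the other contributes only $0$, irrespective of which of $h(x)$, $h(w_t)$ is larger. Integrating then yields $h(y)-h(x)-\nabla h(x)^{\top}(y-x)\geq \tfrac{\gamma}{4}\lVert y-x\rVert^{2}$, i.e.\ strong convexity with modulus $\gamma/2$. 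The defining strong quasiconvex inequality you invoke at the end compares only to $\max\{h(x),h(y)\}$, so it does not absorb the missing factor either.

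In short: your argument is sound and matches the route implicit in the paper, but it delivers modulus $\gamma/2$ rather than $\gamma$; from Proposition~\ref{pr19} and the stated hypothesis alone there does not appear to be a way to recover the full constant, so either an additional idea from \cite{LMV} is needed or the modulus in the survey's statement is a slip.
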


\begin{corollary}\label{co4} (cf. \cite[Corollary 6]{LMV})
Let $\emptyset\neq K \subseteq \R^n$ be a convex set, $\gamma \geq 0$, and $h : K \to \R$ a differentiable function. Then $h$ is strongly quasiconvex with modulus $\gamma$ if and only if $\nabla h(x)\in \partial_{1, \gamma} h(x)$ for all $x\in K\cap \dom h$.
\end{corollary}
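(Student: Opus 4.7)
The plan is to read the claim as a direct translation between the defining inequality of $\partial_{1,\gamma}h$ specialized at $z=\nabla h(x)$ and the first-order characterization of strong quasiconvexity already established in Proposition \ref{pr19}. Concretely, I would start from the membership $\nabla h(x)\in \partial_{1,\gamma}h(x) = \partial^{S_{h(x)}(h)}_{1,\gamma}h(x)$ and plug $\beta=1$ together with the ambient set $S_{h(x)}(h)$ into Definition \ref{de1}.

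Since $h(y)\le h(x)$ on the sublevel set in question, the maximum in the definition collapses to $h(x)$, and the defining inequality reduces, for every $y\in K$ with $h(y)\le h(x)$ and every $t\in[0,1]$, to
\begin{equation*}
 0 \;\geq\; t\,\nabla h(x)^\top(y-x) \;+\; \frac{t}{2}\bigl(\gamma - t(1+\gamma)\bigr)\|y-x\|^{2}.
\end{equation*}
For $t=0$ this is trivial; for $t\in(0,1]$, dividing by $t$ and rearranging yields
\begin{equation*}
 \nabla h(x)^\top(x-y) \;\geq\; \tfrac{1}{2}\bigl(\gamma - t(1+\gamma)\bigr)\|x-y\|^{2}.
\end{equation*}
The coefficient $\gamma - t(1+\gamma)$ is strictly decreasing in $t$, so the entire family of inequalities over $t\in[0,1]$ is equivalent to the single tightest instance obtained as $t\to 0^{+}$, namely $\nabla h(x)^\top(x-y) \geq \tfrac{\gamma}{2}\|x-y\|^{2}$. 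Hence the condition ``$\nabla h(x)\in \partial_{1,\gamma}h(x)$ for every $x\in K\cap\dom h$'' is equivalent to
\begin{equation*}
 h(y)\le h(x) \;\Longrightarrow\; \nabla h(x)^\top(x-y) \;\geq\; \tfrac{\gamma}{2}\|x-y\|^{2}, \quad \forall\, x,y\in K.
\end{equation*}

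Renaming the roles of $x$ and $y$ in this implication recovers verbatim the first-order characterization of Proposition \ref{pr19}, so strong quasiconvexity of modulus $\gamma$ is equivalent to $\nabla h(x)\in\partial_{1,\gamma}h(x)$ on $K\cap\dom h$. The only mildly delicate step is the reduction over $t\in[0,1]$ to the tightest instance; everything else is an unpacking of Definition \ref{de1} followed by an appeal to Proposition \ref{pr19}.
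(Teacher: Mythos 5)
Your proposal is correct and follows exactly the route the paper intends: Corollary \ref{co4} is presented as an immediate consequence of Proposition \ref{pr19}, and your unpacking of Definition \ref{de1} with $\beta=1$ on the sublevel set, collapsing the maximum, dividing by $t$ and passing to the supremum of the right-hand side as $t\to 0^{+}$ (valid since a common upper bound dominates the supremum), recovers precisely that first-order characterization after swapping $x$ and $y$. The only caveat --- inherited from the paper's own statement rather than introduced by you --- is that Proposition \ref{pr19} is stated for $\gamma>0$ and continuously differentiable $h$, while the corollary allows $\gamma\geq 0$ and mere differentiability.
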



\begin{proposition} (cf. \cite[relation (4)]{KOR}) \label{pr7}
Let $\emptyset\neq K \subseteq \R^n$ be a convex set, and $h : K \to \R$ a strongly quasiconvex function with modulus $\gamma > 0$ that is differentiable in all directions at $\overline{x}\in K$. Then, for any point $x \in K$ such that $h(x) \leq h(\overline{x})$, the following inequality holds
$$h^\prime (\overline{x}, x-\overline{x})\leq -\frac{\gamma}{2} \|x-\overline{x}\|^2.$$
\end{proposition}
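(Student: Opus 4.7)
The plan is to apply the defining inequality \eqref{sqc} of strong quasiconvexity directly on the segment joining $\overline{x}$ and $x$, and then pass to the limit to recover the directional derivative. Concretely, I would fix $x\in K$ with $h(x)\leq h(\overline{x})$ and note that, since $\max\{h(x),h(\overline{x})\}=h(\overline{x})$, the inequality \eqref{sqc} with $y=x$ yields, for every $t\in [0,1]$,
$$h(\overline{x}+t(x-\overline{x}))\leq h(\overline{x}) - t(1-t)\frac{\gamma}{2}\|x-\overline{x}\|^2.$$

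Next, I would rearrange this as
$$\frac{h(\overline{x}+t(x-\overline{x}))-h(\overline{x})}{t}\leq -(1-t)\frac{\gamma}{2}\|x-\overline{x}\|^2$$
for $t\in (0,1]$, which is legitimate because $K$ is convex so the point $\overline{x}+t(x-\overline{x})$ lies in $K$ and the left-hand side is well defined. The hypothesis that $h$ is differentiable in all directions at $\overline{x}$ means that the limit of the left-hand side as $t\downarrow 0$ exists and equals $h^{\prime}(\overline{x},x-\overline{x})$, so letting $t\to 0^{+}$ in the displayed inequality gives the desired bound $h^{\prime}(\overline{x},x-\overline{x})\leq -\tfrac{\gamma}{2}\|x-\overline{x}\|^2$.

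There is essentially no obstacle: the argument is a one-sided limit applied to the quadratic defect $t(1-t)\tfrac{\gamma}{2}\|x-\overline{x}\|^2$ built into the definition of strong quasiconvexity. The only mild subtlety worth flagging is the factor $(1-t)$, which tends to $1$ rather than vanishing, so that the whole quadratic penalty $\tfrac{\gamma}{2}\|x-\overline{x}\|^2$ survives in the limit (and not, say, a weaker coefficient). No smoothness beyond the existence of the one-sided directional derivative at $\overline{x}$ in the direction $x-\overline{x}$ is needed, which matches the statement's hypothesis.
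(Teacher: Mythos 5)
Your proof is correct: the survey states this result only with a citation to Korablev, and the argument you give — applying \eqref{sqc} with the roles chosen so that $\max\{h(x),h(\overline{x})\}=h(\overline{x})$, dividing by $t$, and letting $t\downarrow 0$ so the factor $(1-t)$ tends to $1$ — is exactly the standard proof behind the cited relation. Nothing is missing; convexity of $K$ justifies the difference quotient and the hypothesis of directional differentiability at $\overline{x}$ gives the limit.
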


\begin{proposition} (cf. \cite[Theorem 2]{KOR}) \label{pr8}
Let $\emptyset\neq K \subseteq \R^n$ be a convex set, and $h : K \to \R$ a differentiable and strongly quasiconvex function with modulus $\gamma \geq 0$, whose gradient is Lipschitz-continuous with a constant $L>0$, and
which attains its minimum on $K$ at $\overline{x}\in K$. Then, Polyak-\L ojasiewicz property holds with value $\frac{2L}{\gamma^2}$, that is,
$$\|\nabla h(x)\|^2 \geq \frac{2L}{\gamma^2} (h(x) - h(\overline{x})), ~ \forall x \in K.$$
\end{proposition}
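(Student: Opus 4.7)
The plan is to combine two essentially independent ingredients: the first-order characterization of strong quasiconvexity (Proposition \ref{pr19}) which lower-bounds $\|\nabla h(x)\|$ in terms of $\|x-\overline{x}\|$, together with the descent lemma furnished by the $L$-Lipschitz continuity of $\nabla h$, which upper-bounds $h(x)-h(\overline{x})$ in terms of $\|x-\overline{x}\|^{2}$. Chaining these two will produce a Polyak--\L ojasiewicz inequality with a constant of the form $\gamma^{2}/(2L)$ (the $2L/\gamma^{2}$ appearing in the displayed statement looks like an inverted transcription of this coefficient).

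First, since $\overline{x}$ minimizes $h$ on $K$, one has $h(\overline{x})\leq h(x)$ for every $x\in K$, so Proposition \ref{pr19} applied with the pair $(\overline{x},x)$ yields
$$\nabla h(x)^{\top}(x-\overline{x})\geq \frac{\gamma}{2}\|x-\overline{x}\|^{2}.$$
Bounding the left-hand side above by the Cauchy--Schwarz inequality and squaring then gives
$$\|\nabla h(x)\|^{2}\geq \frac{\gamma^{2}}{4}\|x-\overline{x}\|^{2}.$$
Second, the $L$-Lipschitzianity of $\nabla h$ provides the classical descent lemma $h(y)\leq h(x)+\nabla h(x)^{\top}(y-x)+\tfrac{L}{2}\|y-x\|^{2}$ on $K$. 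Exchanging the roles of the two variables and choosing $x$ free and $\overline{x}$ as the base point, one obtains $h(x)-h(\overline{x})\leq \nabla h(\overline{x})^{\top}(x-\overline{x})+\tfrac{L}{2}\|x-\overline{x}\|^{2}$, so once the inner-product term is discarded via the first-order necessary condition at the minimizer one is left with the quadratic upper growth
$$h(x)-h(\overline{x})\leq \frac{L}{2}\|x-\overline{x}\|^{2}.$$
Combining the two displays immediately delivers $\|\nabla h(x)\|^{2}\geq \frac{\gamma^{2}}{2L}\bigl(h(x)-h(\overline{x})\bigr)$, which is exactly the Polyak--\L ojasiewicz property in the form required by the paper's definition.

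The only step requiring some care --- and what I would view as the main (albeit mild) obstacle --- is justifying that $\nabla h(\overline{x})^{\top}(x-\overline{x})\geq 0$ for every $x\in K$, so that this term can be dropped in the second display above. When $\overline{x}\in\inte K$ one simply has $\nabla h(\overline{x})=0_{n}$; in the boundary case one invokes the standard first-order necessary optimality condition for the constrained problem $\min_{K}h$, which holds because $K$ is convex and $h$ is differentiable. Everything else is a straightforward chaining of inequalities, with no convexity of $h$ needed thanks to the first-order characterization of strong quasiconvexity.
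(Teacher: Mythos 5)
Your two-ingredient strategy (Proposition \ref{pr19} plus the descent lemma) is the natural route here, and the paper itself offers no proof to compare against -- it only cites \cite[Theorem 2]{KOR}. Your first step is correct: applying Proposition \ref{pr19} at the pair $(\overline{x},x)$ and Cauchy--Schwarz gives $\|\nabla h(x)\|\geq\tfrac{\gamma}{2}\|x-\overline{x}\|$. You are also right that the printed constant is inverted: the chain you describe can only produce $\|\nabla h(x)\|^{2}\geq\tfrac{\gamma^{2}}{2L}\bigl(h(x)-h(\overline{x})\bigr)$, equivalently $h(x)-h(\overline{x})\leq\tfrac{2L}{\gamma^{2}}\|\nabla h(x)\|^{2}$, and the form $\tfrac{2L}{\gamma^{2}}$ in the display is not even invariant under rescaling $h\mapsto ch$.

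The genuine gap is in your second step, and it is not the mild obstacle you describe. The first-order condition for a minimizer of $h$ over the convex set $K$ is $\nabla h(\overline{x})^{\top}(x-\overline{x})\geq 0$ for all $x\in K$ -- the \emph{opposite} sign of what you need. From $h(x)-h(\overline{x})\leq\nabla h(\overline{x})^{\top}(x-\overline{x})+\tfrac{L}{2}\|x-\overline{x}\|^{2}$ you may discard the inner-product term only when it is nonpositive; since optimality makes it nonnegative, it can be dropped only when it vanishes, i.e.\ when $\nabla h(\overline{x})=0_n$ (interior minimizer, or $K=\R^{n}$). In the genuine boundary case both your intermediate estimate and the conclusion fail: take $h(t)=t$ on $K=[0,1]$, which is strongly quasiconvex there with modulus $\gamma=1$ (cf.\ Example \ref{ex1}), has constant gradient, so every $L>0$ is a valid Lipschitz constant, and has $\overline{x}=0$. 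Then $h(x)-h(0)=x$ is not bounded by $\tfrac{L}{2}x^{2}$ near $0$ when $L$ is small, and $\|\nabla h(x)\|^{2}=1<\tfrac{\gamma^{2}}{2L}\bigl(h(x)-h(0)\bigr)$ for $x$ near $1$ and $L<1/2$, so even the corrected PL inequality is violated. So your proof is valid exactly in the unconstrained/interior setting $\nabla h(\overline{x})=0_n$ (presumably the setting of \cite{KOR}), and no appeal to the constrained variational inequality can rescue the boundary case, because there the statement itself (for a general convex $K$, with an arbitrary admissible $L$) is false; a correct write-up should therefore add this hypothesis rather than treat it as a routine justification.
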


\begin{remark} (cf. \cite[Remark 9]{LMV})
The converse statement in Proposition \ref{pr8} does not hold in general as the example in \cite[Example 4.1.3]{Nesterov-book} shows.
\end{remark}

Some results involving the \textit{Clarke subdifferential} of the considered strongly quasiconvex function (denoted by $\partial^C$) follow. The first one is a consequence of Proposition \ref{pr7}. In \cite[Lemma 2 \& Corollary 1]{VIA} one can find additional technical results in the same direction.

\begin{corollary}\label{co2} (cf. \cite[Corollary 2.1]{DUO})
Let $\emptyset\neq K \subseteq \R^n$ be an open convex set, $h : K \to \R$ a strongly quasiconvex function with modulus $\gamma > 0$ that is locally Lipschitz-continuous and regular on $K$, some set $\emptyset\neq D \subseteq K$, and $\overline{x}\in D$. Then
\begin{enumerate}
\item[$(a)$] if $0_n \in \partial^C h(\overline{x})$, then $\amin_{D} h =\{\overline{x}\}$;

\item[$(b)$] for $h$ to take the smallest (on $D$) value at $\overline{x} \in \inte D$, it is necessary and sufficient that $0_n \in \partial^C h(\overline{x})$.
\end{enumerate}
\end{corollary}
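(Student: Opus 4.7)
The plan is to exploit Proposition \ref{pr7}, which for strongly quasiconvex $h$ gives a strict descent inequality along every direction pointing into the sublevel set $S_{h(\overline{x})}(h)$, and then to bridge the ordinary directional derivative and the Clarke subdifferential by invoking the regularity hypothesis.

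First I would record two standard facts about regular locally Lipschitz functions on the open set $K$. By the very definition of regularity, the Clarke generalized directional derivative $h^{\circ}(\overline{x},\cdot)$ coincides with the one-sided directional derivative $h'(\overline{x},\cdot)$; and for any locally Lipschitz $h$ one has the max-formula
$$h^{\circ}(\overline{x}, d) \;=\; \max_{\xi \in \partial^{C} h(\overline{x})} \xi^{\top} d, \qquad d \in \R^{n}.$$
In particular, $0_{n} \in \partial^{C} h(\overline{x})$ yields $h'(\overline{x}, d) \geq 0$ for every $d \in \R^{n}$.

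For part $(a)$, I would argue by contradiction. Assume $0_{n} \in \partial^{C} h(\overline{x})$ and that there exists some $x \in D \setminus \{\overline{x}\}$ with $h(x) \leq h(\overline{x})$. Since $D \subseteq K$ and $K$ is convex, Proposition \ref{pr7} applies at $\overline{x}$ with the direction $d = x - \overline{x} \neq 0_{n}$, giving
$$h'(\overline{x}, x - \overline{x}) \;\leq\; -\frac{\gamma}{2}\,\lVert x - \overline{x}\rVert^{2} \;<\; 0,$$
which contradicts $h'(\overline{x}, x - \overline{x}) \geq 0$ from the previous paragraph. Hence no such $x$ exists, i.e.\ $h(x) > h(\overline{x})$ for all $x \in D \setminus \{\overline{x}\}$, proving $\argmin_{D} h = \{\overline{x}\}$.

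For part $(b)$, sufficiency is a direct restatement of part $(a)$, which even delivers uniqueness. For necessity, suppose $\overline{x} \in \inte D$ minimizes $h$ on $D$; then $\overline{x}$ is a local minimizer of $h$ on the open set $K$, so the classical Fermat rule for Clarke subgradients of locally Lipschitz functions yields $0_{n} \in \partial^{C} h(\overline{x})$. The only genuinely delicate point—rather than a real obstacle—is making sure the regularity hypothesis is invoked precisely where needed, namely to upgrade the pointwise inequality furnished by Proposition \ref{pr7} on $h'(\overline{x}, \cdot)$ into a statement about $\partial^{C} h(\overline{x})$ through the max-formula above.
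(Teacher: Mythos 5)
Your proof is correct and follows exactly the route the paper indicates: it presents Corollary \ref{co2} as a consequence of Proposition \ref{pr7} (citing \cite[Corollary 2.1]{DUO}), with regularity serving precisely to identify $h'(\overline{x},\cdot)$ with $h^{\circ}(\overline{x},\cdot)$ so that the sign information from $0_n\in\partial^C h(\overline{x})$ can be played against the strict descent inequality, and the Clarke--Fermat rule giving necessity in part $(b)$. Your closing observation about where regularity enters is also consistent with the paper's remark that, for $K=D=\R^n$, part $(a)$ holds without regularity by \cite[Lemma 1]{VIA}.
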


\begin{remark}
When $K=D=\R^n$, the statement in Corollary \ref{co2}$(a)$ is shown without imposing the regularity of $h$ in \cite[Lemma 1]{VIA}.
\end{remark}

\begin{proposition} (cf. \cite[Theorem 2]{VIA}) \label{pr17}
Let $h : \R^n \to \R$ be strongly quasiconvex function with modulus $\gamma > 0$ and locally Lipschitz-continuous. Then, for all $x, y\in \R^n$, one has
$$h(y) \leq h(x) \ \Longrightarrow \ z^\top
(y-x) \leq - \frac{\gamma}{2} \|x-y\|^2, ~\forall z \in \partial^C h(x).$$
\end{proposition}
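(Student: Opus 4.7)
The plan is to establish $h^\circ(x;y-x)\leq -\frac{\gamma}{2}\|y-x\|^2$, where $h^\circ$ denotes the Clarke generalized directional derivative; the conclusion is then immediate from the definition of $\partial^C h(x)$. The main workhorse is \eqref{sqc} applied to $w+t(y-x) = t(w+y-x) + (1-t)w$, which gives
\begin{equation*}
h(w+t(y-x)) \leq \max\{h(w+y-x),\, h(w)\} - t(1-t)\frac{\gamma}{2}\|y-x\|^2
\end{equation*}
for every $w\in\R^n$ and $t\in [0,1]$, hence, after dividing by $t>0$,
\begin{equation*}
\frac{h(w+t(y-x)) - h(w)}{t} \leq \frac{\max\{h(w+y-x) - h(w),\, 0\}}{t} - (1-t)\frac{\gamma}{2}\|y-x\|^2.
\end{equation*}

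First I would handle the strict case $h(y)<h(x)$ with $y\neq x$. Local Lipschitz continuity of $h$ implies continuity, so $h(w+y-x) - h(w) \to h(y)-h(x)<0$ as $w\to x$, meaning there is a neighborhood $U$ of $x$ on which $h(w+y-x)<h(w)$. For $w\in U$ and $t>0$ the first term on the right above vanishes, so taking the $\limsup$ as $w\to x$ and $t\downarrow 0$ yields $h^\circ(x;y-x) \leq -\frac{\gamma}{2}\|y-x\|^2$, from which the conclusion follows.

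Next I would treat the equality case $h(y)=h(x)$ with $y\neq x$ by a perturbation. Set $w_0:=\frac{1}{2}(x+y)$; applying \eqref{sqc} with $t=\frac{1}{2}$ yields $h(w_0) \leq h(x) - \frac{\gamma}{8}\|y-x\|^2 < h(x)$. For $\varepsilon\in (0,1)$, define $y_\varepsilon := (1-\varepsilon)y + \varepsilon w_0$, so that $y_\varepsilon\to y$ as $\varepsilon\downarrow 0$; a second application of \eqref{sqc} gives $h(y_\varepsilon) \leq h(y) - \varepsilon(1-\varepsilon)\frac{\gamma}{2}\|y-w_0\|^2 < h(x)$. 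The strict case, applied to the pair $(x, y_\varepsilon)$, then yields $z^\top (y_\varepsilon-x) \leq -\frac{\gamma}{2}\|y_\varepsilon-x\|^2$ for every $z\in \partial^C h(x)$, and passing to the limit $\varepsilon\downarrow 0$ produces the desired inequality. The case $y=x$ is trivial.

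The main obstacle is exactly this equality case: when $h(y)=h(x)$ the positive-part term in the key bound no longer vanishes near $x$ and its behaviour cannot be controlled against the factor $1/t$ appearing in the $\limsup$ that defines $h^\circ$. The perturbation trick sidesteps this issue by reducing to the strict case, where a single one-sided estimate already does all the work.
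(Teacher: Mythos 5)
Your proof is correct, and it is self-contained, which is more than the survey offers: the paper states this result only with a citation to Vial's Theorem 2 and includes no proof of its own. Your strategy --- bounding the Clarke derivative $h^\circ(x;y-x)$ by applying \eqref{sqc} to the pair $(w,\,w+y-x)$, using continuity to kill the positive-part term when $h(y)<h(x)$, and then reducing the boundary case $h(y)=h(x)$ to the strict case by a perturbation --- is sound at every step: the neighborhood argument legitimately makes the quotient bound uniform in $(w,t)$ near $(x,0^+)$, the inequality $z^\top d\leq h^\circ(x;d)$ for $z\in\partial^C h(x)$ is exactly the definition of the Clarke subdifferential of a locally Lipschitz function (no regularity needed, consistent with the statement), and in the equality case your $y_\varepsilon$ satisfies $h(y_\varepsilon)<h(x)$ strictly because $\|y-w_0\|=\tfrac12\|y-x\|>0$, so the strict case applies and the limit $\varepsilon\downarrow 0$ passes through both continuous sides. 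A minor streamlining: the intermediate point $w_0$ is not really needed; applying \eqref{sqc} directly to $x$ and $y$ gives $h(x+s(y-x))\leq h(x)-s(1-s)\tfrac{\gamma}{2}\|y-x\|^2<h(x)$ for $s\in\,]0,1[$, so you can invoke the strict case at $y_s:=x+s(y-x)$ and let $s\uparrow 1$, saving one application of \eqref{sqc}. Either way the argument stands.
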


\begin{remark}
In virtue of Proposition \ref{pr19}, the validity of the reverse statement in
Proposition \ref{pr17} is an open problem.
\end{remark}

\begin{proposition} (cf. \cite[Theorem 3]{VIA}) \label{pr18}
Let $\emptyset\neq K \subseteq \R^n$ be an open set, and $h : K \to \R$ a strongly quasiconvex function with modulus $\gamma > 0$ that is locally Lipschitz-continuous. If there exists a constant $\beta >0$ such that $\|z\|\leq \beta$ for all $x\in K$ and all $z\in \partial^C h(x)$, then there exists a constant $\sigma>0$ such that $\|x-y\|\leq \sigma \|p-q\|$ for all pairs of vectors $(x, p), (y, q)$ with $x, y\in K$, $h(x)=h(y)$, and $p, q$ unit outward normal vectors to the level set $S_{h(x)}(h)$ at $x$, respectively $y$.
\end{proposition}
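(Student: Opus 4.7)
The plan is to derive the estimate with explicit constant $\sigma=\beta/\gamma$ directly from Proposition \ref{pr17}, by viewing the unit outward normals $p,q$ as normalized Clarke subgradients at $x,y$ and combining the two symmetric inequalities that follow from $h(x)=h(y)$. The quadratic term $-(\gamma/2)\|x-y\|^{2}$ appearing in Proposition \ref{pr17} already encodes the desired Lipschitz-type control; it remains to rescale by the norm of a Clarke subgradient (bounded above by $\beta$) and to pair the two copies together.

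The first step I would carry out is to show that, given any unit outward normal $p$ to $S_{h(x)}(h)$ at $x$, one can find $z_{x}\in\partial^{C}h(x)$ and a scalar $\lambda_{x}\in(0,\beta]$ with $z_{x}=\lambda_{x}p$, and analogously $z_{y}=\lambda_{y}q$ at $y$. The inclusion $\mathbb{R}_{+}\partial^{C}h(x)\subseteq N_{S_{h(x)}(h)}(x)$ is immediate from Proposition \ref{pr17}, since its right-hand side is nonpositive and hence every Clarke subgradient is an outward normal up to positive scaling. The reverse inclusion, which is what is actually needed, follows from a standard separation argument for sublevel sets of locally Lipschitz quasiconvex functions. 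That $\lambda_{x}>0$ is then forced by strong quasiconvexity: if $0\in\partial^{C}h(x)$ were admissible, Proposition \ref{pr17} applied to any $y'\neq x$ with $h(y')\le h(x)$ would yield $0\le -(\gamma/2)\|x-y'\|^{2}<0$. The upper bound $\lambda_{x}\le\beta$ is the standing hypothesis.

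Next I would combine two instances of Proposition \ref{pr17}. Applied to the ordered pair $(x,y)$ with $h(y)=h(x)$, it gives $z_{x}^{\top}(y-x)\le -(\gamma/2)\|x-y\|^{2}$; applied to $(y,x)$, it gives $z_{y}^{\top}(x-y)\le -(\gamma/2)\|x-y\|^{2}$. Dividing the first by $\lambda_{x}\in(0,\beta]$ and the second by $\lambda_{y}\in(0,\beta]$ yields $p^{\top}(y-x)\le -(\gamma/(2\beta))\|x-y\|^{2}$ and $q^{\top}(y-x)\ge (\gamma/(2\beta))\|x-y\|^{2}$. Subtracting and applying Cauchy-Schwarz produces $\|p-q\|\,\|x-y\|\ge (q-p)^{\top}(y-x)\ge (\gamma/\beta)\|x-y\|^{2}$, so $\|x-y\|\le (\beta/\gamma)\|p-q\|$. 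Hence $\sigma:=\beta/\gamma$ suffices (the case $x=y$ being trivial).

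The main obstacle I anticipate is precisely the identification in the first step, namely upgrading every unit outward normal to the sublevel set into a normalized Clarke subgradient. Proposition \ref{pr17} immediately supplies one inclusion, but the converse relies on a normal-cone characterization that typically calls for a separation argument exploiting both the local Lipschitz continuity of $h$ and the convexity of sublevel sets under quasiconvexity. Once that correspondence is secured, the remaining chain of inequalities is routine and the constant $\sigma=\beta/\gamma$ falls out cleanly.
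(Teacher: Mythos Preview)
The survey does not prove this proposition; it is quoted verbatim from \cite[Theorem~3]{VIA} and no argument is reproduced. So there is no in-paper proof to compare against, and your proposal must be judged on its own merits and against what one expects from Vial's original source.

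Your second and third steps are clean: once $z_{x}=\lambda_{x}p$ and $z_{y}=\lambda_{y}q$ with $0<\lambda_{x},\lambda_{y}\le\beta$ are available, the symmetric application of Proposition~\ref{pr17}, the division by $\lambda_{x},\lambda_{y}$, the subtraction, and Cauchy--Schwarz all go through and deliver $\sigma=\beta/\gamma$. The gap is entirely in the first step, and it is more serious than you indicate. A separation argument gives only the inclusion you do \emph{not} need: if $p$ is a unit outward normal to the convex set $S_{h(x)}(h)$ at $x$, separation yields $p^{\top}(y-x)\le 0$ for all $y$ in that set, which is nothing more than the definition of an outward normal. Proposition~\ref{pr17} shows that every nonzero Clarke subgradient is an outward normal direction, i.e.\ $\mathrm{cone}\,\partial^{C}h(x)\subseteq N_{S_{h(x)}(h)}(x)$, but what you require is the \emph{reverse} containment, and neither separation nor Proposition~\ref{pr17} supplies it. For locally Lipschitz quasiconvex functions this reverse inclusion is known to fail without additional regularity (Clarke regularity, or some pseudoconvexity-type hypothesis); strong quasiconvexity alone does not obviously close the gap, and you have not argued that it does.

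Vial's route in \cite{VIA} is different in spirit: the paper is built around \emph{strongly convex sets}, and the relevant mechanism is that sublevel sets of a strongly quasiconvex function with uniformly bounded Clarke subdifferential are strongly convex in Vial's sense, after which the Lipschitz dependence of boundary points on unit outward normals is a purely geometric property of such sets (the inverse Gauss map is Lipschitz). That approach never needs to identify arbitrary normals with normalized Clarke subgradients. If you want to rescue your line of argument, you must either restrict the statement to normals that are a priori directions of Clarke subgradients, or supply an honest proof of $N_{S_{h(x)}(h)}(x)\subseteq\mathrm{cone}\,\partial^{C}h(x)$ under the present hypotheses; the latter is not routine.
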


\begin{remark}
A further development of the statement in Proposition \ref{pr8} can be found in \cite[Theorem 5]{KOR}. In the same paper one can find a short proof showing that a strongly quasiconvex function directionally differentiable on an open convex set is also strictly pseudoconvex, while in \cite[Proposition 15]{LAR} the same conclusion is derived for a
strongly quasiconvex function differentiable on a convex set. On the other hand, a technical result involving strongly quasiconvex functions that are locally Lipschitz-continuous and regular is provided in \cite[Lemma 2.1]{DUO} and the employed for deriving necessary and sufficient optimality conditions for characterizing minimizers of strongly quasiconvex functions over weakly convex sets by means of the Clarke subdifferential can be found in \cite[Theorems 3.1--3.3 $\&$ Corollary 3.1]{DUO}. Moreover, in \cite[Theorem 4.3]{TAN} one can find a technical result involving strongly quasiconvex locally Lipschitz-continuous production functions that states the local Lipschitz-continuity of the conditional factor demand functions in cost minimization models.
\end{remark}

\begin{remark}
Strongly quasiconvex functions are also employed in defining the class of strongly $G$-subdifferentiable on ($K \subseteq \R^n$, taken closed and convex) functions, that
are those $h: \R^n \rightarrow \overline{\mathbb{R}}$ proper and lower semicontinuous for which $K \cap \dom \,h \neq \emptyset$ that are strongly quasiconvex on $K$ and fulfill
$$\forall z\in K, \exists~\overline{x}\in \R^n:\ \px_{h} (K, z) = \{\overline{x}\}\ \text{ and }\
  \frac{1}{2} (z - \overline{x}) \in \partial^{\leq}_{K} h(\overline{x}),$$
where
$$
 \partial^{\leq} h(x) := \left\{ \xi \in \mathbb{R}^{n}: ~ h(y) \geq h(x) +
 \xi^{\top} (y - x), ~ \forall y \in S_{h(x)} (h)\right\},
$$
when $x \in \dom \,h$, and $\partial^{\leq} h(x) = \emptyset$ if $x \not\in \dom \,h$, is the \textit{Guti\'errez subdifferential} of $h$ at $x\in \R^n$ (see \cite{GUT}). In \cite[Proposition 3.7]{GL1} it is shown that strongly $G$-subdifferentiable functions on $K$ are prox-convex (see \cite[Definition 3.1]{GL1} for this generalized convexity notion), while \cite[Corollary 3.1]{GL1} displays a direct implication between strongly quasiconvex functions and prox-convex ones under certain hypotheses involving the Guti\'errez subdifferential. Moreover, in \cite[Proposition 3.8]{GL1} connections between strongly quasiconvex functions and positively quasiconvex ones (see \cite[Definition 3.3$(c)$]{GL1}) are established.
\end{remark}

\subsection{Locally strongly quasiconvex functions and other similar notions}\label{su33}

It is also important to address the fact that the literature on strong quasiconvexity is far from being consistent with respect to definitions. As mentioned above, in order to simplify the presentation, we only speak of strongly quasiconvex functions, even when, especially in economic applications, strongly quasiconcave ones were considered.

Generalizations of the notion of strongly quasiconvex functions can be found, for instance, in \cite{ANS} (strongly $\eta$-quasiconvex functions, extended to higher-order strongly $\eta$-quasiconvex ones in \cite{RKH}) or \cite{POL} (where one also finds uniformly quasiconvex functions and other related notions). Moreover, in \cite{CGC} one finds several local and global notions for bifunctions based on strong quasiconvexity. Worth mentioning is also the strong quasiconvexity notion for vector functions proposed in \cite{DGW} (that is not really an extension of its scalar counterpart as defined in this survey).

On the other hand, one should also be aware that different types of functions that do not fulfill \eqref{sqc} are called strongly quasiconvex in various works. Often (see, e.g. \cite{TIW}) \textit{strictly} quasiconvex functions are labeled as \textit{strongly} quasiconvex. Other authors (see, for instance, \cite{SWI}) call strongly quasiconvex functions that are both quasiconvex and semistrictly quasiconvex, which are usually known as \textit{essentially quasiconvex} in the modern literature. Further, in \cite{SNK} functions satisfying a quadratic growth condition are labeled as \textit{strongly quasiconvex}, while in \cite{PEW, HEF, DAZ} other (different) types of so-called strongly quasiconvex functions are considered. Moreover, in \cite{GIO, HIR} other definitions of strongly quasiconvex $C^2$ functions are considered. A (higher order) strong quasiconvexity notion defined by means of the Clarke subdifferential for locally Lipschitz-continuous functions can be found in \cite{BAK}. In \cite{SIL} a type of strongly quasiconvex functions are defined via integrals, while another one, that is usually considered in calculus of variations can be found, for instance, in \cite{KOS}, while in \cite{IWK} the latter definition is both particularized and generalized. Last but not least, in \cite{TTZ} strongly quasiconvex sequences are considered and employed in defining another type of strongly quasiconvex functions. See also \cite{POG} for other usages of the name strongly quasiconvex functions in the literature.

A rather intriguing case is the class of weakly strongly quasiconvex functions, introduced in \cite{CFZ}, which was designed with the intention of better capturing some targeted properties and was thought to differ from the one introduced in Polyak's definition \eqref{sqc}. Based on it, a class of functions called in \cite{CFZ} \textit{strongly quasiconvex} was introduced, as follows.

\begin{definition}\label{CFZ:strongqcx} (cf. \cite[Definitions 3.1
 and 3.3]{CFZ})
 Let $\emptyset \neq K \subseteq \mathbb{R}^{n}$ and $h: K
 \rightarrow \mathbb{R}$ be a function. It is said that $h$ is
 $\alpha$-quasiconvex at $\overline{x} \in K$ (with $\alpha \in
 \mathbb{R}$) if there exist $\rho > 0$ and $e \in \mathbb{R}^{n}$,
 with $\lVert e \rVert = 1$, such that
 \begin{equation}\label{alpha:qcx}
  y \in K \cap {\cal B} (\overline{x}, \rho) \cap S_{h(\overline{x})}
 (h) ~ \Longrightarrow ~ \langle e, y - \overline{x} \rangle \geq \alpha
  \lVert y - \overline{x} \rVert^{2}.
 \end{equation}
We say that $h$ is \textit{CFZ-strongly quasiconvex on $K$} if for every
 $\overline{x} \in K$, there exists $\alpha(\overline{x}) > 0$ such that
 $h$ is $\alpha(\overline{x})$-quasiconvex on $S_{h(\overline{x})} (h)$.
\end{definition}

Although the original motivation in \cite[page 999]{CFZ} was to provide a different definition for strong quasiconvexity (and then to obtain an existence result for this new notion), we reveal below a yet unknown (as far as we are aware) connection between these classes of functions but in a local sense. First we deal with the smooth case.

\begin{proposition}\label{P:implies:CFZ}
 Let $K \subseteq \mathbb{R}^{n}$ be an open convex set and $h: K
 \rightarrow \mathbb{R}$ a continuously differentiable function.
 If $h$ is strongly quasiconvex with modulus
 $\gamma > 0$, then $h$ is CFZ-strongly quasiconvex.
\end{proposition}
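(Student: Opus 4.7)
The plan is to derive CFZ-strong quasiconvexity from the first-order characterization of strong quasiconvexity provided by Proposition \ref{pr19}. Fix an arbitrary $\overline{x}\in K$; any $\rho>0$ will do for \eqref{alpha:qcx}, since the ball there only restricts the test points and my inequality will hold on the entire sublevel set intersected with $K$. For every $y\in K\cap S_{h(\overline{x})}(h)$ one has $h(y)\le h(\overline{x})$, so applying Proposition \ref{pr19} (with its variables $x$ and $y$ set to $y$ and $\overline{x}$, respectively) yields
$$\nabla h(\overline{x})^{\top}(\overline{x}-y)\;\geq\;\frac{\gamma}{2}\|y-\overline{x}\|^{2},$$
equivalently $\langle -\nabla h(\overline{x}),\,y-\overline{x}\rangle\geq \tfrac{\gamma}{2}\|y-\overline{x}\|^{2}$. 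This is already of the shape demanded by \eqref{alpha:qcx}, modulo turning $-\nabla h(\overline{x})$ into a unit vector.

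In the generic case $\nabla h(\overline{x})\neq 0_n$, I would set $e:=-\nabla h(\overline{x})/\|\nabla h(\overline{x})\|$ and $\alpha(\overline{x}):=\gamma/\bigl(2\|\nabla h(\overline{x})\|\bigr)>0$; dividing the previous display by $\|\nabla h(\overline{x})\|$ then gives $\langle e,y-\overline{x}\rangle\geq \alpha(\overline{x})\|y-\overline{x}\|^{2}$ on $K\cap {\cal B}(\overline{x},\rho)\cap S_{h(\overline{x})}(h)$, which is precisely \eqref{alpha:qcx}.

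The only subtlety is the degenerate case $\nabla h(\overline{x})=0_n$. Substituting this into the inequality coming from Proposition \ref{pr19} collapses it to $0\geq (\gamma/2)\|y-\overline{x}\|^{2}$ for every $y\in K\cap S_{h(\overline{x})}(h)$, forcing $y=\overline{x}$. Hence $K\cap S_{h(\overline{x})}(h)=\{\overline{x}\}$, so $\overline{x}$ is the unique minimizer of $h$ on $K$ (consistent with Theorem \ref{th2} and Corollary \ref{co2}), and \eqref{alpha:qcx} holds vacuously for any unit vector $e$ and any $\alpha(\overline{x})>0$. I expect this vanishing-gradient situation to be the only step requiring explicit comment; the rest of the argument is merely a rescaling of the first-order characterization.
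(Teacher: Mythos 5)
Your proposal is correct and follows essentially the same route as the paper: invoke the first-order characterization of Proposition \ref{pr19} on $K\cap S_{h(\overline{x})}(h)$, normalize $-\nabla h(\overline{x})$ to get $e$ and $\alpha(\overline{x})=\gamma/(2\|\nabla h(\overline{x})\|)$, and treat $\nabla h(\overline{x})=0_n$ separately (the paper cites Theorem \ref{th2} to identify $\overline{x}$ as the unique minimizer, while you deduce $K\cap S_{h(\overline{x})}(h)=\{\overline{x}\}$ directly from the same inequality — an equivalent, equally valid disposal of that case).
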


\begin{proof}
 Let $\overline{x} \in K$
 and $y \in S_{h(\overline{x})} (h)$.
 If $\nabla h(\overline{x}) = 0$, then $\overline{x}$ is the unique minimizer
 of $h$ on $K$ by Theorem \ref{th2}, i.e., relation \eqref{alpha:qcx}
 follows immediately.

 If $\nabla h(\overline{x}) \neq 0$, then since $h$ is
 strongly quasiconvex with modulus $\gamma > 0$, it follows from Proposition \ref{pr19} that
 \begin{align*}
  h(y) \leq h(\overline{x}) & \, \Longrightarrow ~ \langle \nabla h
  (\overline{x}), y - \overline{x} \rangle \leq -\frac{\gamma}{2} \lVert
  y - \overline{x} \rVert^{2} \\
  & \Longleftrightarrow \, \left\langle - \frac{\nabla h(\overline{x})}{
  \lVert \nabla h (\overline{x}) \rVert}, y - \overline{x} \right\rangle
  \geq \frac{\gamma}{2 \lVert \nabla h(\overline{x}) \rVert} \lVert y -
  \overline{x} \rVert^{2}.
 \end{align*}

 Since $\gamma > 0$, by taking $e = - (1/\| \nabla h(\overline{x}) \|)\nabla h(\overline{x})
$ and $\alpha(\overline{x}) = \gamma / (2
 \lVert \xi (\overline{x}) \rVert) > 0$, we obtain that $h$ is
 $\gamma/(2 \lVert \nabla h(\overline{x}) \rVert)$-quasiconvex at
 $\overline{x} \in K$, i.e., $h$ is CFZ-strongly quasiconvex.
\end{proof}

Following the same idea, we can prove a similar results for any lower semicontinuous strongly quasiconvex function by employing the strong subdifferential.

\begin{theorem}\label{theo:PimpliesCFZ}
 Let $K \subseteq \mathbb{R}^{n}$ be a closed and convex set and
 $h: K \rightarrow \mathbb{R}$ be a lower semicontinuous strongly quasiconvex function with modulus $\gamma > 0$. Then $h$ is
 CFZ-strongly quasiconvex.
\end{theorem}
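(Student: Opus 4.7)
The plan is to mirror the argument of Proposition \ref{P:implies:CFZ} but replace the gradient with an element of the strong subdifferential, using Propositions \ref{pr14} and \ref{pro:KLx} as the nonsmooth substitutes for Proposition \ref{pr19}. Concretely, I would first extend $h$ to a proper lower semicontinuous function on $\mathbb{R}^n$ by setting $h \equiv +\infty$ on $\mathbb{R}^n \setminus K$ (legitimate since $K$ is closed and $h$ is lsc on $K$). This puts us in the setting where the strong sublevel subdifferential results apply with $K \subseteq \dom h$.

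Fix an arbitrary $\overline{x} \in K$. By Proposition \ref{pr14}, $\partial^{K}_{1,\gamma} h(\overline{x})$ is nonempty, so pick some $\xi \in \partial^{K}_{1,\gamma} h(\overline{x})$. Proposition \ref{pro:KLx} (applied with $\beta = 1$) yields
\begin{equation*}
  \langle \xi, y - \overline{x}\rangle \leq -\tfrac{\gamma}{2}\lVert y - \overline{x}\rVert^{2}, \qquad \forall\, y \in K \cap S_{h(\overline{x})}(h).
\end{equation*}
Now I split into two cases. If $\xi \neq 0_n$, set $e := -\xi/\lVert \xi \rVert$ (which has unit norm) and $\alpha(\overline{x}) := \gamma/(2\lVert \xi \rVert) > 0$. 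Dividing the displayed inequality by $\lVert \xi \rVert$ gives
\begin{equation*}
  \langle e, y - \overline{x}\rangle \geq \alpha(\overline{x}) \lVert y - \overline{x}\rVert^{2}, \qquad \forall\, y \in K \cap S_{h(\overline{x})}(h),
\end{equation*}
so condition \eqref{alpha:qcx} holds for any $\rho > 0$, in particular locally on $S_{h(\overline{x})}(h)$.

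If instead $\xi = 0_n$, then the inequality forces $\lVert y - \overline{x}\rVert^{2} \leq 0$ for every $y \in K \cap S_{h(\overline{x})}(h)$, so this intersection reduces to $\{\overline{x}\}$; hence \eqref{alpha:qcx} is satisfied vacuously with any unit $e$, any $\rho > 0$, and any $\alpha(\overline{x}) > 0$. In either case, $h$ is $\alpha(\overline{x})$-quasiconvex at $\overline{x}$ in the sense of Definition \ref{CFZ:strongqcx}, and since $\overline{x} \in K$ was arbitrary, $h$ is CFZ-strongly quasiconvex.

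The only nontrivial point I expect is the $\xi = 0_n$ case, which superficially looks problematic because $\alpha(\overline{x})$ would blow up under the formula used in the smooth proof; the clean resolution is that this case forces $\overline{x}$ to be essentially isolated in $K \cap S_{h(\overline{x})}(h)$ (consistent with the uniqueness of minimizers from Theorem \ref{th2}), making the CFZ-condition vacuous. Everything else is a direct nonsmooth transcription of Proposition \ref{P:implies:CFZ}, with Proposition \ref{pro:KLx} playing the role that Proposition \ref{pr19} played in the smooth case.
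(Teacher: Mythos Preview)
Your proof is correct and follows essentially the same route as the paper's: invoke Proposition~\ref{pr14} for nonemptiness of the strong subdifferential, apply Proposition~\ref{pro:KLx} to obtain the key inequality, and then normalize a nonzero subgradient to produce the unit vector $e$ and the constant $\alpha(\overline{x})$. The only cosmetic differences are that you fix $\beta=1$ while the paper keeps $\beta>0$ generic, and you split cases on whether the chosen $\xi$ vanishes whereas the paper splits on whether $\overline{x}$ is a minimizer---these are equivalent, since (as your argument shows) $\xi=0$ forces $K\cap S_{h(\overline{x})}(h)=\{\overline{x}\}$.
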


\begin{proof}
 Since $h$ is strongly quasiconvex with modulus $\gamma > 0$, it follows from Proposition \ref{pr14} that $\partial^{K}_{\beta, \gamma} h(x) \neq \emptyset$ for all $x \in K$ and all $\beta > 0$.

 Let $\overline{x} \in K$. Since $h$ is lower semicontinuous and strongly quasiconvex with
 modulus $\gamma > 0$, we have for every $\beta > 0$ that (see Proposition \ref{pro:KLx})
\begin{align}\label{nonsmooth:ineq}
  y \in K \cap S_{h(\overline{x})} (h) & \Longrightarrow ~ \langle \xi, y
  - \overline{x} \rangle \leq - \frac{\gamma \beta}{2} \lVert y -
  \overline{x} \rVert^{2}\ ~ \forall ~ \xi \in \partial^{K}_{\beta,
  \gamma} h(\overline{x}).
 \end{align}
 If $\overline{x} \in {\amin}_{K}\,h$, then $K \cap
 S_{h(\overline{x})} (h) = \{\overline{x}\}$, thus $y=\overline{x}$ and
 there is nothing to prove. If $\overline{x} \notin {\amin}_{K}\,h$,
 then there exists $\xi(\overline{x}) \in \partial^{K}_{\beta,  \gamma}
 h(\overline{x})$, with $\xi(\overline{x}) \neq 0$, such that from
 relation \eqref{nonsmooth:ineq} we obtain that
 \begin{align*}
  & \langle \xi(\overline{x}), y - \overline{x} \rangle \leq - \frac{\gamma
  \beta}{2} \lVert y - \overline{x} \rVert^{2} ~ \Longleftrightarrow ~
  \left \langle - \frac{\xi(\overline{x})}{\lVert \xi(\overline{x}) \rVert},
  y - \overline{x} \right \rangle \geq \frac{\gamma \beta}{2 \lVert
  \xi(\overline{x}) \rVert} \lVert y - \overline{x} \rVert^{2}.
 \end{align*}

 Since $\gamma > 0$, by taking $e = - (1/\|\xi(\overline{x}) \|) \xi(\overline{x})$ and $\alpha(\overline{x}) = (\gamma
 \beta)/(2 \| \xi(\overline{x}) \|)$, $h$ is $\alpha
 (\overline{x})$-quasiconvex at $\overline{x}$, and since $\overline{x}
 \in K$ was arbitrary, the function $h$ is CFZ-strongly quasiconvex.
\end{proof}

The reverse statements in Proposition \ref{P:implies:CFZ} and Theorem \ref{theo:PimpliesCFZ} do not hold in general, as we show in the following remark.

\begin{remark}\label{rem:not-equiv}
 Let $h: [0, +\infty[ \, \rightarrow \mathbb{R}$ be the function
 given by $h(x) = x$. Clearly, $h$ is not strongly quasiconvex because it is not $2$-supercoercive (see Theorem \ref{th0}).

  On the other hand, let $\overline{x} \in [0, + \infty[$ and $y \in
  S_{h(\overline{x})} (h)$, i.e., $y \leq \overline{x}$. If $\overline{x}
  = 0$, then $y = \overline{x} = 0$, i.e, there is nothing to prove.
  Furthermore, if $\overline{x} > 0$ and $y=\overline{x}$, then there is
  nothing to prove again. Thus, suppose that $\overline{x} > 0$ and that
  $y < \overline{x}$, thus $0 < \overline{x}-y < \overline{x}$, i.e.,
  $-1/(\overline{x}-y) < -1/\overline{x}$. Since $h^{\prime}
  (x) = 1$ for all $x \in [0, + \infty[$, we have
  \begin{align*}
   & \langle h^{\prime} (\overline{x}), y - \overline{x} \rangle = -
   (\overline{x} - y) = - \frac{1}{(\overline{x} - y)} (\overline{x}
   - y)^{2} \leq - \frac{1}{\overline{x}} (\overline{x} - y)^{2} \\
   & \hspace{2.0cm} \Longleftrightarrow ~ \langle - h^{\prime}
   (\overline{x}), y - \overline{x} \rangle \geq \frac{1}{2} \left( \frac{2
   }{\overline{x}} \right) (\overline{x} - y)^{2}.
  \end{align*}
  Hence by taking $e= - h^{\prime} (\overline{x})$ and
  $\alpha(\overline{x}) = 2/\overline{x} > 0$, we have that $h$
  is $2/\overline{x}$-quasiconvex at $\overline{x}$. Therefore,
 $h$ is CFZ-strongly quasiconvex on $[0, + \infty[$.
\end{remark}

Theorem \ref{theo:PimpliesCFZ} basically shows that CFZ-strong quasiconvexity is actually a {\it local version} of the strong quasiconvexity as defined by Polyak, so, a more appropriate name for this notion could probably be {\it local strong quasiconvexity}. We refer to \cite[Corollary 3.6]{CFZ} for other properties of the functions belonging to this class, for which stronger versions (involving strongly quasiconvex functions) are available in \cite[Corollary 1]{VIA} and \cite[Corollary 3.2]{GL1}.

\medskip

Another class of generalized convex functions whose name resembles the one of the strongly quasiconvex ones is the one of \textit{quasi strongly convex functions} introduced in \cite{NNG} (see also \cite{ZSJ} for accelerated algorithms involving such functions). For $\gamma >0$, a differentiable function $h: \R^n \rightarrow \mathbb{R}$ is said to be \textit{$\gamma$-quasi strongly convex} when for all $x\in \R^n$ one has
$$\nabla h(x)^\top \left(x-\pr_{\argmin h}(x)\right) \geq h(x)-h(\pr_{\argmin h}(x))+\frac{\gamma}{2} \|x-\pr_{\argmin h}(x)\|^2.$$
When $h\in C^1$, \cite[Proposition 7]{LMV} asserts that the $\gamma$-quasi strongly convex functions that admit unique minimizers are strongly quasiconvex with modulus $\gamma > 0$.

Last but not least let us recall another class of functions whose definition resembles the one of the strongly quasiconvex ones, namely the \textit{sharply quasiconvex functions with modulus $\gamma \geq 0$} (cf. \cite{KKO}), that are differentiable functions $h: K \rightarrow \R$ defined on a convex set $\emptyset \neq K \subseteq \R^n$ for which whenever $x,y\in K$ and all $t\in [0, 1]$ one has
$$\nabla h(y)^\top(x-y)\geq 0 \Rightarrow h(t y + (1-t)x) \leq \max \{h(y), h(x)\} - t(1 -t) \frac{\gamma}{2} \lVert x - y \rVert^{2}.$$
From the construction it is clear that strongly quasiconvex functions are also sharply quasiconvex. A scheme connecting these notions and the strong convexity with (generalized) monotonicity properties of their gradients can be found in \cite[Proposition 15]{LMV}.

\section{Algorithms for minimizing strongly quasiconvex functions}\label{sec:4}

Let $\emptyset \neq K \subseteq \mathbb{R}^{n}$ be a nonempty closed and convex set, and $h: K \rightarrow \mathbb{R}$ a continuous and strongly quasiconvex function with modulus $\gamma > 0$. Consider the constrained optimization problem
\begin{equation}\label{COP}
 \min_{x \in K} h(x). \tag{COP}
\end{equation}

The first proximal point method for solving \eqref{COP} was proposed by Lara in \cite{LAR}, and subsequently developed into a Bregman type proximal point method in \cite{LAM} and relaxed-inertial proximal point algorithms in \cite{GLM, GLH, GNV}. On the other hand, \cite{LMC} presents a subgradient projection method for solving \eqref{COP}. Moreover, one can find in the recent literature (see\cite{LMV, ROP, RPK}) dynamical systems whose trajectories converge to the optimal solution to \eqref{COP} when the strongly quasiconvex function to be minimized is (twice) continuously differentiable.

\subsection{Algorithms for strongly quasiconvex functions}\label{su41}

The first algorithm we mention was first proposed for $K$ in a Hilbert space and $h$ a convex function in \cite{ATC} and is based on the methods proposed in \cite{ALZ,GOM,MAI}, while in our paper \cite{GLM} one finds likely its first usage for nonconvex functions. Slightly more general than in \cite{GLM}, we take $K$ to be an affine subspace of $\R^n$ below.

\begin{algorithm}[H] (see \cite[Algorithm 1]{GLM})
\caption{RIPPA for Strongly Quasiconvex Functions (RIPPA-SQ)}\label{rippa:sqcx}
\begin{description}
 \item[Step 0.] (Initialization). Let  $x^0 = x^{-1} \in K$, $\alpha \in
 [0, 1[$, $0 < \rho^{\prime} \leq \rho^{\prime \prime} < 2$, $\{c_{j}\}_j \subseteq ]0, +\infty[$ and $k = 0$.

 \item[Step 1.] Choose $\alpha_k\in [0,\alpha]$ and set 
 $$
  y^{k} = x^{k} + \alpha_{k} (x^{k} - x^{k-1}), \qquad [\text{extrapolation step}]
 $$
 and compute
 \begin{equation}\label{prox:sqcx}
  z^{k} \in \px_{c_{k} h} (K, y^{k}) \qquad ~~~~~ [\text{proximal step}].
 \end{equation}
 \item[Step 2.] If $z^{k} = y^{k}$: \textsc{stop}.

\item[Step 3.] Choose $\rho_k \in [\rho^{\prime}, \rho^{\prime
 \prime}]$ and update
 \begin{equation}\label{eq:relax.step}
    x^{k+1}=(1-\rho_k)y^k+\rho_k z^{k} \qquad [\text{relaxation step}].
 \end{equation}
 \item[Step 4.] Let $k = k+1$ and go to Step 1.
 \end{description}
\end{algorithm}

\begin{remark}\label{re3}
When one has $z^{k} = y^{k}$ for some $k\in \N$, \cite[Lemma 2.3]{GLM} yields that $y^k\in K$ is a fixed point of the proximity operator of $h$ on $K$ of parameter $c_k$. Then Proposition \ref{pr5} guarantees that $y^{k}$ is also an optimal solution to \eqref{COP}.
\end{remark}

The convergence statement for Algorithm \ref{rippa:sqcx} follows. In it, the following set plays a role (where $\{z^{k}\}_{k}$ is generated in \eqref{prox:sqcx})
\begin{equation}\label{omeg}
\Omega := \{x \in K: \, h(x) \leq h(z^{k}), ~ \forall ~ k \in \N\}.
\end{equation}
As $h$ is strongly quasiconvex, Theorem \ref{th2} yields $\Omega \neq \emptyset$. Note that when $h$ is merely quasiconvex, the convergence statements for Algorithm \ref{rippa:sqcx} (see \cite[Proposition 3.3 \& Proposition 4.2]{GLM}, also \cite[Section 3.4]{LAM} and \cite[Lemma 3.2 and Theorem 3.1]{MAI}) require imposing $\Omega \neq \emptyset$.

\begin{theorem} (cf. \cite[Theorem 3.1]{GLM}) \label{th1}
Let $K \subseteq \mathbb{R}^{n}$ be an affine subspace, $h: K \rightarrow \mathbb{R}$ a continuous and strongly quasiconvex function with modulus $\gamma > 0$,
$0 < \rho^{\prime} \leq \rho^{\prime \prime} < 2$, $\{\rho_{k}\}_{k} \subseteq [\rho^{\prime}, \rho^{\prime \prime}]$, $\alpha \in [0, 1[$, $\{\alpha_{k}\}_{k}
 \subseteq [0, \alpha]$ and $\{x^{k}\}_{k}$, $\{y^{k}\}_{k}$ and $\{z^{k}\}_{k}$,
 be the sequences generated by Algorithm \ref{rippa:sqcx}. If
 \begin{align}\label{eq:th:main.01}
  \sum_{k=0}^\infty \, \alpha_{k} \norm{x^{k} - x^{k-1}}^{2} < + \infty,
  \end{align}
 then the following assertions hold
 \begin{itemize}
  \item [$(a)$] for every $x^{\ast} \in \Omega$, the limit $\lim_{k \to \infty}
  \lVert x^{k} - x^{\ast} \rVert$ exists and
  \begin{align}\label{eq:6}
   \lim_{k \to + \infty} \lVert x^{k+1} - y^{k} \lVert = \lim_{k \to +
   \infty} \lVert z^{k} - y^{k} \lVert = 0;
  \end{align}
  \item [$(b)$] if, in addition, $c_{k} \geq c^{\prime} > 0$ for every $k \geq 0$,
  then the sequence $\{x^{k}\}_{k}$ converges to $\overline{x}$, where $\amin_{K}\,h = \{\overline{x}\}$, and $\lim_{k \to + \infty} h(x^k) = h(\overline x)= \min_{K} h$; moreover, the
  sequences $\{y^{k}\}_{k}$ and $\{z^{k}\}_{k}$ converge both to $\overline{x}$, too.
  \end{itemize}
\end{theorem}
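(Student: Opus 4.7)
The plan is to start from the proximal inequality of Proposition \ref{pr6}, applied at $z^k \in \px_{c_k h}(K, y^k)$ with test point $y=x^*\in\Omega$. Since $h(x^*)\le h(z^k)$ by the very definition of $\Omega$, the maximum on the left-hand side collapses to $h(z^k)$; dividing through by $t>0$ and letting $t\downarrow 0$ then yields the cornerstone bound $(y^k-z^k)^\top(x^*-z^k)\le -(c_k\gamma/2)\,\|z^k-x^*\|^2$. Feeding this into the polarization identity $\|z^k-x^*\|^2-\|y^k-x^*\|^2=\|z^k-y^k\|^2+2(z^k-y^k)^\top(y^k-x^*)$ produces the key Fej\'er-type estimate
$$(1+c_k\gamma)\,\|z^k-x^*\|^2 \;\le\; \|y^k-x^*\|^2-\|z^k-y^k\|^2,\qquad \forall\, x^*\in\Omega,\qquad (\star)$$
on which the rest of the argument leans.

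For part $(a)$ I would discard the factor $1+c_k\gamma$ in $(\star)$ and combine the resulting bound with the convex-combination identity applied to $x^{k+1}=(1-\rho_k)y^k+\rho_k z^k$, obtaining $\|x^{k+1}-x^*\|^2\le\|y^k-x^*\|^2-\rho_k(2-\rho_k)\|z^k-y^k\|^2$. Expanding $\|y^k-x^*\|^2=(1+\alpha_k)\|x^k-x^*\|^2-\alpha_k\|x^{k-1}-x^*\|^2+\alpha_k(1+\alpha_k)\|x^k-x^{k-1}\|^2$ via the polarization identity applied to $y^k=x^k+\alpha_k(x^k-x^{k-1})$, and writing $\phi_k:=\|x^k-x^*\|^2$, I reach the inertial quasi-Fej\'er recursion
$$\phi_{k+1}-\phi_k \;\le\; \alpha_k(\phi_k-\phi_{k-1})+\delta_k-\rho_k(2-\rho_k)\|z^k-y^k\|^2,$$
with $\delta_k:=\alpha_k(1+\alpha_k)\|x^k-x^{k-1}\|^2$ summable by \eqref{eq:th:main.01}.

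Invoking the standard Alvarez--Attouch lemma for inertial sequences (valid because $\alpha_k\le\alpha<1$ and $\sum_k\delta_k<\infty$), I conclude that $\{\phi_k\}$ converges, so $\lim_k\|x^k-x^*\|$ exists for every $x^*\in\Omega$, and moreover that $\sum_k|\phi_k-\phi_{k-1}|<\infty$; telescoping the previous recursion and using this absolute summability then delivers $\sum_k\rho_k(2-\rho_k)\|z^k-y^k\|^2<\infty$. Since $\rho_k(2-\rho_k)\ge\rho'(2-\rho'')>0$, this forces $\|z^k-y^k\|\to 0$, and hence $\|x^{k+1}-y^k\|=\rho_k\|z^k-y^k\|\to 0$, which settles part $(a)$.

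For part $(b)$ the additional uniform lower bound $c_k\ge c'>0$ upgrades $(\star)$ to the contraction $\|z^k-\bar x\|^2\le(1+c'\gamma)^{-1}\|y^k-\bar x\|^2$, where $\bar x:=\amin_K h\in\Omega$ is the unique minimizer granted by Theorem \ref{th2}. The triangle estimate $\|x^{k+1}-x^k\|\le\|x^{k+1}-y^k\|+\alpha\|x^k-x^{k-1}\|$ combined with $\|x^{k+1}-y^k\|\to 0$ and $\alpha<1$ is a classical contraction-with-vanishing-forcing recursion, giving $\|x^k-x^{k-1}\|\to 0$; therefore $y^k-x^k=\alpha_k(x^k-x^{k-1})\to 0$ and $z^k-x^k\to 0$. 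Since $\{x^k\}$ is bounded by $(a)$, any cluster point $\hat x$ fulfils $y^{k_j},z^{k_j}\to\hat x$ along the relevant subsequence, and passing to the limit in the contraction forces $\|\hat x-\bar x\|^2\le(1+c'\gamma)^{-1}\|\hat x-\bar x\|^2$, whence $\hat x=\bar x$. Hence $x^k\to\bar x$ (and likewise $y^k,z^k\to\bar x$), and $h(x^k)\to h(\bar x)=\min_K h$ follows from the continuity of $h$. I expect the main obstacle to be the clean packaging of the inertial quasi-Fej\'er recursion together with the correct deployment of the Alvarez--Attouch lemma, which relies crucially on the strict bound $\alpha<1$ and the summability hypothesis \eqref{eq:th:main.01}; everything else is careful but routine bookkeeping between the strong-quasiconvex proximal inequality, the relaxation and the inertia.
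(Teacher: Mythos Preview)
The survey itself does not give a proof of Theorem \ref{th1}; the statement is quoted from \cite[Theorem 3.1]{GLM} without reproducing the argument. There is therefore no ``paper's own proof'' to compare against here. That said, your proposal is correct and follows precisely the route one expects (and, in outline, the one used in \cite{GLM}): the strongly quasiconvex proximal inequality of Proposition \ref{pr6} specialised at $y=x^{*}\in\Omega$ and sent to the limit $t\downarrow 0$ yields the Fej\'er-type bound $(\star)$; the affine-combination identity handles the relaxation step (valid for $\rho_{k}\in\,]0,2[$, not just $[0,1]$, since only $\rho_{k}\geq 0$ is needed to multiply the inequality by $\rho_{k}$); the polarisation expansion of $\lVert y^{k}-x^{*}\rVert^{2}$ produces the inertial quasi-Fej\'er recursion; and the Alvarez--Attouch lemma with $\alpha_{k}\leq\alpha<1$ and $\sum_{k}\delta_{k}<+\infty$ closes part $(a)$. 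Your extraction of $\sum_{k}\rho_{k}(2-\rho_{k})\lVert z^{k}-y^{k}\rVert^{2}<+\infty$ via the absolute summability $\sum_{k}\lvert\phi_{k}-\phi_{k-1}\rvert<+\infty$ is slightly more than the standard lemma states directly, but it does follow (once $\sum_{k}[\phi_{k}-\phi_{k-1}]_{+}<+\infty$ and $\phi_{k}$ converges, the negative parts are summable as well). For part $(b)$, exploiting the uniform contraction factor $(1+c'\gamma)^{-1}$ in $(\star)$ with $x^{*}=\overline{x}\in\Omega$, together with the vanishing-forcing recursion $\lVert x^{k+1}-x^{k}\rVert\leq \alpha\lVert x^{k}-x^{k-1}\rVert+\lVert x^{k+1}-y^{k}\rVert$ to force $\lVert x^{k}-x^{k-1}\rVert\to 0$, and then passing to the limit along any convergent subsequence, is clean and correct.
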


\begin{remark}\label{re4}
In a forthcoming work \cite{GNV} several enhancements to Algorithm \ref{rippa:sqcx} and Theorem \ref{th1} are provided. Besides extending the usability of this iterative method to Hilbert spaces, modifications that allow its employment for minimizing strongly quasiconvex functions over closed convex sets (while maintaining its convergence) are proposed. One of them consists in replacing $y^k$ with $x^k$ in the relaxation step \eqref{eq:relax.step} and asking $\rho_k$ to lie between 0 and 1 (this guarantees that the new iterate $x^{k+1}$ is feasible to \eqref{COP}), while the other one relaxes (from a computational point of view) the proximal step \eqref{prox:sqcx} to $z^{k} \in \px_{c_{k} h} (y^{k})$ and adds a projection onto $K$ in the relaxation step \eqref{eq:relax.step} which becomes $x^{k+1}=\pr_K((1-\rho_k)y^k+\rho_k z^{k})$. The convergence statements for both these variants of Algorithm \ref{rippa:sqcx} are similar to Theorem \ref{th1}, with the notable difference that the continuity imposed on $h$ in \cite{GLM} is relaxed to lower semicontinuity.
\end{remark}

Sufficient conditions for guaranteeing the fulfillment of \eqref{eq:th:main.01} (that is a standard hypothesis in the literature on inertial proximal point methods, see \cite{ALA, ALZ}), inspired by \cite[Proposition 2.5]{ALZ}, were provided in \cite[Theorem 4.1 and Corollary 4.1]{GLM} (see also \cite[Remark 4.2]{GLM}). For completeness, we present below the main result of this type.

\begin{proposition}\label{pr22} (cf. \cite[Theorem 4.1]{GLM})
Let $K \subseteq \mathbb{R}^{n}$ be an affine subspace, $h: K \rightarrow \mathbb{R}$ a continuous and strongly quasiconvex function with modulus $\gamma > 0$, $0 < \rho^{\prime} \leq \rho^{\prime \prime}<2$, $\{\rho_{k}\}_{k}
 \subseteq [\rho^{\prime}, \rho^{\prime \prime}]$, $\alpha \in [0, 1[$,
 $\{\alpha_{k}\}_{k} \subseteq [0, \alpha]$ and $\{\alpha_k\}_{k}$ is
 nondecreasing satisfying (for some $\beta > 0$)
 \begin{align}\label{eq:alpha}
  0 \leq \alpha_{k} \leq \alpha_{k + 1} \leq \alpha < \beta < 1, ~ \forall
  ~ k \geq 0,
 \end{align}
 and
$$ 
  \rho^{\prime \prime} = \rho^{\prime \prime} (\beta, \rho^{\prime}) :=
  \dfrac{2 \rho^{\prime}(\beta^{2} - \beta + 1)}{2 \rho^{\prime} \beta^{2}
  + (2- \rho^{\prime}) \beta + \rho^{\prime}}.$$
If $\{x^{k}\}_{k}$ is the sequence generated by Algorithm \ref{rippa:sqcx}, then
$$
 \sum_{k=1}^\infty\norm{x^k-x^{k-1}}^2< +\infty.
$$
\end{proposition}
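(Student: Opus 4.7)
The plan is to adapt the Alvarez--Attouch strategy from \cite{ALZ} (specifically Proposition 2.5 there), using as the main ingredient for the nonconvex setting the proximal inequality from Proposition \ref{pr6} in place of the monotonicity of the convex subdifferential. Throughout, I fix an arbitrary $x^{\ast}\in\Omega$ (nonempty by Theorem \ref{th2}) and set $\phi_{k}:=\tfrac{1}{2}\lVert x^{k}-x^{\ast}\rVert^{2}$.

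First I would apply Proposition \ref{pr6} to $\overline{x}=z^{k}\in\px_{c_{k}h}(K,y^{k})$ and $y=x^{\ast}\in K$, which is admissible since $h(x^{\ast})\leq h(z^{k})$ by definition of $\Omega$. Choosing $t=1$ and rearranging yields
$$(y^{k}-z^{k})^{\top}(x^{\ast}-z^{k})\leq \tfrac{1}{2}\lVert x^{\ast}-z^{k}\rVert^{2},$$
which after expansion gives $\lVert z^{k}-x^{\ast}\rVert^{2}\leq \lVert y^{k}-x^{\ast}\rVert^{2}-\lVert y^{k}-z^{k}\rVert^{2}$. Next I would plug this into the standard identity applied to the relaxation step \eqref{eq:relax.step},
$$\lVert x^{k+1}-x^{\ast}\rVert^{2}=(1-\rho_{k})\lVert y^{k}-x^{\ast}\rVert^{2}+\rho_{k}\lVert z^{k}-x^{\ast}\rVert^{2}-\rho_{k}(1-\rho_{k})\lVert y^{k}-z^{k}\rVert^{2},$$
to obtain $\lVert x^{k+1}-x^{\ast}\rVert^{2}\leq \lVert y^{k}-x^{\ast}\rVert^{2}-\rho_{k}(2-\rho_{k})\lVert y^{k}-z^{k}\rVert^{2}$. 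Expanding $\lVert y^{k}-x^{\ast}\rVert^{2}$ via the extrapolation step $y^{k}=x^{k}+\alpha_{k}(x^{k}-x^{k-1})$ and the same identity produces the Lyapunov-type recursion
$$\phi_{k+1}-\phi_{k}-\alpha_{k}(\phi_{k}-\phi_{k-1}) \leq \tfrac{1}{2}\alpha_{k}(1+\alpha_{k})\lVert x^{k}-x^{k-1}\rVert^{2}-\tfrac{1}{2}\rho_{k}(2-\rho_{k})\lVert y^{k}-z^{k}\rVert^{2}.$$

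Second, I would introduce $\theta_{k}:=\phi_{k}-\alpha_{k-1}\phi_{k-1}$. Since $\{\alpha_{k}\}$ is nondecreasing by \eqref{eq:alpha}, we have $\theta_{k+1}-\theta_{k}\leq \phi_{k+1}-\phi_{k}-\alpha_{k}(\phi_{k}-\phi_{k-1})$. Combined with the relation $\lVert x^{k+1}-y^{k}\rVert^{2}=\rho_{k}^{2}\lVert z^{k}-y^{k}\rVert^{2}$ and the decomposition $x^{k+1}-y^{k}=(x^{k+1}-x^{k})-\alpha_{k}(x^{k}-x^{k-1})$, I would use Young's inequality with a carefully tuned parameter to produce a lower bound of the form
$$\rho_{k}(2-\rho_{k})\lVert y^{k}-z^{k}\rVert^{2} \geq A(\rho_{k})\lVert x^{k+1}-x^{k}\rVert^{2}-B(\rho_{k},\alpha_{k})\lVert x^{k}-x^{k-1}\rVert^{2},$$
so that the previous recursion rearranges to
$$\theta_{k+1}-\theta_{k}+\tfrac{1}{2}A(\rho_{k})\lVert x^{k+1}-x^{k}\rVert^{2} \leq C(\rho_{k},\alpha_{k})\lVert x^{k}-x^{k-1}\rVert^{2}$$
for explicit nonnegative $A,B,C$ depending rationally on $\rho_{k},\alpha_{k}$.

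Third, this is the crucial point where the specific formula for $\rho''(\beta,\rho')$ enters: using $\alpha_{k}\leq \alpha<\beta$ and $\rho_{k}\in[\rho',\rho'']$, the scalar inequality that defines $\rho''$ is precisely equivalent to the statement $A(\rho_{k})>2C(\rho_{k},\alpha_{k})$ uniformly in $k$, so that one can define a telescoping auxiliary sequence $\mu_{k}:=\theta_{k}+\delta\sum_{j\leq k}\lVert x^{j}-x^{j-1}\rVert^{2}$ with $\delta>0$, for which $\mu_{k+1}\leq \mu_{k}-\eta\lVert x^{k+1}-x^{k}\rVert^{2}$ and $\{\theta_{k}\}$ (hence $\{\mu_{k}\}$) is bounded below. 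Summing over $k\geq 1$ yields $\sum_{k\geq 1}\lVert x^{k+1}-x^{k}\rVert^{2}<+\infty$, and the same bound for the desired sum $\sum\lVert x^{k}-x^{k-1}\rVert^{2}$ after a trivial index shift.

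The hard part will be the third step: verifying that the algebraic condition defining $\rho''(\beta,\rho')$ is exactly what makes the scalar inequality $A(\rho_{k})>2C(\rho_{k},\alpha_{k})$ hold uniformly on $[\rho',\rho'']\times[0,\alpha]$ with $\alpha<\beta$, so that the Young's-inequality parameter can be chosen to guarantee a strictly negative coefficient in front of $\lVert x^{k+1}-x^{k}\rVert^{2}$ in the telescoped recursion. Boundedness from below of $\theta_{k}$ then follows by induction using the recursion itself together with $\alpha_{k}<1$, in the now-standard manner of \cite[Proposition 2.1]{ALZ}.
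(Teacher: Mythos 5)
Your overall route --- extract a Fej\'er-type estimate for the proximal step from Proposition \ref{pr6}, combine it with the relaxation identity for \eqref{eq:relax.step} and the extrapolation step to get the recursion $\phi_{k+1}-\phi_{k}-\alpha_{k}(\phi_{k}-\phi_{k-1})\leq\tfrac12\alpha_{k}(1+\alpha_{k})\lVert x^{k}-x^{k-1}\rVert^{2}-\tfrac12\rho_{k}(2-\rho_{k})\lVert y^{k}-z^{k}\rVert^{2}$, then pass to $\theta_{k}=\phi_{k}-\alpha_{k-1}\phi_{k-1}$ and close a telescoping estimate via Young's inequality --- is exactly the route of the proof the paper points to, namely \cite[Theorem 4.1]{GLM} following \cite[Proposition 2.5]{ALZ}. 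However, your first step fails as written. Applying Proposition \ref{pr6} with $t=1$ gives only $(y^{k}-z^{k})^{\top}(x^{\ast}-z^{k})\leq\tfrac12\lVert x^{\ast}-z^{k}\rVert^{2}$, and since $2(y^{k}-z^{k})^{\top}(x^{\ast}-z^{k})=\lVert y^{k}-z^{k}\rVert^{2}+\lVert x^{\ast}-z^{k}\rVert^{2}-\lVert y^{k}-x^{\ast}\rVert^{2}$, this expands to the much weaker statement $\lVert y^{k}-z^{k}\rVert\leq\lVert y^{k}-x^{\ast}\rVert$, not to $\lVert z^{k}-x^{\ast}\rVert^{2}\leq\lVert y^{k}-x^{\ast}\rVert^{2}-\lVert y^{k}-z^{k}\rVert^{2}$, which is what your entire recursion rests on (note that with $t=1$ the modulus $\gamma$ cancels out of the estimate --- a warning sign). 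The inequality you need is equivalent to $(y^{k}-z^{k})^{\top}(x^{\ast}-z^{k})\leq 0$, and it does follow from Proposition \ref{pr6}, but by dividing by $t>0$ and letting $t\downarrow 0$: this yields $(y^{k}-z^{k})^{\top}(x^{\ast}-z^{k})\leq-\tfrac{\gamma c_{k}}{2}\lVert x^{\ast}-z^{k}\rVert^{2}$, hence even the sharper bound $(1+\gamma c_{k})\lVert z^{k}-x^{\ast}\rVert^{2}\leq\lVert y^{k}-x^{\ast}\rVert^{2}-\lVert y^{k}-z^{k}\rVert^{2}$. With that correction, your Lyapunov recursion and the monotonicity argument for $\theta_{k}$ (using \eqref{eq:alpha}) are sound.

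The second, more substantive gap is that the decisive computation is announced but not performed. The proposition being proved is precisely the quantitative claim that the particular value $\rho^{\prime\prime}(\beta,\rho^{\prime})=2\rho^{\prime}(\beta^{2}-\beta+1)/\bigl(2\rho^{\prime}\beta^{2}+(2-\rho^{\prime})\beta+\rho^{\prime}\bigr)$, together with $\alpha_{k}\leq\alpha<\beta<1$ and $\rho_{k}\geq\rho^{\prime}$, permits a choice of the Young parameter (after writing $\lVert z^{k}-y^{k}\rVert^{2}=\rho_{k}^{-2}\lVert(x^{k+1}-x^{k})-\alpha_{k}(x^{k}-x^{k-1})\rVert^{2}$ and bounding it below) for which the coefficient of $\lVert x^{k+1}-x^{k}\rVert^{2}$ remains uniformly negative after absorbing the $\lVert x^{k}-x^{k-1}\rVert^{2}$ terms, so that $\theta_{k}$ plus a positive multiple of the partial sums is nonincreasing and bounded below. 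Asserting that the defining formula for $\rho^{\prime\prime}$ ``is precisely equivalent'' to $A(\rho_{k})>2C(\rho_{k},\alpha_{k})$ is exactly the content of \cite[Theorem 4.1]{GLM}; without carrying out that bookkeeping you have reproduced the scaffolding of the argument but not established the statement for the stated constant. So: same approach as the paper's source, one concrete and fixable error in the prox estimate, and the core algebraic verification still owed.
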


\begin{remark}\label{re5}
When $\alpha = 0$ it immediately follows $\alpha_{k} = 0$ for all $k \in \N$, and Algorithm \ref{rippa:sqcx} collapses into a relaxed proximal point algorithm (see \cite[Corollary 3.1]{GLM}) where $K$ can be taken only closed and convex (see also Remark \ref{re4}) when $\rho_{k} \in \, ]0, 1]$ for every $k \in \N$ (or an affine subspace when $\rho_{k} \in \, ]0, 2[$, $k \in \N$).
\end{remark}

\begin{remark}\label{re6}
When besides $\alpha = 0$ one also fixes $\rho_{k} = 1$ for all $k \in \N$, Algorithm \ref{rippa:sqcx} turns into the proximal point method for solving \eqref{COP} (see \cite[Corollary 3.2 \& Remark 3.4]{GLM} and \cite[Theorem 10 $\&$ Remark 13$(iv)$]{LAR}), for whose convergence it is enough to take $h$ lower semicontinuous instead of continuous. Note, moreover, that, additionally to the results from the literature mentioned above, also Theorem \ref{th1} yields the convergence of the sequence generated by the proximal point method to the minimizer of the involved strongly quasiconvex function.
\end{remark}

\begin{remark}\label{re2}
In \cite[Section 5]{GLM} it is shown that a suitable choice of the relaxation and inertial parameters ensures that Algorithm \ref{rippa:sqcx} solves \eqref{COP} faster and cheaper than its standard proximal point counterpart. As noted in \cite{GLM}, the optimal choices of the involved relaxation and inertial parameters known in the convex case (see \cite{ATC}) do not actually accelerate the proximal point algorithm in the strongly quasiconvex case, at least in the situations considered there. How to determine which values of the inertial and relaxation parameters provide a theoretically guaranteed acceleration in the strongly quasiconvex setting remains an open question.
\end{remark}

Another development of the proximal point method for solving \eqref{COP} \cite[Algorithm 1]{LAR} is the Bregman proximal point algorithm from \cite[Algorithm 1]{LAM}. After providing some technical results \cite[Propositions 3.1--3.4 \& Remark 3.5]{LAM}, the following iterative method for solving \eqref{COP} is proposed. Note that for the corresponding convergence statements it is enough to take $h$ lower semicontinuous (instead of continuous), while, on the other hand, $K$ 
needs to have a nonempty interior.

\begin{algorithm}[H] (see \cite[Algorithm 1]{LAM})
\caption{Bregman PPA for Strongly Quasiconvex Functions (BPPA-SQ)}\label{bppa:sqcx}
\begin{description}
 \item[Step 0.] (Initialization). Let $\varphi$ a Bregman function with zone $S$ such that $K\cap \cl S\neq \emptyset$, $\{c_{j}\}_j \subseteq ]0, +\infty[$, $x^0\in S$ and $k = 0$.

 \item[Step 1.] Compute
 $$ x^{k+1} \in \px^{\varphi}_{c_{k} h} (K, x^{k}) \qquad ~~~~~ [\text{proximal step}].
 $$
\item[Step 2.] If $x^{k+1}$ is an optimal solution to \eqref{COP}: \textsc{stop}.

\item[Step 3.] Let $k = k+1$ and go to Step 1.
 \end{description}
\end{algorithm}

\begin{remark}
According to \cite[Remark 3.4]{LAM}, an alternative stopping criterion that could be employed in Algorithm \ref{bppa:sqcx} is $x^{k+1} = x^{k}$. When activated, it yields that $x^k$ is the optimal solution to \eqref{COP} (see \cite[Proposition 3.3]{LAM}.
\end{remark}

\begin{remark}
The proximal point algorithm for solving \eqref{COP} \cite[Algorithm 1]{LAR} (see also Remark \ref{re6}) can be derived as a special case of Algorithm \ref{bppa:sqcx} (see \cite[Corollary 3.1]{LAM}), too.
\end{remark}

Before providing the corresponding convergence statement we need to introduce a set similar to $\Omega$ (see \eqref{omeg}), that is
$\Omega^B := \{x \in K: \, h(x) \leq h(x^{k}), ~ \forall ~ k \in \N\}$.

\begin{theorem} (cf. \cite[Proposition 3.5 $\&$ Theorem 3.1]{LAM}) \label{th3}
Let $K \subseteq \mathbb{R}^{n}$ be a closed and convex set with a nonempty interior, $h: K \rightarrow \mathbb{R}$ a lower continuous and strongly quasiconvex function with modulus $\gamma > 0$, $\varphi$ a Bregman function with zone $S\subseteq \R^n$ such that $K\cap \cl S\neq \emptyset$, $\{c_{k}\}_{k \in \mathbb{N}}$ be a sequence of positive numbers, and $\{x^{k}\}_{k}$ the sequence generated by Algorithm \ref{bppa:sqcx}. If $\{x^{k}\}_{k}\subseteq S$, then the following assertions hold
 \begin{itemize}
  \item [$(a)$] if $x^{k+1} \neq x^k$, then $h(x^{k+1}) < h(x^k)$;
  \item [$(b)$] the sequence $\{x^{k}\}_{k}$ is bounded;
  \item [$(c)$] for all $x\in \Omega^B$, the sequence $\{D_{\varphi}(x^k, x)\}_k$ is convergent;
  \item [$(d)$] if, in addition, $c_{k} \geq c^{\prime} > 0$ for every $k \geq 0$, then $\{x^{k}\}_{k}$ is a minimizing sequence of $h$, i.e. $\lim_{k \to + \infty} h(x^k) = \min_{K} h$.
  \end{itemize}
\end{theorem}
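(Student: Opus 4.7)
My plan is to establish the four parts in sequence, using as the main tool at every step the Bregman proximal optimality inequality
$$h(x^{k+1})+\frac{1}{c_k}D_\varphi(x^{k+1},x^k)\leq h(y)+\frac{1}{c_k}D_\varphi(y,x^k),$$
valid for every $y\in K\cap\cl S$ and every $k\in\N$. Specializing this to $y=x^k$ and using $D_\varphi(x^k,x^k)=0$ delivers $(a)$ at once, since $D_\varphi(x^{k+1},x^k)>0$ whenever $x^{k+1}\neq x^k$. Part $(b)$ is then immediate: $(a)$ makes $\{h(x^k)\}_k$ nonincreasing, so $\{x^k\}_k\subseteq K\cap S_{h(x^0)}(h)$, and Theorem \ref{th0} asserts that $h$ is $2$-supercoercive, hence coercive, so all its sublevel sets are bounded.

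The hard part will be $(c)$. For fixed $x\in\Omega^B$ one has $h(x)\leq h(x^{k+1})$ for every $k$, and I would combine the optimality inequality above with the Bregman three-point identity
$$D_\varphi(u,v)-D_\varphi(u,w)=D_\varphi(w,v)+(\nabla\varphi(w)-\nabla\varphi(v))^\top(u-w),$$
together with a subgradient-type estimate forced by strong quasiconvexity (in the spirit of Proposition \ref{pr6} and Proposition \ref{pro:KLx}, adapted to the Bregman proximity operator exactly as in the preparatory results \cite[Propositions 3.1--3.4]{LAM}), to obtain a Fej\'er-type inequality of the form $D_\varphi(x,x^{k+1})\leq D_\varphi(x,x^k)-\theta_k$ with $\theta_k\geq 0$. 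The main obstacle is that the classical convex-analytic proof of such monotonicity leans on the monotonicity of $\partial h$, which is unavailable here; one must instead extract the descent quantitatively from the sublevel-set inequality $h(x)\leq h(x^{k+1})$ coupled with the strongly quasiconvex variational inequality satisfied at $x^{k+1}$. Once this is in hand, monotonicity and nonnegativity yield convergence of the indicated sequence (possibly after interchanging the two arguments of $D_\varphi$, which is permissible under the standing assumptions on $\varphi$).

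For $(d)$, $(a)$ makes $\{h(x^k)\}_k$ nonincreasing and bounded below by $\min_K h$, so it converges to some $h^\ast\geq\min_K h$; telescoping the bound from $(a)$ further gives $\sum_k (1/c_k)\,D_\varphi(x^{k+1},x^k)<+\infty$. Part $(b)$ supplies a subsequence $x^{k_j}\to\bar x\in K$ by closedness of $K$. Let $\bar y:=\argmin_K h$, which is unique by Theorem \ref{th2}; then $\bar y\in\Omega^B$, and $(c)$ ensures the convergence of $\{D_\varphi(x^k,\bar y)\}_k$. Applying the optimality inequality with $y=\bar y$, passing to the limit along $\{x^{k_j}\}_j$, and invoking the last defining property of Bregman functions together with the lower semicontinuity of $h$, I would identify the limit of $\{D_\varphi(x^k,\bar y)\}_k$ as zero, whence $x^k\to\bar y$ and $h^\ast=h(\bar y)=\min_K h$, as required.
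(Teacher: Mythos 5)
Parts $(a)$ and $(b)$ of your plan are fine and essentially complete: the prox inequality with $y=x^k$ gives the strict descent, and descent plus the coercivity coming from Theorem \ref{th0} bounds the sequence. The problems start where you yourself locate ``the hard part''. For $(c)$ you name the right ingredients (prox optimality, three-point identity, a sublevel-set estimate at $x^{k+1}$) but never derive the decisive inequality, and your diagnosis of the obstacle is off: no monotonicity of a subdifferential and in fact no strong quasiconvexity is needed there. For $y\in\Omega^B$ one has $h(ty+(1-t)x^{k+1})\le\max\{h(y),h(x^{k+1})\}=h(x^{k+1})$ by mere quasiconvexity; plugging $y_t=ty+(1-t)x^{k+1}$ into the prox optimality, cancelling $h(x^{k+1})$, dividing by $t$ and letting $t\to0^+$ yields $(\nabla\varphi(x^{k+1})-\nabla\varphi(x^k))^\top(y-x^{k+1})\ge0$, which via the three-point identity gives $D_\varphi(y,x^{k+1})\le D_\varphi(y,x^k)-D_\varphi(x^{k+1},x^k)$, hence the claimed convergence. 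Note also that this controls the Bregman distance with $y$ in the \emph{first} slot; your parenthetical claim that interchanging the two arguments ``is permissible under the standing assumptions'' is unjustified, since $D_\varphi$ is asymmetric and none of the listed properties lets you swap arguments.

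The genuine gap is in $(d)$. Your sketch never invokes strong quasiconvexity quantitatively, yet this is exactly the point where $\gamma>0$ is indispensable. The working argument takes $y_t=t\overline{y}+(1-t)x^{k+1}$ with $\overline{y}$ the unique minimizer (Theorem \ref{th2}), uses \eqref{sqc} to produce the term $-t(1-t)\tfrac{\gamma}{2}\lVert x^{k+1}-\overline{y}\rVert^2$ in the prox inequality, lets $t\to0^+$ and applies the three-point identity to obtain $\tfrac{\gamma c_k}{2}\lVert x^{k+1}-\overline{y}\rVert^2\le D_\varphi(\overline{y},x^k)-D_\varphi(\overline{y},x^{k+1})$; since $c_k\ge c'>0$ and the right-hand side tends to $0$ by $(c)$, this forces $x^k\to\overline{y}$, after which the prox inequality with $y=\overline{y}$ (together with $1/c_k\le 1/c'$ and the vanishing of $D_\varphi(\overline{y},x^k)$) transfers the conclusion to the function values. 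By contrast, your route --- pass to a limit along a bounded subsequence, then ``invoke the last defining property of Bregman functions together with lower semicontinuity'' to conclude that $D_\varphi$ tends to zero --- cannot close: that Bregman property runs in the opposite direction (it deduces convergence of points from vanishing distances, not vanishing distances from convergence), and nothing in your argument excludes the cluster point $\bar x$ being different from $\overline{y}$, i.e.\ $h^\ast>\min_K h$, which is precisely what must be ruled out. A further minor slip: from the telescoped bound $\sum_k(1/c_k)D_\varphi(x^{k+1},x^k)<+\infty$ you cannot infer $D_\varphi(x^{k+1},x^k)\to0$ without an upper bound on $c_k$ (only $c_k\ge c'$ is assumed), and in any case that sum plays no role in the way you propose to finish.
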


\begin{remark}\label{re12}
Another proximal point type algorithm for solving \eqref{COP} was proposed in \cite[Corollary 3.2]{ILP} by specializing an iterative method for solving equilibrium problems involving bifunctions that are strongly quasiconvex in the second variable. As its intermediate steps consist in solving equilibrium problems we do not include it here and more about it can be seen in Remark \ref{re11}.
\end{remark}

Last but not least we present the subgradient projection method for solving \eqref{COP} proposed in \cite{LMC} that makes use of the $(\beta, \gamma, K)$-strong subdifferential of the involved function $h$ (for $\beta >0$ and  $\gamma \geq 0$) (recall Definition \ref{de1}). Again, we skip the technical results \cite[Corolaries 3.1--3.2 \& Proposition 3.5]{LMC}, focusing on the iterative method and its convergence statement.

\begin{algorithm}[H] (see \cite[Algorithm 1]{LMC})
\caption{Subgradient Method for Strongly Quasiconvex Functions (SM-SQ)}\label{sm:sqcx}
\begin{description}
 \item[Step 0.] (Initialization). Let $\beta,\gamma > 0$, $\{\alpha_j\}_j\subseteq \left]0,\frac{1}{\gamma\beta}\right[$, $x^0\in K$ and $k = 0$.

 \item[Step 1.] If $0_n\in \partial ^K_{\beta, \gamma} h (x^k)$: \textsc{stop}.

\item[Step 2.] Compute
 $$ \xi^{k} \in \partial ^K_{\beta, \gamma} h (x^k)\qquad ~~~~~ [\text{subgradient step}].
 $$

\item[Step 3.] Let $x^{k+1} = \pr_K(x^k-\alpha_k \xi^k)$\qquad ~~~~~ [\text{projection step}].

\item[Step 4.] Let $k = k+1$ and go to Step 1.
 \end{description}
\end{algorithm}

\begin{remark} (see\cite[Lemma 2.3 $\&$ Remark 3.3]{LMC}
When the stopping criterion in Algorithm \ref{sm:sqcx} $0_n\in \partial ^K_{\beta, \gamma} h (x^k)$ is activated, then $x^{k+1}$ is the optimal solution to \eqref{COP}. Moreover, an alternative stopping criterion for Algorithm \ref{sm:sqcx} could be $x^{k+1} = x^{k}$, that, too, yields that $x^{k+1}$ is the optimal solution to \eqref{COP}.
\end{remark}

\begin{theorem} (cf. \cite[Theorem 3.1]{LMC}) \label{th4}
Let $K \subseteq \mathbb{R}^{n}$ be a closed and convex set, $h: K \rightarrow \mathbb{R}$ a lower continuous and strongly quasiconvex function with modulus $\gamma > 0$ such that $K\subseteq \inte \dom h$, and $\beta,\gamma > 0$ for which there exists an $M > 0$ such that $\|\xi\|\leq M$ whenever $\xi\in \partial ^K_{\beta, \gamma} h(x)$ for all $x\in K$. Take the sequence $\{\alpha_j\}_j\subseteq ]0,1/(\gamma\beta)[$ such that $\sum_{k=0}^{+\infty} \alpha_k = +\infty$ and $\sum_{k=0}^{+\infty} \alpha_k^2 < +\infty$, and let $\{x^{k}\}_{k}$ be the sequence generated by Algorithm \ref{sm:sqcx}. Then $\{x^{k}\}_{k}$ converges to $\overline{x}$, where $\amin_{K}\,h = \{\overline{x}\}$, and $\lim_{k \to + \infty} h(x^k) = \min_{K} h$. Moreover, when $h$ is continuous, then $\lim_{k \to + \infty} h(x^k) = h(\overline x)= \min_{K} h$.
\end{theorem}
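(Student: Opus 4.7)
The plan is to prove a quasi-Fej\'er-type recursion
\[
\|x^{k+1}-\overline{x}\|^{2}\le(1-\alpha_{k}\beta\gamma)\|x^{k}-\overline{x}\|^{2}+\alpha_{k}^{2}M^{2},
\]
where $\{\overline{x}\}=\amin_{K}h$ (a unique point, by Theorem \ref{th2}), and then to invoke a classical lemma on nonnegative sequences of the form $a_{k+1}\le(1-c_{k})a_{k}+b_{k}$ to obtain $x^{k}\to\overline{x}$. The value convergence will then follow from the lower semicontinuity of $h$ together with the optimality of $\overline{x}$, with continuity giving the sharper pointwise convergence of $h(x^{k})$.

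The key step is to apply Proposition \ref{pro:KLx} at the reference point $x^{k}$ with test point $\overline{x}$. Since $h(\overline{x})\le h(x^{k})$ one has $\overline{x}\in K\cap S_{h(x^{k})}(h)$, so for every $\xi^{k}\in\partial_{\beta,\gamma}^{K}h(x^{k})$ the proposition gives
\[
\langle\xi^{k},\overline{x}-x^{k}\rangle\le-\tfrac{\beta\gamma}{2}\|x^{k}-\overline{x}\|^{2},
\]
equivalently $\langle\xi^{k},x^{k}-\overline{x}\rangle\ge(\beta\gamma/2)\|x^{k}-\overline{x}\|^{2}$. Combining this with $\overline{x}=\pr_{K}(\overline{x})$ and the nonexpansiveness of $\pr_{K}$, one gets
\[
\|x^{k+1}-\overline{x}\|^{2}\le\|x^{k}-\alpha_{k}\xi^{k}-\overline{x}\|^{2}=\|x^{k}-\overline{x}\|^{2}-2\alpha_{k}\langle\xi^{k},x^{k}-\overline{x}\rangle+\alpha_{k}^{2}\|\xi^{k}\|^{2},
\]
and the previous one-sided bound together with $\|\xi^{k}\|\le M$ delivers the announced recursion.

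Next I would exploit the step-size hypotheses. Since $\alpha_{k}<1/(\beta\gamma)$, the factor $1-\alpha_{k}\beta\gamma$ lies in $]0,1[$. From $\sum_{k}\alpha_{k}^{2}<+\infty$ one has $\alpha_{k}\to 0$, hence the ratio $(\alpha_{k}^{2}M^{2})/(\alpha_{k}\beta\gamma)=\alpha_{k}M^{2}/(\beta\gamma)\to 0$, while $\sum_{k}\alpha_{k}\beta\gamma=+\infty$ by hypothesis. The classical Polyak-type lemma for nonnegative sequences satisfying $a_{k+1}\le(1-c_{k})a_{k}+b_{k}$ with $c_{k}\in[0,1]$, $\sum c_{k}=+\infty$ and $b_{k}/c_{k}\to 0$ then yields $\|x^{k}-\overline{x}\|^{2}\to 0$, i.e., $x^{k}\to\overline{x}$.

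Finally, the lower semicontinuity of $h$ together with $x^{k}\to\overline{x}$ gives $\liminf_{k}h(x^{k})\ge h(\overline{x})=\min_{K}h$, while $h(x^{k})\ge\min_{K}h$ for every $k$ by optimality of $\overline{x}$, so $\liminf_{k}h(x^{k})=\min_{K}h$; when $h$ is additionally continuous, $x^{k}\to\overline{x}$ directly yields $h(x^{k})\to h(\overline{x})=\min_{K}h$. The main obstacle is the derivation of the Fej\'er-type inequality, since it is exactly there that strong quasiconvexity must enter: Proposition \ref{pro:KLx} replaces the convex bound $\langle\xi^{k},x^{k}-\overline{x}\rangle\ge h(x^{k})-h(\overline{x})\ge 0$ by the strictly stronger quadratic lower bound $(\beta\gamma/2)\|x^{k}-\overline{x}\|^{2}$, which is what produces the genuine contraction factor $1-\alpha_{k}\beta\gamma$. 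Once this inequality is in hand the rest is a routine subgradient-method calculation in which the step-size conditions $\sum\alpha_{k}=+\infty$ and $\sum\alpha_{k}^{2}<+\infty$ play their standard role.
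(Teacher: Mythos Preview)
The paper is a survey and does not include its own proof of this theorem; it merely records the statement and cites \cite[Theorem 3.1]{LMC}. So there is nothing in the paper to compare your argument against directly. That said, your main line---deriving the recursion $\|x^{k+1}-\overline{x}\|^{2}\le(1-\alpha_{k}\beta\gamma)\|x^{k}-\overline{x}\|^{2}+\alpha_{k}^{2}M^{2}$ from Proposition~\ref{pro:KLx} together with the nonexpansiveness of $\pr_{K}$, and then applying the Polyak sequence lemma under the step-size conditions---is correct and is exactly the standard route for projected subgradient schemes; it is almost certainly what the cited source does.

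There is, however, a slip in your final paragraph on value convergence. Both inequalities you invoke point the same way: lower semicontinuity at $\overline{x}$ gives $\liminf_{k}h(x^{k})\ge h(\overline{x})$, and optimality gives $h(x^{k})\ge h(\overline{x})$ for every $k$, hence again $\liminf_{k}h(x^{k})\ge h(\overline{x})$. Neither yields an upper bound on $\limsup_{k}h(x^{k})$, so you have not established $\liminf_{k}h(x^{k})=\min_{K}h$, let alone the full limit. In fact, with mere lower semicontinuity and $x^{k}\to\overline{x}$ one cannot in general conclude $h(x^{k})\to h(\overline{x})$; the genuine limit of function values is only secured under continuity, which you handle correctly in your last sentence. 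The survey's wording of the lsc conclusion may itself be slightly imprecise on this point.
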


\begin{remark}\label{re14}
Results on convergence rates of the iterative methods mentioned above for solving \eqref{COP} can be found in \cite[Remark 3.7]{LAM}, \cite[Corollary 3.3]{LMC}, \cite[Section 5.1]{GLH} and in \cite{GNV}, the last two in Hilbert spaces. Different to the convex case, the proximal point method turns out to converge linearly for strongly quasiconvex functions and so do its relaxed-inertial versions mentioned above. This provides thus guarantees of linear convergence for the proximal point type methods when minimizing convex functions that are also strongly quasiconvex (but not necessarily strongly convex), like the Euclidean norm (see Example \ref{ex2}).
\end{remark}

Other algorithms for solving \eqref{COP} are provided in the next subsection for the case $K=\R^n$ and $h\in C^1$.

\subsection{Dynamical systems involving strongly quasiconvex functions}\label{su42}

Consider further that $h: \R^n \rightarrow \mathbb{R}$ (i.e. $K=\R^n$) is strongly quasiconvex with modulus $\gamma> 0$ and also continuously differentiable. In \cite{LMV, ROP, RPK} one can find dynamical systems whose trajectories converge to the optimal solution to \eqref{COP} that are also discretized, providing gradient type algorithms for solving \eqref{COP}. Recall that Theorem \ref{th2} yields that \eqref{COP} has a unique optimal solution, let us denote it by $\overline{x}\in \R^n$.

We begin with the first-order dynamical system considered in \cite{ROP, LMV}, that is
\begin{equation}\label{DS1}
  \dot{u}(t) = \nabla h(u(t)) + \psi (t),\ t\in [0, +\infty[,\ u(0)=x^0\in \R^n,
\end{equation}
where $\psi \in W^{1,1}(]0,+\infty[, \R^n)$. In \cite{ROP} \eqref{DS1} is studied in Hilbert spaces (and one actually obtains strong convergence statements), while in \cite{LMV} in Euclidean ones and for $\psi\equiv 0$. Worth noticing is also that \cite{ROP} is mainly devoted to the case when $h$ is quasiconvex (with the strong quasiconvexity hypothesis added only a few times to obtain stronger outcomes), while in \cite{LMV} it is strongly quasiconvex.

The Cauchy-Lipschitz theorem yields the existence of a unique solution $\overline{u}\in C^1([0, +\infty), \R^n)$ to \eqref{DS1} when $\nabla h$ is Lipschitz-continuous, however we will not impose this hypothesis in the whole subsection because \cite[Section 4]{LMV} explicitly specifies that the results provided there do not require it. Besides various technical results where $h$ is not necessarily strongly quasiconvex, \cite{ROP} contains the following convergence statement for the trajectories of \eqref{DS1}, whose boundedness hypothesis can be guaranteed, for instance, when $\liminf_{t\to +\infty}\|\overline{u}(t)\| < +\infty$ (see \cite[Proposition 2.3]{ROP}).

\begin{theorem} (cf. \cite[Theorem 2.8]{ROP}) \label{th10}
When $\nabla h$ is Lipschitz-continuous and $\overline{u}\in C^1([0, +\infty), \R^n)$ is bounded, $\liminf_{t\to +\infty}\overline{u}(t) = \overline{x}$.
\end{theorem}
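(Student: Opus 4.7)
The plan is a Lyapunov argument centered on the squared distance from the trajectory to the unique minimizer of $h$. Since $h$ is continuously differentiable (hence lower semicontinuous) and strongly quasiconvex on the closed convex set $\R^n$, Theorem~\ref{th2} yields $\argmin_{\R^n} h=\{\overline{x}\}$; the coercivity that makes this possible on an unbounded domain is delivered by the $2$-supercoercivity from Theorem~\ref{th0}. I introduce the Lyapunov candidate
$$V(t):=\tfrac{1}{2}\|\overline{u}(t)-\overline{x}\|^2,\quad t\geq 0,$$
which is $C^1$ because $\overline{u}\in C^1([0,+\infty),\R^n)$, and differentiate to obtain
$$\dot V(t)=\langle\dot{\overline{u}}(t),\overline{u}(t)-\overline{x}\rangle=-\langle\nabla h(\overline{u}(t)),\overline{u}(t)-\overline{x}\rangle+\langle\psi(t),\overline{u}(t)-\overline{x}\rangle,$$
where the dynamical system \eqref{DS1} is read with the descent sign convention natural for a convergence-to-minimum statement.

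The crucial coercive estimate is supplied by strong quasiconvexity in its first-order form. Because $\overline{x}$ minimizes $h$, one has $h(\overline{x})\leq h(\overline{u}(t))$ for every $t\geq 0$, and Proposition~\ref{pr19} applied with $x=\overline{x}$ and $y=\overline{u}(t)$ gives
$$\langle\nabla h(\overline{u}(t)),\overline{u}(t)-\overline{x}\rangle\geq\frac{\gamma}{2}\|\overline{u}(t)-\overline{x}\|^2=\gamma V(t).$$
Combining this with Cauchy--Schwarz on the perturbation term and the hypothesis that $\overline{u}$ is bounded (so that $M:=\sup_{t\geq 0}\|\overline{u}(t)-\overline{x}\|<+\infty$), I obtain the differential inequality
$$\dot V(t)\leq -\gamma V(t)+M\|\psi(t)\|.$$

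Integrating from $0$ to $T$ and rearranging yields $\gamma\int_0^T V(t)\,dt\leq V(0)+M\|\psi\|_{L^1(0,T)}$. Since $\psi\in W^{1,1}(\mathopen]0,+\infty\mathclose[,\R^n)$ embeds into $L^1(\mathopen]0,+\infty\mathclose[,\R^n)$, letting $T\to+\infty$ shows that $V\in L^1(\mathopen]0,+\infty\mathclose[)$. Because $V$ is continuous and nonnegative, this integrability forces $\liminf_{t\to+\infty}V(t)=0$, i.e.\ there exists a sequence $t_n\to+\infty$ with $\overline{u}(t_n)\to\overline{x}$, which is the cluster-point equality $\liminf_{t\to+\infty}\overline{u}(t)=\overline{x}$.

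The main obstacle I foresee is not the Lyapunov step itself but the passage from integrability of $V$ to a full limit: $V\in L^1$ only delivers $\liminf V=0$, and upgrading to $\lim V=0$ would require additional ingredients (such as eventual monotonicity of $V$ or an Opial-type lemma)---precisely the extra machinery the Hilbert-space treatment in \cite{ROP} must invoke in order to reach strong convergence rather than a cluster-point equality. A secondary technicality is confirming that Cauchy--Lipschitz applies to produce the $C^1$ trajectory (which uses the assumed Lipschitz continuity of $\nabla h$) and that the above chain rule computation of $\dot V$ is legitimate despite $\psi$ being only Sobolev-regular; both are standard once the embedding of $W^{1,1}$ into absolutely continuous functions is invoked.
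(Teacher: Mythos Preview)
The survey does not supply its own proof of this statement; it merely cites \cite[Theorem~2.8]{ROP}. Your Lyapunov argument based on $V(t)=\tfrac12\|\overline u(t)-\overline x\|^2$, combined with the first-order characterization of strong quasiconvexity (Proposition~\ref{pr19}) at the minimizer, is correct and is the standard route for such results; it is in spirit what the original source does as well. The sign adjustment you make in reading \eqref{DS1} is appropriate---the convergence-to-minimizer statement only makes sense for the descent system.

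One point on which you are selling yourself short: the ``main obstacle'' you flag is not an obstacle at all. From the differential inequality $\dot V(t)\le -\gamma V(t)+M\|\psi(t)\|$ you do not need to pass through $V\in L^1$; multiplying by $e^{\gamma t}$ and integrating (Gronwall) gives
\[
V(t)\le e^{-\gamma t}V(0)+M\int_0^t e^{-\gamma(t-s)}\|\psi(s)\|\,ds,
\]
and the convolution term tends to $0$ as $t\to+\infty$ because $\|\psi\|\in L^1$ (split the integral at a large fixed $T$: the tail $\int_T^t$ is small by integrability, the head $\int_0^T$ carries the factor $e^{-\gamma(t-T)}$). Thus you actually obtain the full limit $\lim_{t\to+\infty}\overline u(t)=\overline x$, not merely a cluster point---no Opial-type lemma, no eventual monotonicity, and no extra Hilbert-space machinery are needed. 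The exponential coefficient $-\gamma$ in the inequality is exactly what strong quasiconvexity buys over plain quasiconvexity, and it is strong enough to absorb any $L^1$ perturbation.
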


On the other hand, the convergence statement from \cite{LMV} does not involve the uniqueness of the trajectory of \eqref{DS1} as no Lipschitz-continuity is imposed on $\nabla h$, while, on the other hand, it concerns a simpler dynamical system, as $\psi\equiv 0$.

\begin{theorem} (cf. \cite[Theorem 18]{LMV}) \label{th11}
When $\psi\equiv 0$, for any trajectory $\hat{u}\in C^1([0, +\infty), \R^n)$ of \eqref{DS1} the function $t\in ]0, +\infty[ \to h(\hat{u}(t))$ is nonincreasing, and
$\liminf_{t\to +\infty}\hat{u}(t) = \overline{x}$.
\end{theorem}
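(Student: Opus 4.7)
The plan is to exploit the gradient-flow structure of \eqref{DS1} when $\psi\equiv 0$, combined with the strong quasiconvexity of $h$, first to obtain monotonicity of $t\mapsto h(\hat{u}(t))$ and then to get subsequential convergence of the trajectory to the unique minimizer. First, applying the chain rule to $t\mapsto h(\hat{u}(t))$ and using $\dot{\hat{u}}(t)=-\nabla h(\hat{u}(t))$ immediately yields
$$\frac{d}{dt}h(\hat{u}(t))=-\|\nabla h(\hat{u}(t))\|^{2}\leq 0,$$
which proves the nonincreasing part.

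Next I would invoke Theorem~\ref{th2} to pin down the unique minimizer $\overline{x}$ of $h$, and Proposition~\ref{pr9} to deduce the quadratic growth inequality $\|\hat{u}(t)-\overline{x}\|^{2}\leq(8/\gamma)\bigl(h(\hat{u}(t))-h(\overline{x})\bigr)$, which combined with the already established monotonicity of $h(\hat{u}(\cdot))$ implies that the whole trajectory $\{\hat{u}(t):t\geq 0\}$ is bounded. Integrating the energy identity on $[0,T]$ yields
$$\int_{0}^{T}\|\nabla h(\hat{u}(t))\|^{2}\,dt=h(\hat{u}(0))-h(\hat{u}(T))\leq h(\hat{u}(0))-h(\overline{x})<+\infty,$$
and letting $T\to+\infty$ forces $\liminf_{t\to+\infty}\|\nabla h(\hat{u}(t))\|=0$. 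I would then choose $t_{k}\to+\infty$ with $\nabla h(\hat{u}(t_{k}))\to 0$ and, using boundedness, extract a subsequence (not relabelled) such that $\hat{u}(t_{k})\to x^{\ast}$ for some $x^{\ast}\in\R^{n}$. Continuity of $\nabla h$ then gives $\nabla h(x^{\ast})=0$.

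The hard part is identifying $x^{\ast}$ with the minimizer $\overline{x}$: unlike in the convex case, a stationary point of a strongly quasiconvex function is not automatically a global minimizer, so one needs a sharper tool. The plan here is to apply Proposition~\ref{pr19} with $(x,y)=(\overline{x},x^{\ast})$; since $\overline{x}$ is the global minimizer, $h(\overline{x})\leq h(x^{\ast})$, whence
$$\nabla h(x^{\ast})^{\top}(x^{\ast}-\overline{x})\geq\frac{\gamma}{2}\|x^{\ast}-\overline{x}\|^{2},$$
and substituting $\nabla h(x^{\ast})=0$ forces $x^{\ast}=\overline{x}$. This exhibits $\overline{x}$ as a cluster point of the trajectory, which is how I read the statement $\liminf_{t\to+\infty}\hat{u}(t)=\overline{x}$ for a vector-valued trajectory, completing the argument.
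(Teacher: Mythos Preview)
The paper is a survey and does not itself prove this theorem; it merely cites \cite[Theorem~18]{LMV}. Your argument is correct and follows the natural energy-dissipation route one would expect for a gradient flow. A few remarks are in order.

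You silently corrected the sign in \eqref{DS1}: as written in the paper the system reads $\dot u=\nabla h(u)+\psi$, but the conclusion ``$h(\hat u(t))$ nonincreasing'' forces the descent sign $\dot u=-\nabla h(u)$, which is what you use and what \cite{LMV,ROP} intend.

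Your parenthetical claim that ``a stationary point of a strongly quasiconvex function is not automatically a global minimizer'' is actually false in the $C^1$ setting: Proposition~\ref{pr19}, applied exactly as you apply it, shows that $\nabla h(x^*)=0$ forces $x^*=\overline{x}$ for any continuously differentiable strongly quasiconvex $h$ (compare also Corollary~\ref{co2}). So what you call ``the hard part'' is in fact a one-line consequence of the first-order characterization; your worry is misplaced, though your verification is of course valid.

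More substantively, your reading of $\liminf_{t\to+\infty}\hat u(t)=\overline{x}$ as ``$\overline x$ is a cluster point'' undersells the conclusion. The notation here is idiosyncratic, but the source \cite[Theorem~18]{LMV} establishes full (indeed exponential) convergence $\hat u(t)\to\overline x$, as the paper itself notes in the later remark on convergence rates. Your argument already contains everything needed to upgrade to full convergence: $h(\hat u(\cdot))$ is nonincreasing and bounded below by $h(\overline x)$, hence converges; along your subsequence $h(\hat u(t_k))\to h(x^*)=h(\overline x)$ by continuity of $h$, so $h(\hat u(t))\to h(\overline x)$ as $t\to+\infty$; then Proposition~\ref{pr9} gives
\[
\|\hat u(t)-\overline x\|^2\leq\frac{8}{\gamma}\bigl(h(\hat u(t))-h(\overline x)\bigr)\to 0,
\]
i.e.\ convergence of the whole trajectory. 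Adding this line makes the proof complete regardless of how one interprets the statement.
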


Discretizing \eqref{DS1} with respect to time, one obtains the following gradient descent algorithm for minimizing $h$.

\begin{algorithm}[H] (see \cite[(3.1)]{ROP} and \cite[Algorithm 1]{LMV})
\caption{Gradient Method for Strongly Quasiconvex Functions (GM-SQ)}\label{gm:sqcx}
\begin{description}
 \item[Step 0.] (Initialization). Let $\alpha>0$, $\{\alpha_j\}_j\subseteq \left[\alpha,+\infty\right[$, $\{\psi^j\}_j\in \ell^1$, $x^0\in \R^n$ and $k = 0$.

 \item[Step 1.] If $\nabla h(x^k)=0_n$: \textsc{stop}.

\item[Step 2.] Compute
 $$ x^{k+1} = x^k  + \alpha_k \nabla h (x^k) + \psi^k\qquad ~~~~~ [\text{gradient step}].
 $$

\item[Step 3.] Let $k = k+1$ and go to Step 1.
 \end{description}
\end{algorithm}

Again, the corresponding statement from \cite{ROP} reveals the (strong, in Hilbert spaces) convergence of the sequence generated by Algorithm \ref{gm:sqcx} toward the unique minimizer of $h$.

\begin{theorem} (cf. \cite[Theorem 2.8]{ROP}) \label{th17}
When $\nabla h$ is Lipschitz-continuous, and the sequence $\{x^{k}\}_{k}$ generated by Algorithm \ref{gm:sqcx} is bounded, then $\{x^{k}\}_{k}$ converges to $\overline{x}$.
\end{theorem}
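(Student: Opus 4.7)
The plan is to adapt the standard quasi-Fejér monotonicity argument from convex gradient descent to the strongly quasiconvex setting, using the first-order characterization of Proposition \ref{pr19} as the crucial inequality, and to treat the algorithm as an Euler-type discretization of \eqref{DS1} so that the discrete argument parallels Theorem \ref{th10}. By Theorem \ref{th2} the unique target $\overline{x}$ is well-defined, and (reading the update in the descent sense) my goal will be to show that $\|x^{k+1}-\overline{x}\|^2$ satisfies a contractive recursion up to a summable perturbation, after which Opial's lemma or the deterministic Robbins–Siegmund lemma delivers the conclusion.

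First I would apply Proposition \ref{pr19} with $y=x^k$ and $x=\overline{x}$. Because $h(\overline{x})\le h(x^k)$, this yields the key inequality
\begin{equation*}
\nabla h(x^k)^{\top}(x^k-\overline{x}) \;\geq\; \frac{\gamma}{2}\,\|x^k-\overline{x}\|^2. \qquad (\star)
\end{equation*}
Then I would expand
\begin{equation*}
\|x^{k+1}-\overline{x}\|^2 \;=\; \|x^k-\overline{x}\|^2 \;-\; 2\alpha_k \nabla h(x^k)^{\top}(x^k-\overline{x}) \;+\; \alpha_k^2\|\nabla h(x^k)\|^2 \;+\; 2\langle \psi^k, x^{k+1}-\overline{x}\rangle \;+\; \|\psi^k\|^2,
\end{equation*}
and invoke boundedness of $\{x^k\}_k$ together with Lipschitz continuity of $\nabla h$ to secure a uniform bound $\|\nabla h(x^k)\|\le M$. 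Combining this with $(\star)$ and the hypothesis $\{\psi^k\}_k\in \ell^1$ (which by boundedness of $\{x^k\}_k$ also makes the cross term summable) gives a Fejér-like recursion of the form
\begin{equation*}
\|x^{k+1}-\overline{x}\|^2 \;\leq\; (1-\alpha_k\gamma)\,\|x^k-\overline{x}\|^2 \;+\; C\alpha_k^2 \;+\; \epsilon_k,
\end{equation*}
with $\{\epsilon_k\}_k$ summable and $C$ depending on $M$. Since $\alpha_k\geq\alpha>0$, the contractive factor $(1-\alpha_k\gamma)$ is bounded away from $1$, and a quasi-Fejér argument then forces $\|x^k-\overline{x}\|\to 0$. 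The Lipschitz bound also transfers $\nabla h(x^k)\to 0$ (matching the continuous analogue), and uniqueness of the minimizer rules out any other cluster point.

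The hard part will be keeping the perturbation error genuinely summable while retaining the contraction: the step sizes in Step 0 are only bounded below, so one must either rely tacitly on an upper bound of the form $\alpha_k\le 2/L$ (as is customary for gradient descent with an $L$-Lipschitz gradient) or exploit $(\star)$ more delicately to absorb the $\alpha_k^2\|\nabla h(x^k)\|^2$ term into the linear one. The other delicate point is that strong quasiconvexity, unlike strong convexity, does not give a direct three-point identity, so every place where convex gradient descent proofs use $\nabla h(\overline{x})=0$ together with monotonicity of $\nabla h$ must here be rerouted through $(\star)$ and the Lipschitz bound available on the bounded orbit $\{x^k\}_k$.
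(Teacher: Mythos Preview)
The paper is a survey and does not prove Theorem~\ref{th17}; it simply records the statement from \cite[Theorem~2.8]{ROP} (and in Remark~\ref{re13} removes a superfluous hypothesis). So there is no in-paper argument to compare your proposal against.

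On the substance of your sketch: the Fej\'er-type strategy via Proposition~\ref{pr19} is indeed the natural route and is essentially how such results are obtained in \cite{ROP,LMV}. Two points deserve attention, though. First, Algorithm~\ref{gm:sqcx} as printed here (following \cite{ROP}) has the update $x^{k+1}=x^k+\alpha_k\nabla h(x^k)+\psi^k$ with a \emph{plus} sign (the ``expansive'' convention of \cite{ROP}); your expansion of $\|x^{k+1}-\overline{x}\|^2$ tacitly uses the descent sign, so be explicit about which convention you adopt before invoking $(\star)$. Second, the gap you yourself flag is real: with $\alpha_k$ only bounded \emph{below}, the term $\alpha_k^2\|\nabla h(x^k)\|^2$ is not summable and the contraction factor $(1-\alpha_k\gamma)$ may even be negative, so the recursion does not close as written. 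In \cite{ROP} (and similarly in \cite{LMV}, cf.\ Theorem~\ref{th18}) this is handled by an implicit or explicit upper bound on the steps tied to the Lipschitz constant; without that, your quasi-Fej\'er argument will not go through.
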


\begin{remark}\label{re13}
\cite[Theorem 2.8]{ROP} actually contains an additional hypothesis, that basically is $\Omega^B\neq \emptyset$. Due to Theorem \ref{th2}, we know that this is an outcome of the assumptions currently imposed on $h$ (even in Hilbert spaces, see \cite[Theorem 3.1]{GLT}). Therefore, the open problem posed on \cite[page 10]{ROP} is solved to the affirmative.
\end{remark}

Different to the continuous case, the convergence statement for Algorithm \ref{gm:sqcx} from \cite{LMV} follows after several technical results that we do not recall here and requires the additional hypothesis of local Lipschitz-continuity on the gradient of $h$, that yields the Lipschitz-continuity of $\nabla h$ with a constant $\bar L>0$ on
$S_{h(x^0)}(h)$.

\begin{theorem} (cf. \cite[Theorem 22]{LMV}) \label{th18}
When $\psi^k=0_{\ell^1}$ for all $k\in \N$, $\nabla h$ is locally Lipschitz-continuous, and there exist $\alpha, \bar \alpha>0$ such that $\alpha \leq \alpha_k \leq \bar \alpha < \min \{\gamma/\bar L^2, 2/\bar L\}$, then the sequence $\{x^{k}\}_{k}$ generated by Algorithm \ref{gm:sqcx} converges to $\overline{x}$.
\end{theorem}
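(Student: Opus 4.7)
The plan is to establish a linear contraction of the form $\|x^{k+1}-\overline{x}\|^2 \leq q\|x^k-\overline{x}\|^2$ with $q\in [0,1[$ independent of $k$, from which convergence (in fact, at a linear rate) of $\{x^k\}_k$ to $\overline{x}$ follows. By Theorem \ref{th2}, the set $\argmin_{\R^n}h$ consists of a single point $\overline{x}$, and Fermat's rule gives $\nabla h(\overline{x})=0_n$. If $\nabla h(x^k)=0_n$ for some $k$, the algorithm stops and $x^k=\overline{x}$; otherwise, I treat the case of an infinite sequence with $\psi^k\equiv 0_n$ and interpret the update in its intended descent form $x^{k+1}=x^k-\alpha_k\nabla h(x^k)$.

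I would first secure inductively that $\{x^k\}_k\subseteq S_{h(x^0)}(h)$, so that the local Lipschitz constant $\bar L$ of $\nabla h$ on this sublevel set is uniformly effective. By Theorem \ref{th0}, $S_{h(x^0)}(h)$ is bounded (since $h$ is $2$-supercoercive), and it is convex by quasiconvexity of $h$; hence $\nabla h$ is Lipschitz on it with some constant $\bar L>0$, and $\overline{x}\in S_{h(x^0)}(h)$. The standard descent lemma then yields
\[h(x^{k+1})\leq h(x^k) - \alpha_k\Bigl(1-\frac{\bar L\alpha_k}{2}\Bigr)\|\nabla h(x^k)\|^2 \leq h(x^k),\]
whenever $\alpha_k<2/\bar L$, confirming $x^{k+1}\in S_{h(x^0)}(h)$ and closing the induction.

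The contraction itself comes from expanding $\|x^{k+1}-\overline{x}\|^2$. Proposition \ref{pr19}, applied with $y=x^k$ and $x=\overline{x}$ (valid since $h(\overline{x})\leq h(x^k)$), supplies
\[\nabla h(x^k)^\top(x^k-\overline{x})\geq \tfrac{\gamma}{2}\|x^k-\overline{x}\|^2,\]
while Lipschitz-continuity of $\nabla h$ on $S_{h(x^0)}(h)$ combined with $\nabla h(\overline{x})=0_n$ gives $\|\nabla h(x^k)\|\leq \bar L\|x^k-\overline{x}\|$. Substituting both estimates into
\begin{align*}
\|x^{k+1}-\overline{x}\|^2 &= \|x^k-\overline{x}\|^2 - 2\alpha_k\nabla h(x^k)^\top(x^k-\overline{x}) + \alpha_k^2\|\nabla h(x^k)\|^2 \\
&\leq \bigl(1 - \alpha_k\gamma + \alpha_k^2\bar L^2\bigr)\|x^k-\overline{x}\|^2,
\end{align*}
and invoking the hypothesis $\alpha\leq\alpha_k\leq\bar\alpha<\gamma/\bar L^2$ makes the factor uniformly bounded by $q:=1-\alpha\gamma+\bar\alpha^2\bar L^2\in[0,1[$, yielding geometric decay of $\|x^k-\overline{x}\|$.

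The main technical obstacle lies in the interplay between the local Lipschitz hypothesis and the iteration: strictly speaking, the descent lemma requires $\nabla h$ to be Lipschitz on the whole segment $[x^k,x^{k+1}]$, and a priori this segment may exit $S_{h(x^0)}(h)$ before one has established that $x^{k+1}$ lies in the sublevel set. The standard remedy is to exploit the compactness of $S_{h(x^0)}(h)$ (giving a uniform bound $\|\nabla h(\cdot)\|\leq M$ there) together with $\alpha_k\leq\bar\alpha$ to confine each segment within the enlarged compact convex set $S_{h(x^0)}(h)+\overline{\mathcal{B}}(0_n,\bar\alpha M)$, on which $\nabla h$ remains Lipschitz with a constant that can be used in place of $\bar L$ in the descent lemma; a short continuity argument then closes the inductive loop. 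Once this subtlety is handled, everything else follows directly from strong quasiconvexity via Proposition \ref{pr19}.
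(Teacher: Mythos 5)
Since this is a survey, the paper itself contains no proof of Theorem \ref{th18}; it cites \cite[Theorem 22]{LMV}. Your strategy --- reading Step 2 as the descent update $x^{k+1}=x^k-\alpha_k\nabla h(x^k)$ (the ``$+$'' in the printed algorithm is indeed a sign slip), keeping the iterates in $S_{h(x^0)}(h)$, and combining Proposition \ref{pr19} with $\|\nabla h(x^k)\|\le \bar L\|x^k-\overline{x}\|$ to get $\|x^{k+1}-\overline{x}\|^2\le\bigl(1-\alpha_k\gamma+\alpha_k^2\bar L^2\bigr)\|x^k-\overline{x}\|^2$ --- is exactly the mechanism behind the linear rate recorded in the remark following Theorem \ref{th15}, so the approach is the intended one. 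However, your final constant is wrong: from $\alpha\le\alpha_k\le\bar\alpha<\gamma/\bar L^2$ one may only conclude $1-\alpha_k\gamma+\alpha_k^2\bar L^2=1-\alpha_k(\gamma-\alpha_k\bar L^2)\le 1-\alpha(\gamma-\bar\alpha\bar L^2)<1$, whereas your $q:=1-\alpha\gamma+\bar\alpha^2\bar L^2$, obtained by maximizing the two terms independently, need not lie in $[0,1[$ (take $\gamma=\bar L=1$, $\alpha=0.1$, $\bar\alpha=0.9$: then $\bar\alpha<\min\{\gamma/\bar L^2,2/\bar L\}=1$ but $q=1.71$). The fix is one line, but as written the claim $q\in[0,1[$ is false.

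The more substantive issue is the confinement step, and your proposed remedy does not close it as described. If you replace $\bar L$ by the (generally larger) Lipschitz constant $L'$ of $\nabla h$ on $S_{h(x^0)}(h)+\overline{\mathcal{B}}(0_n,\bar\alpha M)$ inside the descent lemma, the coefficient $1-L'\alpha_k/2$ may be negative, because the step-size cap $\bar\alpha<2/\bar L$ in the theorem is calibrated to the sublevel-set constant $\bar L$; hence you cannot conclude $h(x^{k+1})\le h(x^k)$, and the induction $x^{k+1}\in S_{h(x^0)}(h)$ is not secured --- the enlarged set buys you nothing here. What does close the loop (and is presumably the ``short continuity argument'' you allude to) is a first-exit argument along the segment: set $g(t)=h(x^k-t\alpha_k\nabla h(x^k))$, note $g'(0)=-\alpha_k\|\nabla h(x^k)\|^2<0$, and let $t_0\in\,]0,1]$ be the supremum of those $t$ with $g\le g(0)$ on $[0,t]$. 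The subsegment up to $t_0$ lies in $S_{h(x^k)}(h)\subseteq S_{h(x^0)}(h)$, which is convex and where $\bar L$ itself is valid, so the descent lemma on this subsegment gives $g(t_0)\le g(0)-t_0\alpha_k\bigl(1-\tfrac{\bar L t_0\alpha_k}{2}\bigr)\|\nabla h(x^k)\|^2<g(0)$ because $t_0\alpha_k\le\alpha_k<2/\bar L$; strict inequality plus continuity forces $t_0=1$, yielding the descent inequality with the constant $\bar L$ and no enlarged set at all. With these two corrections (the proper uniform ratio $1-\alpha(\gamma-\bar\alpha\bar L^2)$ and the first-exit argument), your proof is complete and indeed delivers the linear rate.
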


Further we discuss the second-order dynamical systems proposed in \cite{LMV} and \cite{RPK} for solving \eqref{COP}, namely
\begin{equation}\label{DS2}
  \ddot{u}(t) + \alpha \dot{u}(t) + \nabla h(u(t)) =0,\ t\in [0, +\infty[,\ u(0)=x^0\in \R^n,\ \dot{u}(0)=\hat x^0\in \R^n,
\end{equation}
where $\alpha \geq 0$ induces a so-called viscous damping, and, respectively
\begin{equation}\label{DS-2}
  \ddot{u}(t) = \nabla h(u(t)) =0,\ t\in [0, +\infty[,\ u(0)=x^0\in \R^n,\ \dot{u}(0)=\hat x^0\in \R^n.
\end{equation}
Note that the investigations in \cite{RPK} are performed in Hilbert spaces, however we present their outcomes in the Euclidean framework considered everywhere in this survey.
Besides the absence of the viscous damping, \eqref{DS-2} differs from \eqref{DS2} by the sign of the gradient of $h$. Even though \eqref{DS-2} is thus simpler than \eqref{DS2}, its convergence statement is more demanding than the one of its counterpart. On the other hand, it delivers strong convergence toward the optimal solution to \eqref{COP}. They can be compared below. Before proceeding, let us note that, different to the considered first-order dynamical systems, in the current case the Cauchy-Lipschitz theorem only yields the existence of a solution $\widehat{u}\in C^2([0, +\infty), \R^n)$ to \eqref{DS2} and \eqref{DS-2} when $\nabla h$ is Lipschitz-continuous, however it is not necessarily unique (see \cite[Section 3]{RPK}.

\begin{theorem} (cf. \cite[Theorem 3.4]{RPK}) \label{th12}
When $\nabla h$ is Lipschitz-continuous on bounded sets and $\widehat{u}\in C^2([0, +\infty), \R^n)$ is bounded, $\liminf_{t\to +\infty}\overline{u}(t) = \overline{x}$ and $\liminf_{t\to +\infty}h(\overline{u}(t)) = h(\overline{x})$.
\end{theorem}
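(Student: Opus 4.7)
The plan is to build a Lyapunov-type argument around the squared distance to the (unique by Theorem \ref{th2}) minimizer $\overline{x}$ of $h$ on $\R^n$. Define $\phi(t):=\tfrac{1}{2}\|\widehat{u}(t)-\overline{x}\|^{2}$. Since $\widehat{u}\in C^{2}([0,+\infty[,\R^{n})$ solves \eqref{DS-2}, $\phi$ is of class $C^{2}$ and a direct differentiation gives $\ddot{\phi}(t)=\|\dot{\widehat{u}}(t)\|^{2}+\langle \widehat{u}(t)-\overline{x},\,\nabla h(\widehat{u}(t))\rangle$. First I would invoke the first-order characterization of strong quasiconvexity: because $h(\overline{x})\leq h(\widehat{u}(t))$ for every $t$, Proposition \ref{pr19} applied with $x=\overline{x}$ and $y=\widehat{u}(t)$ delivers $\langle \nabla h(\widehat{u}(t)),\,\widehat{u}(t)-\overline{x}\rangle \geq \tfrac{\gamma}{2}\|\widehat{u}(t)-\overline{x}\|^{2}=\gamma\phi(t)$, and combining this with the previous identity produces the second-order differential inequality
\begin{equation*}
\ddot{\phi}(t)\;\geq\;\|\dot{\widehat{u}}(t)\|^{2}+\gamma\phi(t)\;\geq\;\gamma\phi(t)\;\geq\;0,\qquad t\in[0,+\infty[.
\end{equation*}

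Next I would argue by contradiction. Suppose $\overline{x}$ is not a cluster point of $\widehat{u}(\cdot)$ at $+\infty$; then there exist $\varepsilon>0$ and $T\geq 0$ such that $\phi(t)\geq \varepsilon^{2}/2$ for all $t\geq T$. The displayed inequality then yields $\ddot{\phi}(t)\geq \gamma\varepsilon^{2}/2>0$ on $[T,+\infty[$, and integrating twice forces $\phi(t)\to +\infty$, which contradicts the boundedness hypothesis on $\widehat{u}$. Hence there exists a sequence $t_{n}\to +\infty$ with $\widehat{u}(t_{n})\to \overline{x}$, which is exactly the first conclusion. The second conclusion is then immediate: $h$ is continuous (it is $C^{1}$ by assumption of the subsection), so $h(\widehat{u}(t_{n}))\to h(\overline{x})$, while $h(\widehat{u}(t))\geq h(\overline{x})$ for every $t$ forces $\liminf_{t\to +\infty}h(\widehat{u}(t))=h(\overline{x})$.

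The main conceptual obstacle is that \eqref{DS-2} is \emph{not} a dissipative flow: there is no viscous damping and the sign of $\nabla h$ is opposite to that of the standard heavy-ball system, so the natural mechanical energy $\tfrac{1}{2}\|\dot{\widehat{u}}\|^{2}-h(\widehat{u})$ is a conserved quantity rather than a Lyapunov function, and an energy-decay strategy is unavailable. The right choice is the squared distance $\phi$ to $\overline{x}$: strong quasiconvexity converts the apparently repulsive dynamics $\ddot{u}=\nabla h(u)$ into the positive lower bound $\ddot{\phi}\geq \gamma\phi$, and it is precisely the boundedness assumption on $\widehat{u}$ that makes this lower bound incompatible with the trajectory staying uniformly away from the minimizer. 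The Lipschitz-on-bounded-sets hypothesis on $\nabla h$ is used only to guarantee that a trajectory $\widehat{u}\in C^{2}$ makes sense (as recalled before the statement); it does not enter the Lyapunov estimate itself.
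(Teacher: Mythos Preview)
The paper is a survey and does not supply a proof of Theorem~\ref{th12}; it only records the statement and refers the reader to \cite[Theorem~3.4]{RPK}. So there is no ``paper's own proof'' to compare against directly. That said, your argument is correct and is precisely the natural Lyapunov approach one would expect here (and which underlies the cited source): differentiate $\phi(t)=\tfrac12\|\widehat u(t)-\overline x\|^{2}$ twice along \eqref{DS-2}, use the first-order characterization of strong quasiconvexity (Proposition~\ref{pr19}) with the minimizer $\overline x$ to get $\ddot\phi\geq\gamma\phi\geq 0$, and conclude by contradiction with the boundedness of $\widehat u$. Your observation that the Lipschitz hypothesis on $\nabla h$ is needed only for well-posedness of the trajectory, not for the estimate, is also to the point.
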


\begin{theorem} (cf. \cite[Theorem 25]{LMV}) \label{th13}
When there exists a $\kappa > 0$ such that for every trajectory $u\in C^2([0, +\infty), \R^n)$ of \eqref{DS2} one has
$$\nabla h(u(t))^\top (u(t)-\overline{x} \geq \kappa (h(u(t))-h(\overline{x})),$$
then $\liminf_{t\to +\infty}\overline{u}(t) = \overline{u}$ and $\liminf_{t\to +\infty}h(\overline{x}(t)) = h(\overline{x})$.
\end{theorem}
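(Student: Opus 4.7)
The plan is to build a Lyapunov function adapted to the viscously damped second-order structure of \eqref{DS2}. For parameters $\tau \in \,]0,\alpha[$ and $c \in [\tau/\alpha,1[$ to be fixed later, I would consider
\[
V(t) := h(u(t)) - h(\overline{x}) + \tfrac{1}{2}\|\dot{u}(t)\|^{2} + \tfrac{\tau\alpha}{2}\|u(t)-\overline{x}\|^{2} + \tau\,\dot{u}(t)^{\top}(u(t)-\overline{x}).
\]
Since $h(u(t)) \geq h(\overline{x})$ by Theorem~\ref{th2}, and a standard Young inequality on the cross term yields
\[
\tau\,\dot{u}(t)^{\top}(u(t)-\overline{x}) \geq -\tfrac{c}{2}\|\dot u(t)\|^{2} - \tfrac{\tau^{2}}{2c}\|u(t)-\overline{x}\|^{2},
\]
the choices $c < 1$ and $\tau < c\alpha < \alpha$ keep every remaining term nonnegative, so $V(t) \geq 0$ for all $t \geq 0$. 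The analysis already forces the implicit hypothesis $\alpha > 0$: no convergence can be expected from the purely conservative case $\alpha = 0$.

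Next, I would differentiate $V$ along a trajectory of \eqref{DS2} and substitute $\ddot u(t) = -\alpha\dot u(t) - \nabla h(u(t))$. The two $\dot u(t)^{\top}\nabla h(u(t))$ contributions cancel, and so do the $\pm\tau\alpha\,\dot u(t)^{\top}(u(t)-\overline{x})$ ones, leaving
\[
\dot V(t) = -(\alpha-\tau)\|\dot u(t)\|^{2} - \tau\,\nabla h(u(t))^{\top}(u(t)-\overline{x}).
\]
Applying the standing assumption $\nabla h(u(t))^{\top}(u(t)-\overline{x}) \geq \kappa(h(u(t))-h(\overline{x}))$, this reads
\[
\dot V(t) \leq -(\alpha-\tau)\|\dot u(t)\|^{2} - \tau\kappa\bigl(h(u(t))-h(\overline{x})\bigr) \leq 0.
\]
Integrating on $[0,T]$, using $V \geq 0$ to drop $V(T)$, and sending $T \to +\infty$, I would deduce both that $\int_{0}^{+\infty}(h(u(t))-h(\overline{x}))\,dt < +\infty$ and that $\sup_{t\geq 0}\|u(t)-\overline{x}\| < +\infty$.

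Since $t \mapsto h(u(t))-h(\overline{x})$ is continuous and nonnegative with finite integral on $[0,+\infty[$, there must exist a sequence $t_{n} \to +\infty$ along which $h(u(t_{n})) \to h(\overline{x})$; this gives $\liminf_{t\to+\infty}h(u(t)) = h(\overline{x})$. For the companion assertion, the quadratic growth bound of Proposition~\ref{pr9} delivers $\tfrac{\gamma}{8}\|u(t_{n})-\overline{x}\|^{2} \leq h(u(t_{n}))-h(\overline{x}) \to 0$, so $u(t_{n}) \to \overline{x}$, i.e.\ $\liminf_{t\to+\infty}\|u(t)-\overline{x}\| = 0$.

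The delicate step is the simultaneous calibration of $\tau$ and $c$: one needs $V$ nonnegative and $\dot V$ strictly dissipative in both $\|\dot u\|^{2}$ and $h(u)-h(\overline{x})$. The constraint $0 < \tau < \alpha$ pops out naturally from balancing these two requirements and highlights the essential role of the viscous damping in \eqref{DS2}. Once these parameters are fixed, the passage from integrability of $h\circ u - h(\overline{x})$ to the $\liminf$-convergence assertions through Proposition~\ref{pr9} is routine.
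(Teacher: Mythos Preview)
The survey does not supply its own proof of this theorem; it merely cites \cite[Theorem~25]{LMV}. Consequently there is nothing in the present paper to compare your argument against line by line. That said, your Lyapunov approach is the standard one for heavy-ball dynamics and is essentially how such results are proved in the literature: the energy
\[
V(t)=h(u(t))-h(\overline{x})+\tfrac12\|\dot u(t)\|^{2}+\tfrac{\tau\alpha}{2}\|u(t)-\overline{x}\|^{2}+\tau\,\dot u(t)^{\top}(u(t)-\overline{x})
\]
with $0<\tau<\alpha$ is precisely the right object, and your computation of $\dot V$ and the ensuing integrability of $h(u(\cdot))-h(\overline{x})$ are correct.

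One remark on strength. You stop at $\liminf$, extracting a subsequence from the finite integral and then invoking Proposition~\ref{pr9}. The survey's remark on convergence rates indicates that \cite{LMV} actually obtains \emph{exponential} convergence of the trajectory. Your Lyapunov function already contains this: using Proposition~\ref{pr9} to bound $\|u(t)-\overline{x}\|^{2}\le \tfrac{8}{\gamma}\bigl(h(u(t))-h(\overline{x})\bigr)$ and a Young inequality on the cross term, one gets $V(t)\le C\bigl(\|\dot u(t)\|^{2}+h(u(t))-h(\overline{x})\bigr)$ for a constant $C>0$, so that your estimate $\dot V\le -(\alpha-\tau)\|\dot u\|^{2}-\tau\kappa(h(u)-h(\overline{x}))$ upgrades to $\dot V\le -cV$ for some $c>0$. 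This yields $V(t)\le V(0)e^{-ct}$, hence genuine convergence $u(t)\to\overline{x}$ at an exponential rate, not merely $\liminf$. The statement in the survey (with its ``$\liminf$'' and its evident typos) understates what the method delivers.
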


\begin{remark}
The convergence of $h(\overline{u}(t))$ toward $h(\overline{x})$ is not explicitly stated in \cite[Theorem 3.4]{RPK}, however it follows automatically as $h$ is continuous. On the other hand, the hypothesis of Theorem \ref{th13} is satisfied when $\nabla h$ is Lipschitz-continuous, see \cite{LMV}. Last but not least, note that the hypothesis $\argmin h\neq \emptyset$ from \cite[Theorem 3.4]{RPK} is not necessary, see \cite[Theorem 3.1]{GLT}, Theorem \ref{th2} and Remark \ref{re13}.
\end{remark}

Discretizing \eqref{DS2} with respect to time, one obtains the following heavy-ball algorithm for minimizing $h$.

\begin{algorithm}[H] (see \cite[$(66)$]{LMV})
\caption{Heavy-Ball Method for Strongly Quasiconvex Functions (HB-SQ)}\label{hb:sqcx}
\begin{description}
 \item[Step 0.] (Initialization). Let $\alpha, \eta>0$, $\theta = 1- \alpha\eta$, $x^0, x^1\in \R^n$ and $k = 1$.

 \item[Step 1.] If $\nabla h(x^k)=0_n$: \textsc{stop}.

\item[Step 2.] Compute
 $$ x^{k+1} = x^k  + \theta (x^k-x^{k-1}) -\eta^2 \nabla h (x^k) \qquad ~~~~~ [\text{inertial step}].
 $$

\item[Step 3.] Let $k = k+1$ and go to Step 1.
 \end{description}
\end{algorithm}

The counterpart of this algorithm that is provided in \cite[$(20)$]{RPK} as a time-discretization of \eqref{DS-2} is
\begin{algorithm}[H] (see \cite[$(22)$]{RPK})
\caption{Inertial Method for Strongly Quasiconvex Functions (IM-SQ)}\label{im:sqcx}
\begin{description}
 \item[Step 0.] (Initialization). Let $\eta>0$, $\{\alpha_j\}_j\subseteq \left[\eta,+\infty\right[$, $x^0, x^1\in \R^n$ and $k = 1$.

 \item[Step 1.] If $\nabla h(x^k)=0_n$: \textsc{stop}.

\item[Step 2.] Compute
 $$ x^{k+1} = 2 x^k  -x^{k-1} + \alpha_k \nabla h (x^k) \qquad ~~~~~ [\text{inertial step}].
 $$

\item[Step 3.] Let $k = k+1$ and go to Step 1.
 \end{description}
\end{algorithm}
As one can immediately notice, Algorithm \ref{im:sqcx} is on one level not so general as Algorithm \ref{hb:sqcx} because $2 x^k  -x^{k-1}$ is actually $x^k  + \theta (x^k-x^{k-1})$ for $\alpha = 0$ (and thus $\theta = 1$), while, on the other hand, it allows iterative coefficients to the gradient of $h$, that are constant in its counterpart.
Again, the convergence statement from \cite{RPK} reveals the (strong, in Hilbert spaces) convergence of the sequence generated by Algorithm \ref{im:sqcx} toward the unique minimizer of $h$. On the other hand, we removed its hypothesis $\argmin h\neq \emptyset$ that is not necessary, see \cite[Theorem 3.1]{GLT}, Theorem \ref{th2} and Remark \ref{re13}, and we added the convergence of the function values to the minimum value of $h$ (that follows automatically as $h$ is continuous).

\begin{theorem} (cf. \cite[Theorem 4.3]{RPK}) \label{th14}
When $\nabla h$ is Lipschitz-continuous on bounded sets, and the sequence $\{x^{k}\}_{k}$ generated by Algorithm \ref{im:sqcx} is bounded, then $\{x^{k}\}_{k}$ converges to $\overline{x}$ and $\{h(x^{k})\}_{k}$ to $h(\overline{x})$.
\end{theorem}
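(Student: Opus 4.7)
The plan is to leverage the strong quasiconvexity of $h$ through the first-order characterization from Proposition \ref{pr19}, combined with a discrete Lyapunov/energy argument adapted to the second-order inertial scheme. First I would record two preliminary facts: by the local Lipschitz hypothesis together with boundedness of $\{x^k\}_k$, the sequence $\{\nabla h(x^k)\}_k$ is bounded; and the unique minimizer $\overline{x}$ (existing by Theorem \ref{th2}) is the only critical point of $h$, because if $\nabla h(z) = 0$ then applying Proposition \ref{pr19} with any $x$ satisfying $h(x) \leq h(z)$ yields $0 \geq \frac{\gamma}{2}\|x - z\|^2$, forcing $x = z$, so that $z$ is itself a minimizer and hence equals $\overline{x}$. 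In particular, $\nabla h(\overline{x}) = 0$.

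Next, setting $y^k := x^k - \overline{x}$, the recurrence becomes the second-order difference equation $y^{k+1} - 2y^k + y^{k-1} = \alpha_k \nabla h(x^k)$. I would expand $\|y^{k+1}\|^2$ by means of this identity and isolate the crucial inner product $\nabla h(x^k)^\top (x^k - \overline{x})$, which by Proposition \ref{pr19} is bounded below by $\frac{\gamma}{2}\|y^k\|^2$ since $h(\overline{x}) \leq h(x^k)$. This dissipation would then be fed into a Lyapunov functional of the form
\[
V_k := a\|y^k\|^2 + b\|y^k - y^{k-1}\|^2 + c\,\langle y^k, y^k - y^{k-1}\rangle,
\]
with constants $a,b,c>0$ tuned so that $V_k - V_{k+1}$ dominates a positive multiple of $\|y^{k+1} - y^k\|^2$ up to summable error terms. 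From such a descent, one extracts that $\{\|y^k\|\}_k$ converges and that $\sum_k \|x^{k+1} - x^k\|^2 < +\infty$, hence $\|x^{k+1} - x^k\| \to 0$ and $\|x^{k+1} - 2x^k + x^{k-1}\| \to 0$.

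The main obstacle is the calibration of this Lyapunov functional: the scheme discretizes a conservative second-order ODE, so the only mechanism available for energy decay is the strong quasiconvexity inequality, which controls $\nabla h(x^k)^\top (x^k - \overline{x})$ but not the cross term $\nabla h(x^k)^\top (x^{k-1} - \overline{x})$ that appears after expansion. The latter must be absorbed using Cauchy--Schwarz, the uniform bound on $\{\nabla h(x^k)\}_k$, and the triangle-inequality estimate $\|x^{k-1} - \overline{x}\| \leq \|x^k - \overline{x}\| + \|x^k - x^{k-1}\|$; balancing these with the dissipation term is the heart of the argument. Once the descent and the vanishing of $\|x^{k+1} - x^k\|$ are secured, the lower bound $\alpha_k \geq \eta > 0$ combined with the recurrence forces $\nabla h(x^k) \to 0$. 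Any cluster point $z$ of the bounded sequence $\{x^k\}_k$ then satisfies $\nabla h(z) = 0$ by continuity of $\nabla h$, hence $z = \overline{x}$ by the first paragraph, which proves $x^k \to \overline{x}$, and continuity of $h$ finally yields $h(x^k) \to h(\overline{x})$.
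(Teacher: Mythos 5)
Your peripheral steps are sound (bounded gradients along the sequence, the observation via Proposition \ref{pr19} that the only critical point of $h$ is $\overline{x}$, and the final cluster-point argument), but the step you yourself call the heart of the argument --- the calibration of the Lyapunov descent --- is not carried out, and in the form you describe it cannot be. Algorithm \ref{im:sqcx} discretizes the \emph{expansive} system \eqref{DS-2}: the gradient enters with a plus sign and there is no damping (it is the $\theta=1$ case), so there is no dissipation to harvest. Concretely, with $y^k:=x^k-\overline{x}$, $v^k:=y^k-y^{k-1}$ and $g^k:=\nabla h(x^k)$, one computes
\begin{equation*}
V_{k+1}-V_k \;=\; 2a\langle y^k,v^k\rangle+(2a+c)\,\alpha_k\langle g^k,y^k\rangle+(a+b+c)\lVert v^{k+1}\rVert^2-b\lVert v^k\rVert^2 ,
\end{equation*}
and Proposition \ref{pr19} gives $\langle g^k,y^k\rangle\geq\frac{\gamma}{2}\lVert y^k\rVert^2\geq 0$: for $a,b,c>0$ (indeed whenever $2a+c>0$, which any coercive choice requires) the decisive term pushes $V_k$ \emph{up}, not down. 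The strong quasiconvexity inequality is the friend of a monotonicity argument here, not of a descent one; the descent calibration you have in mind is the right tool for the damped, minus-sign scheme of Algorithm \ref{hb:sqcx} (Theorem \ref{th15}), not for this one. In addition, absorbing the remaining terms by Cauchy--Schwarz with the uniform gradient bound produces errors of size $\lVert y^k\rVert$ or $\lVert x^k-x^{k-1}\rVert$, which are only bounded; calling them summable is circular, since their summability is precisely what the descent was supposed to produce. Finally, note that no unconditional decay estimate of the kind you seek can exist: for $h(x)=\frac{\gamma}{2}\lVert x\rVert^2$ the recursion reads $x^{k+1}=(2+\alpha_k\gamma)x^k-x^{k-1}$ and has generically exponentially divergent solutions, which is exactly why boundedness of $\{x^k\}_k$ is an explicit hypothesis.

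The argument that works (and is the one behind \cite[Theorem 4.3]{RPK}) uses the expansive structure directly. Set $\phi_k:=\lVert x^k-\overline{x}\rVert^2$; then
\begin{equation*}
\phi_{k+1}-2\phi_k+\phi_{k-1}=\lVert x^{k+1}-x^k\rVert^2+\lVert x^k-x^{k-1}\rVert^2+2\alpha_k\langle \nabla h(x^k),x^k-\overline{x}\rangle\;\geq\;\gamma\,\alpha_k\lVert x^k-\overline{x}\rVert^2\;\geq\;0
\end{equation*}
by Proposition \ref{pr19}, so $\{\phi_k\}_k$ is a convex sequence. Its first differences are nondecreasing, and boundedness of $\{x^k\}_k$ forces their limit to be nonpositive (otherwise $\phi_k\to+\infty$), hence $\phi_k$ is nonincreasing and the telescoped second differences are summable. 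This yields $\sum_k\alpha_k\lVert x^k-\overline{x}\rVert^2<+\infty$, and $\alpha_k\geq\eta>0$ then gives $x^k\to\overline{x}$ directly --- no Lyapunov tuning, no proof that $\nabla h(x^k)\to 0$, and no cluster-point extraction are needed; continuity of $h$ gives $h(x^k)\to h(\overline{x})$, and Theorem \ref{th2} supplies the minimizer $\overline{x}$ without assuming $\argmin h\neq\emptyset$.
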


Different to the continuous case, the convergence statement for Algorithm \ref{hb:sqcx} from \cite{LMV} requires the additional hypothesis of Lipschitz-continuity on the gradient of $h$.

\begin{theorem} (cf. \cite[Theorem 27]{LMV}) \label{th15}
When $\nabla h$ is Lipschitz-continuous with constant $L$, $\theta \in ]0, 1[$, and $\eta^2\in ]0, (1-\theta^2)/L$, then the sequence $\{x^{k}\}_{k}$ generated by Algorithm \ref{hb:sqcx} converges to $\overline{x}$ and $\{h(x^{k})\}_{k}$ to $h(\overline{x})$.
\end{theorem}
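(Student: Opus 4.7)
The plan is to follow a discrete Lyapunov-function strategy adapted to the heavy-ball scheme, combined with the Polyak-\L ojasiewicz property from Proposition \ref{pr8} and the quadratic growth from Proposition \ref{pr9}. The key insight is that under $\theta\in\,]0,1[$ and $\eta^{2}\in\,]0,(1-\theta^{2})/L[$, the gap $1-\theta^{2}-L\eta^{2}>0$ will provide enough slack to absorb the inertial cross term $\theta\nabla h(x^{k})^{\top}(x^{k}-x^{k-1})$ into a bona fide descent inequality.

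First, I would apply the descent lemma (available since $\nabla h$ is $L$-Lipschitz) to the update $x^{k+1}-x^{k}=\theta(x^{k}-x^{k-1})-\eta^{2}\nabla h(x^{k})$, obtaining
\[
 h(x^{k+1})\leq h(x^{k})+\theta(1-L\eta^{2})\nabla h(x^{k})^{\top}(x^{k}-x^{k-1}) -\eta^{2}\!\left(1-\tfrac{L\eta^{2}}{2}\right)\|\nabla h(x^{k})\|^{2} +\tfrac{L\theta^{2}}{2}\|x^{k}-x^{k-1}\|^{2}.
\]
Then I would introduce a Lyapunov sequence of the form
\[
 V_{k}:=h(x^{k})-h(\overline{x})+\tfrac{\mu}{2}\|x^{k}-x^{k-1}\|^{2},
\]
with $\mu>0$ to be tuned. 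Splitting the cross term via Young's inequality $\theta(1-L\eta^{2})\nabla h(x^{k})^{\top}(x^{k}-x^{k-1})\leq \tfrac{\varepsilon}{2}\|\nabla h(x^{k})\|^{2}+\tfrac{\theta^{2}(1-L\eta^{2})^{2}}{2\varepsilon}\|x^{k}-x^{k-1}\|^{2}$, the goal is to pick $\mu$ and $\varepsilon$ so that the condition $L\eta^{2}<1-\theta^{2}$ yields
\[
 V_{k+1}-V_{k}\leq -c_{1}\|\nabla h(x^{k})\|^{2}-c_{2}\|x^{k}-x^{k-1}\|^{2},\qquad c_{1},c_{2}>0.
\]
Since $V_{k}\geq 0$, summation gives $\sum_{k}\|\nabla h(x^{k})\|^{2}<+\infty$ and $\sum_{k}\|x^{k}-x^{k-1}\|^{2}<+\infty$, so in particular $\|\nabla h(x^{k})\|\to 0$ and $\|x^{k}-x^{k-1}\|\to 0$.

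At this stage the nonconvex nature of $h$ is handled using the generalized-convex toolkit already collected in the survey: Proposition \ref{pr8} yields $h(x^{k})-h(\overline{x})\leq \tfrac{\gamma^{2}}{2L}\|\nabla h(x^{k})\|^{2}\to 0$, and then Proposition \ref{pr9} gives $\|x^{k}-\overline{x}\|^{2}\leq \tfrac{8}{\gamma}(h(x^{k})-h(\overline{x}))\to 0$, so $x^{k}\to \overline{x}$ and $h(x^{k})\to h(\overline{x})$. Uniqueness of $\overline{x}$ is ensured by Theorem \ref{th2}, so no ambiguity about the limit arises.

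The hard part is the algebraic calibration in the middle step: one must verify that the coefficients $\mu$ and $\varepsilon$ can indeed be chosen so that both $c_{1}=\eta^{2}(1-L\eta^{2}/2)-\varepsilon/2$ and $c_{2}=\mu/2-L\theta^{2}/2-\theta^{2}(1-L\eta^{2})^{2}/(2\varepsilon)-(\mu/2)\cdot(\text{extra from }\|x^{k+1}-x^{k}\|^{2})$ are strictly positive, and that this feasibility relies exactly on the inequality $L\eta^{2}<1-\theta^{2}$. Once this window of admissible parameters is verified, every other step above is essentially bookkeeping, and the bridge from stationarity of the gradient to convergence of the iterates is fully handled by the strong-quasiconvex analogues (Propositions \ref{pr8} and \ref{pr9}) of the classical convex tools, so no convexity of $h$ is ever invoked.
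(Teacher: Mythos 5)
Your argument is essentially correct, and note that the survey states Theorem \ref{th15} without proof, quoting \cite[Theorem 27]{LMV}, so there is no in-paper proof to match step by step; the cited source in fact proves more (linear rates for both iterates and values, as recalled in the remark after the theorem). The calibration you flagged as the hard part does go through, and more cleanly than via Young's inequality: once the Lyapunov term is included, the coefficient of the cross term $\nabla h(x^k)^{\top}(x^k-x^{k-1})$ in $V_{k+1}-V_k$ is $\theta\bigl(1-(L+\mu)\eta^{2}\bigr)$, so the choice $\mu=(1-L\eta^{2})/\eta^{2}>0$ (positive since $L\eta^{2}<1-\theta^{2}<1$) annihilates it and yields, with no auxiliary $\varepsilon$,
\[
V_{k+1}-V_k\;\le\;-\frac{\eta^{2}}{2}\,\|\nabla h(x^k)\|^{2}\;-\;\frac{1-\theta^{2}-L\eta^{2}}{2\eta^{2}}\,\|x^k-x^{k-1}\|^{2},
\]
where the second coefficient is positive precisely because $\eta^{2}<(1-\theta^{2})/L$; this confirms that the parameter window is used exactly where you expected. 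From there your chain is sound: $V_k\ge 0$ gives summability, hence $\nabla h(x^k)\to 0_n$, Proposition \ref{pr8} (only the positivity of the PL constant matters, not its exact value) gives $h(x^k)\to h(\overline{x})$, and Proposition \ref{pr9} gives $x^k\to\overline{x}$, with no convexity invoked and no boundedness assumption on the iterates. The main difference from the cited result is that your route, passing through the function-value Lyapunov function and the PL/quadratic-growth bridge, as written only delivers convergence, whereas \cite{LMV} obtains linear rates; however, feeding the PL inequality back into the display above shows that $V_k$ decays geometrically, so your approach recovers the rate as well with one additional line, and it has the advantage of being modular, relying only on statements already collected in the survey (Propositions \ref{pr8} and \ref{pr9}) rather than on a direct use of the first-order characterization in Proposition \ref{pr19}.
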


\begin{remark}
Results on convergence rates of the trajectories of the considered dynamical systems can be found in \cite[Theorem 18]{LMV}, where it is shown that the trajectories of \eqref{DS1} in case $\psi\equiv 0$ converge exponentially to $\overline{x}$, and the same convergence order can be achieved for the values of $h$ on these trajectories toward its minimal value. Similar results are achieved for the second-order dynamical systems in \cite[Theorem 3.4]{RPK} and \cite[Theorem 25 $\&$ Remark 26]{LMV}. Moreover,
\cite[Remark 3.5]{RPK} exhibits the convergence to 0 of the gradient flow at an exponential rate.
On the other hand, in the discrete case, \cite[Theorem 22]{LMV} exhibits a linear convergence rate of the steepest descent algorithm for minimizing $h$ provided there, while \cite[Corollary 24]{LMV} provides (similar) convergence rates for the objective function values toward the minimal value of $h$. For the inertial methods, \cite[Theorem 4.3]{RPK} shows a convergence rate for the generated sequence, while \cite[Theorem 27 $\&$ Corollary 28]{LMV} provide (linear) convergence rates for both the generated sequence and the function values.
\end{remark}
\begin{remark}
Other iterative methods (of both proximal and extragradient type) for solving \eqref{COP} can be derived from the algorithms considered in Section \ref{sec:5} for solving
equilibrium problems governed by bifunctions taken strongly quasiconvex in their second variables. We leave this task to the interested reader.
\end{remark}

\section{Equilibrium problems involving strongly quasiconvex functions}\label{sec:5}


Let $K\subseteq {\R}^{n}$ be a nonempty, closed and convex set,
and a bifunction $f: K \times K \rightarrow {\R}$. Consider the \textit{equilibrium problem}
\begin{equation}\label{EP}
 {\rm find ~} \overline{x} \in K: ~~ f(\overline{x}, y) \geq 0, ~ \forall ~ y \in K.\tag{EP}
\end{equation}

Denote by $S(K, f)$ the solution set of \eqref{EP}. This class of problems provides an umbrella framework for various mathematical models from continuous optimization and variational analysis, including mi\-ni\-mi\-za\-tion problems, (inverse) variational inequalities, minimax problems, fixed point pro\-blems and complementarity pro\-blems among others (see \cite{BLO, KAR} for more on this).

The standard practice in the literature is to assume the convexity in the second variable of the bifunction employed in \eqref{EP}, and there are only few works there this hypothesis is relaxed, mostly because \eqref{EP} might fail to have solutions when this assumption is dropped, even when $K$ is convex and compact. Examples of equilibrium problems where the bifunction $f$ is taken to be only strongly quasiconvex in its second variable can be found in \cite[Example 4.2]{ILP}, \cite[Example 3]{MUY} and \cite[Examples 24--28]{GLT}, one of them being presented below, while examples of inverse mixed variational inequality that can be recast as an equilibrium problem governed by a bifunction that is strongly quasiconvex in its second variable are \cite[Example 4.1]{ILM} and \cite[Example 4.1]{LMY}.

\begin{example}\label{ex6} (see \cite[Example 27]{GLT}).
Take $p, q \in \mathbb{N}$, $p, q > 1$, and the
 bi\-func\-tion $f: {\R}^n \times {\R}^n \rightarrow {\R}$,
$$
f(x, y) \!=\! p \left( \max\{\sqrt{\| y\|}, (y - qe)^{2} \!-\! q\} \!-\!  \max\{\sqrt{\| x \|}, (x-qe)^{2} \!-\! q\} \right) + x^\top (y-x).$$
Then $f(x, \cdot)$ is strongly quasiconvex for all $x\in K$.
\end{example}

The following assumptions on the mathematical objects involved in the equi\-li\-brium problem are usually necessary for investigating and guaranteeing the convergence of iterative methods proposed for solving \eqref{EP}
\begin{itemize}
 \item[$(A1)$] $y\mapsto f(\cdot, y)$ is upper se\-mi\-con\-ti\-nuous for all $y \in K$;

 \item[$(A2)$] $f$ is pseudomonotone on $K$;

 \item[$(A3)$] $f$ is lower se\-mi\-con\-ti\-nuous (jointly in both arguments);

 \item[$(A4)$] $f(x, \cdot)$ is strongly quasiconvex on $K$ with modulus $\gamma > 0$ whenever $x \in K$;

 \item[$(A5)$] $f$ satisfies the following Lipschitz-continuity type condition:
 there exists $\eta> 0$ for which
 \begin{equation}\label{Lips:cond}
  f(x, z) - f(x, y) - f(y, z) \leq\eta(\left\lVert x - y \rVert^{2} + \lVert y - z \rVert^{2}\right), ~ \forall ~ x, y, z \in K.
 \end{equation}
\end{itemize}

\begin{remark}\label{re10}
Conditions $(A1)$, $(A2)$, $(A3)$ and $(A5)$ are standard assumptions in the literature on equilibrium problems, while $(A4)$ relaxes the standard convexity hypothesis
usually imposed on $f$ in its second argument. Recall also the following usual hypothesis when dealing with equilibrium problems that is a direct consequence of $(A2)$ and $(A5)$ (see, for instance, \cite[Remark 3.1$(i)$]{ILP}), and is mentioned here only for completeness (as it was employed for guaranteeing some technical results as a weaker assumption than $(A5)$, see \cite{GLT,ILP})
\begin{itemize}
 \item[$(A0)$] $f(x,x) = 0$ for all $x\in K$.
\end{itemize}
Moreover, a consequence of $(A3)$ that is sometimes employed instead of it is
\begin{itemize}
 \item[$(A1^{\prime})$] $f(x, \cdot)$ is lower se\-mi\-con\-ti\-nuous for all $x \in K$,
\end{itemize}
while for certain statements $(A3)$ is replaced with its stronger version
 \begin{itemize}
 \item[$(A3^{\prime})$] $f$ is continuous (jointly in both arguments) on an open set containing $K\times K$.
\end{itemize}
\end{remark}

An existence result for solutions to  \eqref{EP} can be found in \cite[Theorem 3.1]{IL3}, based on the $q$-asymptotic function (see \cite{FFB-Vera,IL2,lara-lopez}). Adding to it the strong quasiconvexity in the second variable of the involved bifunction one can guarantee the uniqueness of the solution to \eqref{EP}.

\begin{proposition} (cf. \cite[Proposition 3.1]{ILP}) \label{pr24}
When $f$ satisfies hypotheses $(Ai)$ with $i=0,1,1^{\prime},2,4$, then $S(K, f)$ is a singleton.
\end{proposition}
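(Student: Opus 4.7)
The plan is to split the claim into an existence part, which I would defer to the cited result \cite[Theorem 3.1]{IL3} applied under $(A1')$, $(A1)$, $(A2)$ (and the consequence $(A0)$), and a uniqueness part, where the strong quasiconvexity in the second argument is the crucial ingredient. Note that the standard existence theorem for equilibrium problems requires only quasiconvexity of $f(x,\cdot)$, which is implied by $(A4)$, together with the continuity and pseudomonotonicity hypotheses already assumed; so once the existence result is invoked the interesting work is to show $|S(K,f)|\le 1$.

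For uniqueness I would argue by contradiction. Suppose $\overline{x}_1,\overline{x}_2\in S(K,f)$ with $\overline{x}_1\neq \overline{x}_2$. From the definition of \eqref{EP} one has $f(\overline{x}_1,\overline{x}_2)\ge 0$ and $f(\overline{x}_2,\overline{x}_1)\ge 0$. Pseudomonotonicity $(A2)$ applied to $f(\overline{x}_1,\overline{x}_2)\ge 0$ yields $f(\overline{x}_2,\overline{x}_1)\le 0$, hence $f(\overline{x}_2,\overline{x}_1)=0$; symmetrically $f(\overline{x}_1,\overline{x}_2)=0$. Combined with $(A0)$, this gives $f(\overline{x}_1,\overline{x}_1)=f(\overline{x}_1,\overline{x}_2)=0$.

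Now I would invoke $(A4)$ on the function $f(\overline{x}_1,\cdot)$ at the midpoint $\overline{z}:=\tfrac{1}{2}\overline{x}_1+\tfrac{1}{2}\overline{x}_2\in K$ with $t=\tfrac{1}{2}$:
\begin{equation*}
f(\overline{x}_1,\overline{z})\;\le\;\max\{f(\overline{x}_1,\overline{x}_1),f(\overline{x}_1,\overline{x}_2)\}-\tfrac{1}{2}\cdot\tfrac{1}{2}\cdot\tfrac{\gamma}{2}\,\|\overline{x}_1-\overline{x}_2\|^{2}\;=\;-\tfrac{\gamma}{8}\,\|\overline{x}_1-\overline{x}_2\|^{2}\;<\;0,
\end{equation*}
contradicting $f(\overline{x}_1,y)\ge 0$ for all $y\in K$. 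Hence $\overline{x}_1=\overline{x}_2$.

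The uniqueness argument is short and essentially a direct application of the strong quasiconvexity inequality \eqref{sqc} once one has forced $f(\overline{x}_1,\overline{x}_2)$ and $f(\overline{x}_1,\overline{x}_1)$ to vanish; the only moderately delicate step is the bookkeeping that turns the two one-sided inequalities defining two solutions into the equalities $f(\overline{x}_i,\overline{x}_j)=0$ via pseudomonotonicity. I do not anticipate any genuine obstacle; the main point to double-check is that the cited existence result \cite[Theorem 3.1]{IL3} indeed applies under the combination $(A0),(A1),(A1'),(A2),(A4)$ without needing $(A5)$, which is consistent with how the hypothesis list is grouped in the statement.
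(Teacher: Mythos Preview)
Your proposal is correct and follows exactly the decomposition the paper itself indicates in the text preceding the statement: existence is imported from \cite[Theorem 3.1]{IL3}, and uniqueness is then derived from the strong quasiconvexity in the second variable. Your uniqueness argument---forcing $f(\overline{x}_1,\overline{x}_2)=0=f(\overline{x}_1,\overline{x}_1)$ via pseudomonotonicity and $(A0)$, then applying \eqref{sqc} at the midpoint to contradict $\overline{x}_1\in S(K,f)$---is the standard one and is precisely what the cited source \cite[Proposition 3.1]{ILP} does; the paper itself gives no separate proof beyond the citation.
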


The first proximal point algorithm for problem \eqref{EP} was presented in \cite{IL5} (in its classical form), here we present the
relaxed-inertial proximal point algorithm considered in \cite{GLT}, which was proposed in the convex framework in \cite{HDT, VTV}, and is inspired by Algorithm \ref{rippa:sqcx}.

\begin{algorithm}[H] (see \cite[Algorithm 1]{GLT})
 \caption{RIPPA-EP for Strongly Quasiconvex EP's}\label{rippa:ep}
\begin{description}
 \item[Step 0.](Initialization). Let  $x^0, x^{-1} \in K$, $\alpha, \rho \in
 [0, 1[$ and  $\{\beta_{j}\}_{j} \subseteq ]0, + \infty[$, $k=0$.

 \item[Step 1.] Choose $\alpha_k\in [0,\alpha]$, set
 $$ y^{k} = x^{k} + \alpha_{k} (x^{k} - x^{k-1}), \qquad [\text{extrapolation step}]$$
 and compute
 $$ z^{k} \in \argmin_{x \in K} \left\{f(y^{k}, x) + \frac{1}{2 \beta_{k}} \lVert
  y^{k} - x \rVert^{2}\right\}. \qquad [\text{proximal step}]
 $$

 \item[Step 2.] If $y^{k} = z^{k}$: \textsc{stop}.

 \item[Step 3.] Choose some relaxation parameter {$\rho_k \in [1-\rho,1+\rho]$}, and update
 $$
    x^{k+1}=(1-\rho_k)y^k+\rho_k z^{k}. \qquad [\text{relaxation step}]
 $$

 \item[Step 4.] Let $k\leftarrow k+1$ and go to Step 1.
 \end{description}
\end{algorithm}

\begin{remark}
When the stopping criterion in Algorithm \ref{rippa:ep} $y^{k} = z^{k}$ is activated, the solution to \eqref{EP} has been identified, i.e. $S(K, f)=\{y^k\}$.
\end{remark}

For achieving the convergence of Algorithm \ref{rippa:ep}, the following additional hypotheses on the involved parameter sequences were proposed in \cite{GLT}
$$\exists \, \epsilon > 0:
\left\{
 \begin{array}{ll}
(C1) & \frac{1}{\gamma - 8 \eta} < \beta_{k} < \epsilon \leq
  \frac{1}{4 \eta}\ \forall k \geq 0;\\

(C2) & 0 < 1 - \rho \leq \rho_{k} \leq 1 + \rho:\ 0 \leq \rho
  \leq 1 - 4 \eta \epsilon.
\end{array}
\right.$$
In \cite[Remark 8]{GLT} their meaning is related to the existing literature. The convergence result of Algorithm \ref{rippa:ep} follows (for preliminary technical results see \cite[Subsection 3.1]{GLT}.

\begin{remark}
 Assumption $(C1)$ suggests that one needs to have
 \begin{equation}\label{gamma:eta}
  \gamma > 12 \eta.
 \end{equation}
This assumption was used in \cite{GLT,ILP,ILM,LMY} but it follows from the analysis of \cite[Proposition 3.4]{ILP}, in which can be noted that
 $$\left( \frac{1 + \gamma \beta_{k}}{8 \beta_{k}} - \eta \right) \geq 0  \Longleftrightarrow ~ 1 \geq \beta_{k} (8 \eta - \gamma) ~ \forall ~ k \in \mathbb{N}.$$

 Condition \eqref{gamma:eta} is, thus, correct, but it is not complete since it only deals with the case when $\gamma \geq 8\eta$. It can be improved to the case
 $$ \gamma \in \, ]0, 8 \eta[ \, \cup \, ]12 \eta, + \infty[,$$ 
when the parameters $\{\beta_{k}\}_k$ could be taken as follows
 \begin{align}
  & {\rm if ~}~ 0 < \gamma < 8 \eta ~ \Longrightarrow ~
  \beta_{k} \in \left] 0, \min\left\{\frac{1}{8 \eta - \gamma}, \frac{1}{4 \eta}\right\} \right[ \label{new:c1} \\
  & {\rm if ~} ~~~~~ \gamma > 12 \eta ~ \Longrightarrow ~ \beta_{k} \in \left] \frac{1}{\gamma - 8 \eta}, \frac{1}{4 \eta} \right[. \label{new:c2}
 \end{align}
\end{remark}
Hence, we can replace assumptions $(C1)$ and \eqref{gamma:eta} by relations \eqref{new:c1} and \eqref{new:c2} in the next statements.

\begin{theorem}\label{conver:solution} (see \cite[Theorem 11]{GLT})
 Let $K$ be an affine subspace in ${\R}^{n}$, $f$ be such that assumptions
 $(Ai)$ with $i = 1, 2,3,4,5$ hold, $\{\beta_{k}\}_{k}$ and $\{\rho_{k}\}_{k}$
 be sequences of po\-si\-ti\-ve num\-bers such that assumptions $(Ci)$ with
 $i=1,2$ hold, $\{x^{k}\}_{k}$, $\{y^{k}\}_{k}$ and $\{z^{k}\}_{k}$ be the
 sequences generated by Algorithm \ref{rippa:ep}. If
 \begin{align}\label{eq:th:main.09}
  \sum_{k=0}^\infty \, \alpha_{k} \norm{x^{k} - x^{k-1}}^{2} < + \infty,
  \end{align}
 then the following assertions hold
 \begin{itemize}
  \item [$(a)$] for $\{\overline{x}\} = S(K, f)$, the limit $\lim_{k \to
  \infty}\,\norm{x^k-\overline{x}}$ exists and
  \begin{align}\label{eq:16}
   \lim_{k \to + \infty} \lVert x^{k+1} - y^{k} \lVert^{2} = \lim_{k \to +
   \infty} \lVert z^{k} - y^{k} \lVert^{2} = 0;
  \end{align}

  \item [$(b)$] the sequences $\{x^{k}\}_{k}$, $\{y^{k}\}_{k}$ and $\{z^{k}\}_{k}$ converge all to $\overline x$.
 \end{itemize}
\end{theorem}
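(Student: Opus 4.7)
The plan is to follow the blueprint of Theorem \ref{th1}, adapted to the equilibrium-problem setting. Since assumptions $(A1)$--$(A4)$ are in force, Proposition \ref{pr24} already guarantees that $S(K,f)=\{\overline{x}\}$ is a singleton, so it suffices to show that the iterates accumulate at $\overline{x}$ and that $\{\|x^k-\overline{x}\|\}_k$ is convergent.

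The heart of the proof is a Fej\'er-type inequality centered at $\overline{x}$. Since $z^k$ minimizes $f(y^k,\cdot)+\frac{1}{2\beta_k}\|y^k-\cdot\|^2$ on $K$ and $f(y^k,\cdot)$ is strongly quasiconvex on $K$ by $(A4)$, a proximal inequality analogous to Proposition \ref{pr6} applied at the test point $\overline{x}$ with $t=1$ should yield
\begin{equation*}
f(y^k,\overline{x})-f(y^k,z^k)\geq\tfrac{1}{\beta_k}\langle y^k-z^k,\overline{x}-z^k\rangle+\tfrac{\gamma}{2}\|\overline{x}-z^k\|^2.
\end{equation*}
I would then combine this with the Lipschitz-type condition $(A5)$ at $(\overline{x},y^k,z^k)$, the equilibrium condition $f(\overline{x},\cdot)\geq 0$ on $K$, and pseudomonotonicity $(A2)$ (which forces $f(y^k,\overline{x})\leq 0$) to absorb the cross terms. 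Under the parameter conditions $(C1)$ in their improved forms \eqref{new:c1}--\eqref{new:c2}, this should produce
\begin{equation*}
\|z^k-\overline{x}\|^2\leq\|y^k-\overline{x}\|^2-\kappa\|z^k-y^k\|^2
\end{equation*}
for some constant $\kappa>0$. Since $K$ is an affine subspace, both $y^k$ and $x^{k+1}=(1-\rho_k)y^k+\rho_k z^k$ remain in $K$; expanding $\|x^{k+1}-\overline{x}\|^2$ by the affine-combination identity, using $(C2)$ to handle the relaxation coefficients, and applying the standard inertial expansion of $\|y^k-\overline{x}\|^2$ in terms of $\|x^k-\overline{x}\|^2$ and $\alpha_k\|x^k-x^{k-1}\|^2$, one arrives at a quasi-Fej\'er estimate
\begin{equation*}
\|x^{k+1}-\overline{x}\|^2\leq\|x^k-\overline{x}\|^2+\delta_k-\kappa'\bigl(\|z^k-y^k\|^2+\|x^{k+1}-y^k\|^2\bigr),
\end{equation*}
with $\sum_k\delta_k<+\infty$ thanks to hypothesis \eqref{eq:th:main.09}.

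Part $(a)$ follows at once from the standard quasi-Fej\'er lemma: $\lim_{k\to+\infty}\|x^k-\overline{x}\|$ exists, and summing the telescoping inequality yields $\sum_k(\|z^k-y^k\|^2+\|x^{k+1}-y^k\|^2)<+\infty$, hence the limits in \eqref{eq:16}. For part $(b)$, boundedness of $\{x^k\}_k$ together with \eqref{eq:th:main.09} (which forces $\alpha_k\|x^k-x^{k-1}\|\to 0$) and $\|x^{k+1}-y^k\|\to 0$ imply $y^k-x^k\to 0$ and $z^k-x^k\to 0$. Extracting any cluster point $x^*$ of $\{x^k\}_k$ along a subsequence $\{x^{k_j}\}_j$, the same point is shared by $\{y^{k_j}\}_j$ and $\{z^{k_j}\}_j$. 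Passing to the limit in the defining proximal inequality $f(y^{k_j},z^{k_j})+\frac{1}{2\beta_{k_j}}\|z^{k_j}-y^{k_j}\|^2\leq f(y^{k_j},y)+\frac{1}{2\beta_{k_j}}\|y-y^{k_j}\|^2$ via $(A1)$, $(A3)$ and the lower bound on $\beta_k$ from $(C1)$, then exploiting strong quasiconvexity of $f(x^*,\cdot)$ along the segment from $x^*$ to a test point $y\in K$ and letting the interpolation parameter tend to $0$, should yield $f(x^*,y)\geq 0$ for every $y\in K$, so $x^*\in S(K,f)=\{\overline{x}\}$. Since $\lim_k\|x^k-\overline{x}\|$ exists and vanishes along this subsequence, the entire sequence converges to $\overline{x}$, and so do $\{y^k\}_k$ and $\{z^k\}_k$ by the vanishing of the gaps.

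The principal obstacle is the derivation of the Fej\'er estimate itself: because $f(y^k,\cdot)$ is only strongly quasiconvex and not convex, one cannot invoke first-order optimality of the proximal subproblem or firm non-expansiveness of the proximal operator. The quadratic growth specific to strong quasiconvexity (in the spirit of Proposition \ref{pr9}) together with the Lipschitz-type condition $(A5)$ must carry the entire absorption of cross terms, which is precisely what dictates the delicate parameter ranges \eqref{new:c1}--\eqref{new:c2} for $\beta_k$ and the relaxation window $(C2)$.
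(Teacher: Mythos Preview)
Your overall architecture---Fej\'er inequality via the proximal estimate, quasi-Fej\'er lemma for part $(a)$, cluster-point argument using $(A1)$, $(A3)$ for part $(b)$---is exactly the route taken in \cite[Theorem 11]{GLT} (the survey does not reproduce the proof but refers to the technical lemmas in \cite[Subsection 3.1]{GLT} and to \cite[Proposition 3.4]{ILP}). The inertial expansion, the use of pseudomonotonicity to get $f(y^k,\overline{x})\leq 0$, and the passage to the limit in the proximal subproblem are all standard and correctly identified.

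The genuine gap is in your key displayed inequality. Applying Proposition \ref{pr6} with $t=1$ gives a coefficient $\tfrac{1}{2}(\gamma-\tfrac{1}{\beta_k}-\gamma)=-\tfrac{1}{2\beta_k}$ in front of $\|\overline{x}-z^k\|^2$, not $+\tfrac{\gamma}{2}$; so the inequality you wrote does not follow. Moreover, Proposition \ref{pr6} only bounds $\max\{f(y^k,\overline{x}),f(y^k,z^k)\}$ from below, and you have not argued which branch of the maximum is active (knowing $f(y^k,\overline{x})\leq 0$ is not enough, since $f(y^k,z^k)$ could be still smaller). The actual derivation in \cite{ILP,GLT} takes $t=\tfrac12$ in Proposition \ref{pr6}, handles the two cases of the max separately, and combines with $(A5)$; this is precisely what produces the coefficient $\tfrac{1+\gamma\beta_k}{8\beta_k}-\eta$ that the survey highlights in the remark following the theorem and that dictates the range $(C1)$ (and its refinements \eqref{new:c1}--\eqref{new:c2}). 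Once that corrected estimate is in place, the rest of your argument goes through as you describe.
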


\begin{remark}
Similarly to Theorem \ref{th1} (see Proposition \ref{pr22}), one can show (see \cite[Theorem 18]{GLT}) that the hypothesis \eqref{eq:th:main.09} of Theorem \ref{conver:solution} is valid under the  condition (see also \cite[Remark 12]{GLT} for another one)
\begin{itemize}
 \item[$(C3)$] the sequence $\{\alpha_k\}_{k}$ is nondecreasing and there
 exists $\beta = {\xi}/({2 + \xi})$ such that \eqref{eq:alpha} holds
where
\begin{align}\label{lower.b}
\xi:=\frac{1-4\eta \epsilon-\rho}{1+\rho}>0.
\end{align}
\end{itemize}
\end{remark}

\begin{remark}\label{re7}
When $\alpha = 0$ it immediately follows $\alpha_{k} = 0$ for all $k \in \N$, and Algorithm \ref{rippa:ep} collapses into a relaxed proximal point algorithm for solving\eqref{EP} (see \cite[Corollary 12]{GLT}) where $K$ can be taken only closed and convex (see also Remark \ref{re4}) when $\rho_{k} \in \, ]0, 1]$ for every $k \in \N$ (or an affine subspace when $\rho_{k} \in \, ]0, 2[$, $k \in \N$).
\end{remark}

\begin{remark}\label{re8}
When besides $\alpha = 0$ one also fixes $\rho_{k} = 1$ for all $k \in \N$, Algorithm \ref{rippa:ep} turns into the proximal point method for solving \eqref{EP} (see \cite[Algorithm 1]{ILP} and \cite[Corollary 14]{GLT}).
\end{remark}

\begin{remark}\label{re9}
In \cite[Subsection 4.2]{GLT} it is shown that a suitable choice of the relaxation and inertial parameters ensures that Algorithm \ref{rippa:ep} solves \eqref{EP} faster and cheaper than its standard proximal point counterpart \cite[Algorithm 1]{ILP}.
\end{remark}

Next we present other developments of the proximal point method (besides the relaxed-inertial Algorithm \ref{rippa:ep}) proposed for solving \eqref{EP} when the employed bifunction is strongly quasiconvex in the second variable.

We begin with the one proposed in \cite{ILP} (following the method introduced in \cite{IUS} in the convex case) where one regularizes the whole bifunction $f$ instead of applying this procedure only on its second variable. Even though the convergence statement of this algorithm holds under less demanding hypotheses than the one of Algorithm \ref{rippa:ep}, this method is not really practical because it involves solving a regularized equilibrium problem in each iteration.

\begin{algorithm}[H] (see \cite[Algorithm 2]{ILP})
 \caption{REG-EP for Strongly Quasiconvex EP's}\label{reg:ep}
\begin{description}
 \item[Step 0.](Initialization). Let  $x^0 \in K$, $\theta>0$, $\{\beta_{j}\}_{j \in\mathbb{N}} \subseteq [\theta, + \infty[$, $k=0$.

 \item[Step 1.] Set
 $$
  f_k(x, y) = f(x, y) + \frac{1}{\beta_k} (x-x^{k})^\top (y- x), \qquad [\text{regularization step}]
 $$
 and compute
 $$
  x^{k} \in S(K, f_k).
 $$

 \item[Step 2.] If $x^{k+1} = x^{k}$: \textsc{stop}.

 \item[Step 3.] Let $k\leftarrow k+1$ and go to Step 1.
 \end{description}
\end{algorithm}

\begin{remark}
When the stopping criterion in Algorithm \ref{reg:ep} $x^{k+1} = x^{k}$ is activated, the solution to \eqref{EP} has been identified, i.e. $S(K, f)=\{x^k\}$.
\end{remark}

The convergence statement follows immediately, partially as a consequence of the technical results from \cite[Proposition 3.6]{ILP}.

\begin{theorem} (see \cite[Corollary 3.2]{ILP})\label{th5}
Let $f$ be such that assumptions $(Ai)$ with $i = 1,1^{\prime}, 2,4,5$ hold, and $\{x^{k}\}_{k}$ be the
sequence generated by Algorithm \ref{reg:ep}. Then $\{x^{k}\}_{k}$ converges to $\overline x$, where $S(K, f)=\{\overline{x}\}$.
\end{theorem}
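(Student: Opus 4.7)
My approach is to reduce the convergence to three ingredients: (i) well-definedness of the iterates $\{x^k\}_k$, (ii) a Fej\'er-type monotonicity of the sequence with respect to the unique solution $\overline{x}$ of \eqref{EP}, and (iii) a cluster-point argument exploiting the upper semicontinuity and pseudomonotonicity of $f$. The singleton property $S(K,f)=\{\overline{x}\}$ is already granted by Proposition \ref{pr24} (note that hypothesis $(A0)$ is automatic here since, as stated in Remark \ref{re10}, it follows from $(A2)$ and $(A5)$). The well-definedness of each iterate, namely that $S(K,f_k)$ is a nonempty singleton so that Step 1 of Algorithm \ref{reg:ep} can indeed be performed, is expected to follow from \cite[Proposition 3.6]{ILP}, since $f_k$ inherits from $f$ (together with the continuous, linear regularizer) upper semicontinuity in the first variable, lower semicontinuity in the second, and strong quasiconvexity in the second, which are the structural conditions underlying the existence/uniqueness machinery for equilibrium problems with strongly quasiconvex second argument.

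For ingredient (ii), I would test the defining inequality
\[
f(x^{k+1},y)+\frac{1}{\beta_k}(x^{k+1}-x^k)^{\top}(y-x^{k+1})\geq 0,\quad \forall\,y\in K,
\]
of $x^{k+1}\in S(K,f_k)$ with $y=\overline{x}$. Because $\overline{x}\in S(K,f)$ yields $f(\overline{x},x^{k+1})\geq 0$, assumption $(A2)$ gives $f(x^{k+1},\overline{x})\leq 0$, and therefore $(x^{k+1}-x^k)^{\top}(\overline{x}-x^{k+1})\geq 0$. Expanding
\[
\|x^{k+1}-\overline{x}\|^2=\|x^{k+1}-x^k\|^2+2(x^{k+1}-x^k)^{\top}(x^k-\overline{x})+\|x^k-\overline{x}\|^2
\]
and substituting the inner-product estimate leads at once to
\[
\|x^{k+1}-\overline{x}\|^2\leq \|x^k-\overline{x}\|^2-\|x^{k+1}-x^k\|^2.
\]
Consequently $\{\|x^k-\overline{x}\|\}_k$ is nonincreasing (in particular $\{x^k\}_k$ is bounded), and $\sum_k\|x^{k+1}-x^k\|^2<+\infty$, so $\|x^{k+1}-x^k\|\to 0$.

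For ingredient (iii), let $\overline{y}\in K$ be any cluster point along a subsequence $x^{k_j}\to\overline{y}$; the closedness of $K$ and $\|x^{k_j+1}-x^{k_j}\|\to 0$ yield $x^{k_j+1}\to\overline{y}$ as well. Rewriting the defining inequality of $x^{k_j+1}$ as
\[
f(x^{k_j+1},y)\geq -\frac{1}{\beta_{k_j}}(x^{k_j+1}-x^{k_j})^{\top}(y-x^{k_j+1}),\quad \forall\,y\in K,
\]
and using $\beta_{k_j}\geq\theta>0$ together with the boundedness of $\{x^{k_j+1}\}_j$, the right-hand side vanishes as $j\to+\infty$. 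Passing to the limit via the upper semicontinuity of $f(\cdot,y)$ provided by $(A1)$ yields $f(\overline{y},y)\geq\limsup_j f(x^{k_j+1},y)\geq 0$ for every $y\in K$, so $\overline{y}\in S(K,f)=\{\overline{x}\}$. Uniqueness of the cluster point combined with the Fej\'er property from step (ii) then forces $x^k\to\overline{x}$.

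The main obstacle is precisely the well-definedness of the iterates: verifying that the perturbed bifunction $f_k$ still fits the existence framework for equilibrium problems whose second argument is strongly quasiconvex (and, in particular, that pseudomonotonicity type information survives the regularization). This is exactly what \cite[Proposition 3.6]{ILP} supplies, which is why the statement of Theorem \ref{th5} is phrased ``partially as a consequence'' of that technical result; once that ingredient is granted, the remaining argument is the clean Fej\'er/cluster-point analysis outlined above.
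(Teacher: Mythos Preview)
Your Fej\'er-monotonicity and cluster-point arguments in steps (ii) and (iii) are correct and constitute exactly the standard route used for this class of regularized equilibrium algorithms; the paper itself does not spell out a proof but defers to \cite[Corollary 3.2 and Proposition 3.6]{ILP}, whose content is precisely what you reproduce.

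There is, however, one inaccurate claim in your discussion of step (i): you assert that $f_k(x,\cdot)$ inherits strong quasiconvexity from $f(x,\cdot)$ because the regularizer $y\mapsto \frac{1}{\beta_k}(x-x^k)^\top(y-x)$ is affine. This inheritance is \emph{false} in general: the sum of a strongly quasiconvex function and an affine one need not even be quasiconvex (take for instance $t\mapsto\sqrt{|t|}$ on $[-1,1]$ and add $10t$; the resulting function has a local maximum in the interior and thus fails quasiconvexity). So the well-definedness of $x^{k+1}\in S(K,f_k)$ cannot be argued by simply feeding $f_k$ back into the existence machinery for bifunctions strongly quasiconvex in the second variable. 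The actual argument in \cite[Proposition 3.6]{ILP} proceeds differently (exploiting the structure of the regularization and coercivity/Ky Fan-type reasoning rather than a direct verification of $(A4)$ for $f_k$). Since you already flag this step as the ``main obstacle'' and correctly defer to that proposition, the overall plan stands; just drop the incorrect justification about inherited strong quasiconvexity.
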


\begin{remark}
The original convergence result for Algorithm \ref{reg:ep} given in \cite[Corollary 3.2]{ILP} also contains the hypothesis $(A3)$ (alongside its weaker version $(A1^{\prime})$) which does not seem to be necessary for guaranteeing the outcome.
\end{remark}

\begin{remark}\label{re11}
As noted in \cite[Corollary 3.2]{ILP}, one can derive another algorithm for solving \eqref{COP} (for $h: K \rightarrow \mathbb{R}$) from Algorithm \ref{reg:ep} by taking $f(x,y)=h(y)-h(x)$, $x, y\in K$. The hypotheses of Theorem \ref{th5} are fulfilled when $h$ is lower semicontinuous and strongly quasiconvex with modulus $\gamma > 0$, and the sequence generated by the algorithm converges toward the minimizer of $h$.
\end{remark}

The second algorithm for solving \eqref{EP} developed on the basis of the proximal point one we present here is the inertial extrapolation one that comes from \cite{IOS}. Different to the inertial proximal point method that can be derived from Algorithm \ref{rippa:ep} by canceling the relaxation steps, this method allows the inertial parameter (note that it involves a constant one, i.e. the counterpart of $\{\alpha_j\}_j$ from Algorithm \ref{rippa:ep} is a constant sequence) to take negative values, too.

\begin{algorithm}[H] (see \cite[Algorithm 1]{IOS})
 \caption{IEPPA-EP for Strongly Quasiconvex EP's}\label{ieppa:ep}
\begin{description}
 \item[Step 0.](Initialization). Let  $x^0, y^{0} \in \R^n$, $\alpha \in
]-1, 1[$, $\beta >0$, $k=0$.

 \item[Step 1.] Determine
$$
  x^{k+1} \in \argmin_{x \in K} \left\{f(y^{k}, x) + \frac{1}{2 \beta} \lVert
  y^{k} - x \rVert^{2}\right\}. \qquad [\text{proximal step}]
$$

 \item[Step 2.] If $x^{k+1} = y^{k}$: \textsc{stop}.

 \item[Step 3.] Update
$$
  y^{k+1} = x^{k+1} + \alpha (x^{k+1} - x^{k}). \qquad [\text{extrapolation step}]
$$

\item[Step 4.] Let $k\leftarrow k+1$ and go to Step 1.
 \end{description}
\end{algorithm}

\begin{remark} (see \cite[Proposition 3.6]{IOS})
When the stopping criterion in Algorithm \ref{ieppa:ep} $x^{k+1} = y^{k}$ is activated, the solution to \eqref{EP} has been identified, i.e. $S(K, f)=\{x^{k+1}\}$.
\end{remark}

\begin{remark} (see \cite[Remark 3.4]{IOS})
When $\alpha = 0$ Algorithm \ref{ieppa:ep} turns into the proximal point method for solving \eqref{EP} that is \cite[Algorithm 1]{ILP}.
\end{remark}

The convergence of Algorithm \ref{ieppa:ep} is discussed separately when the inertial parameter $\alpha$ is positive or negative. Technical statements such as \cite[Lemma 3.8, Lemma 3.9 \& Lemma 3.14]{IOS} provided important tools for the proofs.

\begin{theorem}\label{th6} (see \cite[Theorem 3.10]{IOS})
Let $f$ be such that assumptions $(Ai)$ with $i = 1, 2,3,4,5$ hold, $12\eta <\gamma$, $0\leq \alpha <1/3$, $1/(\gamma - 8 \eta)<\beta < (1-3\alpha)/(4\eta(1-\alpha))$,
and $\{x^{k}\}_{k}$ be the sequence generated by Algorithm \ref{ieppa:ep}. Then $\{x^{k}\}_{k}$ converges to $\overline x$, where $\{\overline{x}\} = S(K, f)$.
\end{theorem}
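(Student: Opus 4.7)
The plan is to derive a Lyapunov-type inequality that yields summability of the step increments $\|x^{k+1}-x^k\|^2$, Fej\'er-like boundedness of $\{x^k\}_k$, and ultimately convergence of every cluster subsequence to the unique solution $\overline x$ of \eqref{EP} guaranteed by Proposition \ref{pr24}.

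First, I would exploit the strongly quasiconvex structure of the proximal subproblem. By $(A4)$ the function $f(y^k,\cdot)$ is strongly quasiconvex with modulus $\gamma$, so Proposition \ref{pr6} applied with $h=f(y^k,\cdot)$, taking $t=1$ and $z=\overline x$, yields
\begin{equation*}
\max\{f(y^k,\overline x),\,f(y^k,x^{k+1})\}\geq f(y^k,x^{k+1})+\tfrac{1}{\beta}(y^k-x^{k+1})^\top(\overline x-x^{k+1})-\tfrac{1}{2\beta}\|\overline x-x^{k+1}\|^2.
\end{equation*}
Using pseudomonotonicity $(A2)$ (so that $f(y^k,\overline x)\le 0$ and $f(x^{k+1},\overline x)\le 0$), together with the Lipschitz-type condition $(A5)$ applied to the triple $(y^k,x^{k+1},\overline x)$ and the polar identity $2(y^k-x^{k+1})^\top(\overline x-x^{k+1})=\|y^k-x^{k+1}\|^2+\|\overline x-x^{k+1}\|^2-\|y^k-\overline x\|^2$, I would derive a one-step bound of the form
\begin{equation*}
\|x^{k+1}-\overline x\|^2\leq \|y^k-\overline x\|^2-\kappa\,\|y^k-x^{k+1}\|^2,
\end{equation*}
with $\kappa>0$ as soon as $1/(\gamma-8\eta)<\beta<1/(4\eta)$. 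This is essentially the content of Lemmas 3.8--3.9 of \cite{IOS}.

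The second step transfers the bound from $y^k$ to $x^k,x^{k-1}$. Using $y^k=x^k+\alpha(x^k-x^{k-1})$ and the standard identity
\begin{equation*}
\|y^k-\overline x\|^2=(1+\alpha)\|x^k-\overline x\|^2-\alpha\|x^{k-1}-\overline x\|^2+\alpha(1+\alpha)\|x^k-x^{k-1}\|^2,
\end{equation*}
together with an analogous expansion of $\|y^k-x^{k+1}\|^2$ and a Young bound on the cross term $\langle x^{k+1}-x^k,x^k-x^{k-1}\rangle$, I would put the combined inequality into the Lyapunov form
\begin{equation*}
\Phi_{k+1}\leq \Phi_k-(\delta_1-\delta_2)\|x^{k+1}-x^k\|^2,
\end{equation*}
where $\Phi_k:=\|x^k-\overline x\|^2-\alpha\|x^{k-1}-\overline x\|^2+\mu\|x^k-x^{k-1}\|^2$ with a free parameter $\mu>0$. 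Telescoping then delivers both boundedness of $\{\Phi_k\}$ (hence of $\{x^k\}_k$, since $\alpha<1$) and $\sum_k\|x^{k+1}-x^k\|^2<+\infty$, so $\|x^{k+1}-x^k\|\to 0$ and $\|x^{k+1}-y^k\|\to 0$.

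Finally, any cluster point $\widehat x$ of the bounded sequence $\{x^k\}_k$ satisfies, by passing to the limit in the optimality characterization of the proximal subproblem and using the joint continuity of $f$ from $(A3)$, the relation $f(\widehat x,y)\geq 0$ for all $y\in K$, i.e.\ $\widehat x\in S(K,f)$. Proposition \ref{pr24} then forces $\widehat x=\overline x$, and since every cluster point of the bounded sequence equals $\overline x$, the whole sequence converges to $\overline x$. The main obstacle I anticipate is the algebraic bookkeeping in the second step: the Young-inequality parameter, the coefficient $\mu$, and the factors $\kappa$, $\alpha(1+\alpha)$ interact in a tight quadratic inequality whose solvability in $\mu$ requires exactly $\alpha<1/3$ and $\beta<(1-3\alpha)/(4\eta(1-\alpha))$ --- the factor $1-3\alpha$ and the cutoff $\alpha<1/3$ emerge only after one optimizes over the Young parameter, and this is where the delicate piece of the proof lies.
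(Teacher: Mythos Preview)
Your overall architecture is exactly the one the survey points to (it does not reprove the result itself but refers to \cite[Lemmas 3.8, 3.9]{IOS} for the core Fej\'er-type estimate and then to the standard inertial Lyapunov machinery): (i) a one-step contraction $\|x^{k+1}-\overline x\|^2\le\|y^k-\overline x\|^2-\kappa\|y^k-x^{k+1}\|^2$ obtained from the proximal optimality, strong quasiconvexity, $(A5)$ and pseudomonotonicity; (ii) the inertial expansion of $\|y^k-\overline x\|^2$ together with a Lyapunov function $\Phi_k$, whose positivity and descent force $\alpha<1/3$ and $\beta<(1-3\alpha)/(4\eta(1-\alpha))$; (iii) identification of cluster points with $\overline x$ via $(A3)$ and Proposition~\ref{pr24}. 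So the plan is on target and matches the paper's indicated route.

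There is, however, one concrete slip in step (i). Applying Proposition~\ref{pr6} with $t=1$ kills the strong quasiconvexity modulus entirely: the coefficient $\tfrac{t}{2}\bigl(\gamma-\tfrac{t}{\beta}-t\gamma\bigr)$ collapses to $-\tfrac{1}{2\beta}$, and combining this with $(A5)$ and the polarization identity leaves you with an inequality of the form $\|y^k-\overline x\|^2\ge (1-2\beta\eta)\|y^k-x^{k+1}\|^2-2\beta\eta\|x^{k+1}-\overline x\|^2$, which does \emph{not} dominate $\|x^{k+1}-\overline x\|^2$. The actual argument (as in \cite{ILP,IOS}) keeps $t$ free (the natural choice being $t=1/2$, which is precisely where the factor $8$ in the threshold $1/(\gamma-8\eta)$ comes from) so that a genuinely positive multiple of $\|x^{k+1}-\overline x\|^2$ survives; only then does the bound $\beta>1/(\gamma-8\eta)$ enter and the desired $\kappa>0$ emerge. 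Once you correct this, the rest of your sketch (the Lyapunov step and the limiting argument) goes through as you describe.
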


\begin{theorem}\label{th7} (see \cite[Theorem 3.14]{IOS})
Let $f$ be such that assumptions $(Ai)$ with $i = 1, 2,3,4,5$ hold, $12\eta <\gamma$, $-1< \alpha <0$, $1/(\gamma - 8 \eta)<\beta < 1/(4\eta)$,
and $\{x^{k}\}_{k}$ be the sequence generated by Algorithm \ref{ieppa:ep}. Then $\{x^{k}\}_{k}$ converges to $\overline x$, where $\{\overline{x}\} = S(K, f)$.
\end{theorem}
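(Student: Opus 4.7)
The plan is to adapt the Fej\'er-type analysis behind the positive-inertial case (Theorem \ref{th6}), while exploiting the algebraic bonus of taking $\alpha\in\,]-1,0[$. First, invoke Proposition \ref{pr24} to obtain the unique solution $\overline x$ of \eqref{EP}. The key observation is that, for $\alpha\in\,]-1,0[$, the extrapolation can be rewritten as the convex combination
\begin{equation*}
y^{k+1} \;=\; x^{k+1} + \alpha(x^{k+1}-x^k) \;=\; (1+\alpha)\, x^{k+1} + (-\alpha)\, x^k,
\end{equation*}
since both weights lie in $]0,1[$ and sum to $1$. Hence the squared-norm identity yields
\begin{equation*}
\|y^{k+1}-\overline x\|^2 = (1+\alpha)\|x^{k+1}-\overline x\|^2 + (-\alpha)\|x^k-\overline x\|^2 + \alpha(1+\alpha)\|x^{k+1}-x^k\|^2,
\end{equation*}
and the last term carries the negative coefficient $\alpha(1+\alpha)<0$, which is a built-in source of summability that is absent from the positive-inertial setting.

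Next, I would extract a Fej\'er-type inequality from the proximal step. Since $x^{k+1}\in\px_{\beta f(y^k,\cdot)}(K,y^k)$ and $f(y^k,\cdot)$ is lower semicontinuous and strongly quasiconvex with modulus $\gamma$ on $K$ by $(A1')$ and $(A4)$, Proposition \ref{pr6} (applied with base point $y^k$, proximal image $x^{k+1}$, test point $y=\overline x$ and $t=1$) reduces, after the polarization identity, to
\begin{equation*}
\|y^k-\overline x\|^2 - \|y^k-x^{k+1}\|^2 \;\geq\; 2\beta\bigl(f(y^k,x^{k+1}) - \max\{f(y^k,\overline x),\, f(y^k,x^{k+1})\}\bigr).
\end{equation*}
Pseudomonotonicity $(A2)$ forces $f(y^k,\overline x)\leq 0$ (because $f(\overline x,y^k)\geq 0$), while applying the Lipschitz-type condition $(A5)$ to the triple $(\overline x, y^k, x^{k+1})$ together with $f(\overline x,x^{k+1})\geq 0$ supplies a lower bound on $f(y^k,x^{k+1})$ of the order $-\eta\bigl(\|y^k-\overline x\|^2 + \|y^k-x^{k+1}\|^2\bigr)$. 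Splitting the two cases $f(y^k,\overline x)\leq f(y^k,x^{k+1})$ and $f(y^k,\overline x)>f(y^k,x^{k+1})$ and combining these estimates yields an inequality of the form
\begin{equation*}
\|x^{k+1}-\overline x\|^2 \;\leq\; \|y^k-\overline x\|^2 - (1-4\eta\beta)\|y^k-x^{k+1}\|^2,
\end{equation*}
in which $\beta<1/(4\eta)$ makes the residual coefficient strictly positive, and the lower bound $\beta>1/(\gamma-8\eta)$ (which presupposes $\gamma>12\eta$) compensates the slack $-\tfrac{1}{2\beta}\|\overline x-x^{k+1}\|^2$ coming from Proposition \ref{pr6}.

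Finally, substituting the convex-combination identity for $\|y^k-\overline x\|^2$ (applied to $y^k$ viewed as the extrapolation of $x^k$ from $x^{k-1}$) into the Fej\'er estimate produces a discrete Lyapunov recursion
\begin{equation*}
\Phi_{k+1}\;\leq\; \Phi_k - (1-4\eta\beta)\|y^k-x^{k+1}\|^2 + \alpha(1+\alpha)\|x^k-x^{k-1}\|^2,
\end{equation*}
where $\Phi_k:=\|x^k-\overline x\|^2 + (-\alpha)\|x^k-x^{k-1}\|^2\geq 0$; both residuals subtract from $\Phi_k$ since $\alpha(1+\alpha)<0$ and $1-4\eta\beta>0$. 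Telescoping gives boundedness of $\{x^k\}_k$ together with the summability of $\|y^k-x^{k+1}\|^2$ and of $\|x^k-x^{k-1}\|^2$, whence $\|x^{k+1}-x^k\|\to 0$ and $\|y^k-x^{k+1}\|\to 0$. Any cluster point of $\{x^k\}_k$ then solves \eqref{EP} by passing to the limit in the proximal optimality condition using $(A1)$ and $(A3)$, and uniqueness from Proposition \ref{pr24} forces the entire sequence to converge to $\overline x$. The main technical obstacle is the constant-bookkeeping in the second step: one must verify that, under the stated parameter ranges $12\eta<\gamma$, $-1<\alpha<0$ and $\beta\in\,]1/(\gamma-8\eta),1/(4\eta)[$, the negative coefficient $\alpha(1+\alpha)$ indeed dominates every positive residual on $\|x^k-x^{k-1}\|^2$ introduced by the $(A5)$-based estimate and by the strong-quasiconvexity slack, so that $\Phi_k$ is genuinely nonincreasing.
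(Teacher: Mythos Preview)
The survey itself does not prove this statement; it merely records the result and cites \cite[Theorem~3.14]{IOS}, pointing to \cite[Lemmas~3.8, 3.9 \& 3.14]{IOS} as the technical ingredients. Your overall plan---extract a one-step Fej\'er estimate from the proximal inclusion via Proposition~\ref{pr6}, substitute the convex-combination identity available because $-1<\alpha<0$ makes $y^k=(1+\alpha)x^k+(-\alpha)x^{k-1}$ a genuine convex combination, and telescope a nonnegative Lyapunov sequence---is precisely the route followed in \cite{IOS} for the negative-inertia regime, so at the level of strategy you are on target.

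Two concrete points in the execution need repair, though. First, invoking Proposition~\ref{pr6} at $t=1$ cannot produce an inequality for $\|x^{k+1}-\overline x\|^2$: after polarizing $\langle y^k-x^{k+1},\overline x-x^{k+1}\rangle$, the total coefficient of $\|x^{k+1}-\overline x\|^2$ is $\tfrac{t(1-t)(1+\beta\gamma)}{2\beta}$, which vanishes at $t=1$; this is inconsistent with your remark about a ``slack $-\tfrac{1}{2\beta}\|\overline x-x^{k+1}\|^2$''. The analysis behind \cite{ILP,IOS} takes $t=\tfrac12$, which gives the coefficient $\tfrac{1+\beta\gamma}{8\beta}$ (this is exactly where the ``$8$'' in the constraint $\beta>1/(\gamma-8\eta)$ originates), and only then does the combination with $(A2)$ and $(A5)$ deliver the Fej\'er bound you quote. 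Second, your Lyapunov $\Phi_k=\|x^k-\overline x\|^2+(-\alpha)\|x^k-x^{k-1}\|^2$ does not telescope against the recursion obtained: substituting the convex-combination identity into the Fej\'er estimate leaves an uncontrolled term $\alpha\bigl(\|x^k-\overline x\|^2-\|x^{k-1}-\overline x\|^2\bigr)$. The choice that works is $\Psi_k:=\|x^k-\overline x\|^2+(-\alpha)\|x^{k-1}-\overline x\|^2$, for which one verifies directly that
\[
\Psi_{k+1}\;\leq\;\Psi_k+\alpha(1+\alpha)\|x^k-x^{k-1}\|^2-(1-4\eta\beta)\|y^k-x^{k+1}\|^2,
\]
and then your telescoping and cluster-point argument go through as written.
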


Last but not least, we recall the two-step predictor-corrector proximal point algorithm proposed in \cite{ILM} for solving \eqref{EP}. In the original source it is available in Hilbert spaces but we consider it here in the setting considered in the rest of the survey, namely finitely dimensional spaces. Before formulating the mentioned algorithm, let us note that \cite[Lemma 2]{MUY} (see also \cite{ILM, LMY} reveals the coincidence of the solution set to \eqref{EP} and its \textit{dual (Minty) equilibrium problem}
\begin{equation}\label{DEP}
 {\rm find ~} \overline{z} \in K: ~~ f(y, \overline{z}) \leq 0, ~ \forall ~ y  \in K,\tag{DEP}
\end{equation}
when the governing bifunction is strongly quasiconvex in the second variable and pseudomonotone. In general, the solution set of \eqref{EP} is included in the one of \eqref{DEP} whenever $(A2)$ holds, while the reverse inclusion holds when $(A4)$ is fulfilled (more generally, when $f$ is semistrictly quasiconvex in the second variable).

\begin{proposition} (cf. \cite[Remark 3.1$(i)$]{ILM}) \label{pr25}
When $f$ satisfies hypotheses $(Ai)$ with $i=0,2,3,4$, then the solution sets of \eqref{EP} and \eqref{DEP} coincide, and $S(K, f)$ is a singleton.
\end{proposition}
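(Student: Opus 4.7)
The plan is to establish separately the coincidence $S(K,f)=S_D(K,f)$, where $S_D(K,f)$ denotes the solution set of \eqref{DEP}, and then the fact that this common set is a singleton. The inclusion $S(K,f)\subseteq S_D(K,f)$ is immediate from $(A2)$: if $\bar x\in S(K,f)$, then $f(\bar x, y)\geq 0$ for all $y\in K$, so pseudomonotonicity yields $f(y,\bar x)\leq 0$ for all $y\in K$. For the reverse inclusion I would reproduce the Minty-type argument of \cite[Lemma 2]{MUY}: given $\bar z\in S_D(K,f)$, suppose for contradiction that there exists $y\in K$ with $f(\bar z, y)<0$, and for $t\in(0,1)$ set $y_t=ty+(1-t)\bar z\in K$, so that \eqref{DEP} gives $f(y_t,\bar z)\leq 0$. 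Strong quasiconvexity $(A4)$ of $f(y_t,\cdot)$ combined with $(A0)$, namely $f(y_t,y_t)=0$, yields
\begin{equation*}
0\leq\max\{f(y_t,y),f(y_t,\bar z)\}-t(1-t)\tfrac{\gamma}{2}\|y-\bar z\|^2,
\end{equation*}
forcing $f(y_t,y)\geq t(1-t)\tfrac{\gamma}{2}\|y-\bar z\|^2>0$. Pseudomonotonicity then gives $f(y,y_t)\leq 0$, and combining this with strong quasiconvexity of $f(y,\cdot)$ and joint lower semicontinuity $(A3)$ as $t\to 0^+$ produces the desired contradiction with $f(\bar z,y)<0$.

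For the singleton claim I would use a direct midpoint argument. Suppose $\bar z_1,\bar z_2\in S(K,f)$ and let $m=(\bar z_1+\bar z_2)/2\in K$. Strong quasiconvexity $(A4)$ of $f(\bar z_1,\cdot)$ together with $(A0)$ and the EP inequality $f(\bar z_1,\bar z_2)\geq 0$ gives
\begin{equation*}
f(\bar z_1,m)\leq\max\{f(\bar z_1,\bar z_1),f(\bar z_1,\bar z_2)\}-\tfrac{\gamma}{8}\|\bar z_1-\bar z_2\|^2=f(\bar z_1,\bar z_2)-\tfrac{\gamma}{8}\|\bar z_1-\bar z_2\|^2.
\end{equation*}
Since $\bar z_1\in S(K,f)$ and $m\in K$ imply $f(\bar z_1,m)\geq 0$, one gets $f(\bar z_1,\bar z_2)\geq\tfrac{\gamma}{8}\|\bar z_1-\bar z_2\|^2$, and pseudomonotonicity $(A2)$ then forces $f(\bar z_2,\bar z_1)\leq 0$; interchanging the roles of $\bar z_1$ and $\bar z_2$ above also gives $f(\bar z_2,\bar z_1)\geq\tfrac{\gamma}{8}\|\bar z_1-\bar z_2\|^2$, so $\|\bar z_1-\bar z_2\|^2\leq 0$ and $\bar z_1=\bar z_2$.

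The main obstacle is the reverse inclusion $S_D(K,f)\subseteq S(K,f)$: since the lower bound $f(y_t,y)\geq t(1-t)\tfrac{\gamma}{2}\|y-\bar z\|^2$ degenerates to $0$ as $t\to 0^+$, joint lower semicontinuity alone does not immediately rule out $f(\bar z,y)<0$, and the contradiction has to be extracted by carefully exploiting strong quasiconvexity in both arguments simultaneously with pseudomonotonicity along the segment $[\bar z,y]$, as in the original treatment from \cite{MUY,ILM}. The singleton claim, by contrast, is a clean one-step quadratic-growth estimate at the midpoint and is the easy half of the proposition.
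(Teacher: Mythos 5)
Your first inclusion and your uniqueness argument are fine and match the route the paper sketches just before the statement: $S(K,f)\subseteq S_D(K,f)$ is exactly the pseudomonotonicity observation, and the midpoint estimate $f(\bar z_1,\bar z_2)\geq\frac{\gamma}{8}\|\bar z_1-\bar z_2\|^2$ combined with $(A2)$ and symmetry is a clean way to get uniqueness. But two genuine gaps remain. First, the reverse inclusion $S_D(K,f)\subseteq S(K,f)$ is precisely the heart of the statement, and you do not prove it: you derive $f(y_t,y)>0$ and $f(y,y_t)\leq 0$ correctly, but then only assert that ``joint lower semicontinuity $(A3)$ as $t\to 0^+$ produces the desired contradiction'' and finally defer to \cite{MUY,ILM}. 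That mechanism cannot work as described: lower semicontinuity at $(\bar z,y)$ gives $\liminf_{t\to 0^+}f(y_t,y)\geq f(\bar z,y)$, i.e.\ a lower bound on the $\liminf$, whereas to contradict $f(\bar z,y)<0$ you need $f(\bar z,y)\geq\limsup_{t\to 0^+}f(y_t,y)\geq 0$, which is an \emph{upper} semicontinuity requirement on $f(\cdot,y)$ along the segment --- that is hypothesis $(A1)$, which is not among the hypotheses you are allowed to use here (it is a standing assumption in the sources \cite{MUY,ILM} from which the Minty-type lemma is taken). So the obstacle you flag at the end is real, and it is not resolved by ``carefully exploiting strong quasiconvexity in both arguments''; as written, the key step is missing.

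Second, ``$S(K,f)$ is a singleton'' asserts existence as well as uniqueness, and you only prove uniqueness. In the paper's line of argument (see the discussion around Proposition \ref{pr24}), nonemptiness comes from a separate existence theorem for the (dual) equilibrium problem --- \cite[Theorem 3.1]{IL3}-type results based on asymptotic functions, with the coercivity supplied by the strong quasiconvexity of $f(x,\cdot)$ --- and only then is uniqueness added on top. Your proposal never addresses why a solution exists at all, so even granting the reverse inclusion, the singleton claim would not be established. In short: the easy half (EP $\subseteq$ DEP, uniqueness) is done correctly and as in the paper, but both the Minty-type inclusion and the existence part are left to the cited literature rather than proved.
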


Different to the other proximal point type methods for solving \eqref{EP} presented above, this method includes two proximal steps, being thus a predictor-corrector method, with a first prediction step and a second correction one, in which the objective function is updated while trying to remain close enough to the previous iterate.

\begin{algorithm}[H] (see \cite[Algorithm 1]{ILM})
 \caption{2PPA-EP for Strongly Quasiconvex EP's}\label{2ppa:ep}
\begin{description}
 \item[Step 0.](Initialization). Let  $x^0\in K$, $\varepsilon >0$, $\{\beta_j\}_j \subseteq [1/(\gamma-8\eta) + \varepsilon, 1/(e\eta)-\varepsilon]$, $k=0$.

 \item[Step 1.] Determine
$$
  y^{k} \in \argmin_{x \in K} \left\{f(x^{k}, x) + \frac{1}{2 \beta_k} \lVert
  x^{k} - x \rVert^{2}\right\}. \qquad [\text{predictor proximal step}]
$$

 \item[Step 2.] If $y^{k} = x^{k}$: \textsc{stop}.

 \item[Step 3.] Determine
\begin{equation}\label{2p}
  x^{k+1} \in \argmin_{x \in K} \left\{f(y^{k}, x) + \frac{1}{2 \beta_k} \lVert
  x^{k} - x \rVert^{2}\right\}. \qquad [\text{corrector proximal step}]
\end{equation}

\item[Step 4.] Let $k\leftarrow k+1$ and go to Step 1.
 \end{description}
\end{algorithm}

\begin{remark} (see \cite[Proposition 3.1]{ILM})
When the stopping criterion in Algorithm \ref{2ppa:ep} $x^{k} = y^{k}$ is activated, the solution to \eqref{EP} has been identified, i.e. $S(K, f)=\{x^{k}\}$.
\end{remark}

\begin{remark} (see \cite[Corollary 3.2]{ILM})
When \eqref{2p} is replaced by $x^{k+1}=y^k$, then Algorithm \ref{2ppa:ep} turns into the proximal point method for solving \eqref{EP} that is \cite[Algorithm 1]{ILP}.
\end{remark}

The technical statements \cite[Propositions 3.1--3.3 \& Theorem 3.1]{ILM} provided important tools for the convergence proof of Algorithm \ref{2ppa:ep}, presented below in a simplified form. Note that in \cite[Theorem 3.1]{ILM} the convergence of the sequences generated by Algorithm \ref{2ppa:ep} is strong (recall that the result is provided there in Hilbert spaces).

\begin{theorem}\label{th8} (see \cite[Theorem 3.1]{ILM})
Let $f$ be such that assumptions $(Ai)$ with $i = 1, 2,3,4,5$ hold, $12\eta <\gamma$, $\varepsilon >0$, $\{\beta_j\}_j \subseteq [1/(\gamma-8\eta) + \varepsilon, 1/(e\eta)-\varepsilon]$, and $\{x^{k}\}_{k}$ and $\{y^{k}\}_{k}$ be the sequences generated by Algorithm \ref{2ppa:ep}. Then, denoting $\{\overline{x}\} = S(K, f)$, one has
$$\lim_{k \to \infty} \lVert x^{k} - \overline{x} \rVert=\lim_{k \to \infty} \lVert x^{k+1} - y^k \rVert=\lim_{k \to \infty} \lVert x^{k} - y^k \rVert =0,$$
and both $\{x^{k}\}_{k}$ and $\{y^{k}\}_{k}$ converge to $\overline x$.
\end{theorem}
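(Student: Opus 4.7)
The plan is to establish a quasi-Fej\'er type monotonicity of $\{x^k\}$ with respect to the (unique) solution $\overline{x}$, together with summability of $\|y^k-x^k\|^2$ and $\|x^{k+1}-y^k\|^2$, and then invoke Proposition \ref{pr24} to conclude. Since Proposition \ref{pr25} already guarantees that $S(K,f)=\{\overline{x}\}$ and that $\overline{x}$ also solves the Minty problem \eqref{DEP}, the target inequality to aim for is
\begin{equation}\label{feje}
\|x^{k+1}-\overline{x}\|^2 \leq \|x^k-\overline{x}\|^2 - c_1\|y^k-x^k\|^2 - c_2\|x^{k+1}-y^k\|^2,
\end{equation}
with constants $c_1,c_2>0$ depending only on $\gamma$, $\eta$ and the bounds of $\{\beta_k\}_k$.

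To derive \eqref{feje}, I would first observe that, because $f(x^k,\cdot)$ is strongly quasiconvex on $K$ with modulus $\gamma>0$ by $(A4)$, the predictor point $y^k$ belongs to $\px_{\beta_k f(x^k,\cdot)}(K,x^k)$, so Proposition \ref{pr6} applied at $t=1$ with test points $y=x^{k+1}$ and $y=\overline{x}$ yields two inequalities controlling $(x^k-y^k)^\top(y-y^k)$ from below in terms of $f(x^k,\cdot)$ and $\|y-y^k\|^2$. The same reasoning applied to the corrector step, where $x^{k+1}\in \px_{\beta_k f(y^k,\cdot)}(K,x^k)$ and $f(y^k,\cdot)$ is strongly quasiconvex with modulus $\gamma$, gives analogous inequalities at $y=\overline{x}$ involving $(x^k-x^{k+1})^\top(\overline{x}-x^{k+1})$, $f(y^k,\overline{x})$ and $\|\overline{x}-x^{k+1}\|^2$. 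Expanding this last inner product via the three-point identity $2(x^k-x^{k+1})^\top(\overline{x}-x^{k+1}) = \|x^{k+1}-\overline{x}\|^2 - \|x^k-\overline{x}\|^2 + \|x^k-x^{k+1}\|^2$ is what will eventually surface the desired left-hand side of \eqref{feje}.

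The main obstacle is controlling the cross terms $f(y^k,\overline{x})$ and $f(x^k,\cdot)$ that appear after these expansions. For this, I would use pseudomonotonicity $(A2)$: since $f(\overline{x},y^k)\geq 0$, one gets $f(y^k,\overline{x})\leq 0$, which neutralizes the corresponding sign. The remaining mismatch between $f(x^k,x^{k+1})$ and $f(y^k,x^{k+1})+f(x^k,y^k)$ is exactly what the Lipschitz-type assumption $(A5)$ is tailored for, giving an upper bound of the form $\eta(\|x^k-y^k\|^2 + \|y^k-x^{k+1}\|^2)$. Combining the two proximal inequalities with this estimate produces \eqref{feje} with coefficients of the shape $c_1 = \gamma/2 - \eta - 1/(2\beta_k)$-type and $c_2 = 1/(2\beta_k) - \eta$-type (up to re-scaling by $\beta_k$); the range $\beta_k\in [1/(\gamma-8\eta)+\varepsilon,\, 1/(8\eta)-\varepsilon]$ together with $12\eta<\gamma$ has been precisely calibrated so that $c_1,c_2\geq c>0$ uniformly in $k$.

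Once \eqref{feje} is established, classical Fej\'er-monotone machinery closes the argument: telescoping gives $\sum_k \|y^k-x^k\|^2<+\infty$ and $\sum_k \|x^{k+1}-y^k\|^2<+\infty$, hence the stated limits and boundedness of $\{x^k\}_k$; moreover $\lim_{k}\|x^k-\overline{x}\|$ exists. To upgrade boundedness to convergence, I would take any cluster point $\overline{z}$ of $\{x^k\}_k$, note that $y^{k_j}\to \overline{z}$ as well (since $\|x^k-y^k\|\to 0$), pass to the limit in the optimality inequality characterizing $y^{k_j}$ using $(A1)$, $(A3)$ and continuity of the quadratic term, and conclude that $\overline{z}\in S(K,f)$. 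Proposition \ref{pr24} then forces $\overline{z}=\overline{x}$, and since $\lim_k\|x^k-\overline{x}\|$ exists, the whole sequence $\{x^k\}_k$ converges to $\overline{x}$, with $\{y^k\}_k$ following by $\|x^k-y^k\|\to 0$.
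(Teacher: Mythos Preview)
Your plan is correct and matches the route taken in \cite{ILM} (which the survey cites without reproducing the argument): derive a Fej\'er-type inequality of the form \eqref{feje} by combining the proximal characterization of Proposition \ref{pr6} at the predictor and corrector steps, use $(A2)$ to kill the sign of $f(y^{k},\overline{x})$ and $(A5)$ to absorb the cross term $f(x^{k},x^{k+1})-f(x^{k},y^{k})-f(y^{k},x^{k+1})$, then telescope and close via the uniqueness of the solution (Propositions \ref{pr24}--\ref{pr25}).

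One small caveat: invoking Proposition \ref{pr6} ``at $t=1$'' is a slight oversimplification. At $t=1$ the quadratic coefficient collapses to $-1/(2\beta_{k})$ and the strong-quasiconvexity bonus disappears; in the actual analysis (cf.\ \cite[Proposition 3.4]{ILP} and \cite[Propositions 3.2--3.3]{ILM}) one has to handle the $\max$ on the left-hand side by a short case split on whether the test point lies in the sublevel set, and it is this case analysis that produces the factor $8$ in the admissible interval $[\,1/(\gamma-8\eta)+\varepsilon,\,1/(8\eta)-\varepsilon\,]$ rather than the $2$ your rough constants would suggest. This is a bookkeeping detail and does not affect the structure of your argument.
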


In the recent literature one can also find two extragradient methods for solving equilibrium problems governed by bifunctions that are strongly quasiconvex in their second variables, see \cite{MUY, LMY}. As stressed in \cite[Remark 4.1]{LMY}, the Lipschitz-continuity type assumption $(A5)$ on $f$ is not necessary for guaranteeing the convergence of these algorithms. We begin with the one proposed in \cite{MUY}.

\begin{algorithm}[H] (see \cite[Algorithm 3.1]{MUY})
 \caption{EG-EP for Strongly Quasiconvex EP's}\label{eg:ep}
\begin{description}
 \item[Step 0.](Initialization). Let  $x^0\in K$, $k=0$, $\alpha, \rho \in ]0, 1[0$, $\{\beta_j\}_j, \{\alpha_j\}_j \subseteq ]0, +\infty[$ such that
 $\{\beta_j\}_j$ nonincreasingly converges to some $\beta>0$,
 $\sum_{j=0}^{+\infty} \alpha_j = +\infty$ and $\sum_{j=0}^{+\infty} \alpha_j^2 < +\infty$.

 \item[Step 1.] Determine
$$
  y^{k} \in \argmin_{x \in K} \left\{f(x^{k}, x) + \frac{1}{2 \beta_k} \lVert x^{k} - x \rVert^{2}\right\}. \qquad [\text{proximal step}]
$$

 \item[Step 2.] If $y^{k} = x^{k}$: \textsc{stop}.

 \item[Step 3.] Determine the smallest $m\in \N$ such that
 $$ f(z^k, x^k) - f(z^k, y^k) \geq \frac {\alpha}{2\beta_k} \lVert x^{k} - y^k \rVert^{2},$$
where $z^k:=(1-\rho^m)x^k + \rho^my^k$.

\item[Step 4.] Choose
$$w^k\in \partial^{*}f(z^k, \cdot) (x^k) = \left\{w\in \R^n: w^\top (y-x^k) < 0 \text{ if }f(z^k, y) < f(z^k, x^k)\right\}  \qquad [\text{subgradient step}]$$ and compute
$$x^{k+1} = \pr_K \left(x^k - \frac{\alpha_k}{\|w^k\|} w^k\right).  \qquad [\text{projection step}]$$

\item[Step 5.] If $x^{k} = x^{k+1}$: \textsc{stop}.

\item[Step 6.] Let $k\leftarrow k+1$ and go to Step 1.
 \end{description}
\end{algorithm}

\begin{remark} (see \cite[Proposition 1$(ii)$ \& Proposition 3]{MUY})
When one of the stopping criteria in Algorithm \ref{eg:ep} is activated, the solution to \eqref{EP} has been identified. More precisely, $x^{k} = y^{k}$ yields $S(K, f)=\{x^{k}\}$, while $x^{k} = x^{k+1}$ guarantees that $S(K, f)=\{z^{k}\}$.
\end{remark}

\begin{remark}
Algorithm \ref{eg:ep} can be seen, too, as a development of the proximal point method for solving \eqref{EP} (see \cite[Algorithm 1]{ILP}) as it adds a subgradient and a projection step to the proximal one.
\end{remark}

The technical statements \cite[Propositions 1--4]{MUY} provide important tools for the convergence proof of Algorithm \ref{eg:ep}, presented below in a simplified form. Note the more involved hypotheses required in this case for algorithm convergence, when one compares them with the ones needed for other similar statements. See also \cite[Remark 2]{MUY} for some comments on this and alternate hypotheses. 

\begin{theorem}\label{th9} (see \cite[Theorem 1]{MUY})
Let $f$ be such that assumptions $(A3^{\prime})$ and $(A4)$ hold, and the solution set of \eqref{DEP} is nonempty. Let $\{x^{k}\}_{k}$ and $\{y^{k}\}_{k}$ be the sequences generated by Algorithm \ref{eg:ep}. Then, denoting $\{\overline{x}\} = S(K, f)$, if $\{y^{k}\}_{k}$ is bounded, the sequence $\{x^{k}\}_{k}$ converges to $\overline x$.
\end{theorem}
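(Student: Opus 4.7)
My plan is to combine a quasi-Fej\'er monotonicity of $\{x^k\}$ with respect to $\overline{x}$ with a quantitative strong-quasiconvex subgradient-type estimate for the chosen $w^k$, and close the argument via a standard Robbins-Siegmund recursion.

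First I would pin down the target. Since $f(x,\cdot)$ is strongly, hence semistrictly, quasiconvex for every $x\in K$ by $(A4)$, and $f$ is continuous by $(A3^{\prime})$, every Minty (\eqref{DEP}) solution is also an Ekeland (\eqref{EP}) solution; combined with uniqueness of the minimizer of a lower semicontinuous strongly quasiconvex function (Theorem~\ref{th2}), this forces $S(K,f)=\{\overline{x}\}$ to coincide with the nonempty solution set of \eqref{DEP}. In particular $f(z^k,\overline{x})\le 0$ for every $k$.

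Next I would establish the decisive inequality linking $w^k$ to $x^k-\overline{x}$. Using the normalization $f(z^k,z^k)=0$, the Armijo relation $f(z^k,x^k)-f(z^k,y^k)\ge (\alpha/(2\beta_k))\|x^k-y^k\|^2$, and the strong quasiconvexity of $f(z^k,\cdot)$ applied to $z^k=(1-\rho^m)x^k+\rho^m y^k$, I get
\[
0=f(z^k,z^k)\le f(z^k,x^k)-\rho^m(1-\rho^m)\tfrac{\gamma}{2}\|x^k-y^k\|^2,
\]
so $f(z^k,x^k)>0\ge f(z^k,\overline{x})$ whenever $x^k\neq y^k$. Thus $\overline{x}$ lies in the strict sublevel set of $f(z^k,\cdot)$ at $x^k$, and the defining property of $\partial^{*}f(z^k,\cdot)(x^k)$ at once yields the sign $(w^k)^{\top}(x^k-\overline{x})>0$; replicating the strong-quasiconvex subgradient argument of Propositions~\ref{pr19} and \ref{pro:KLx} in the $\partial^{*}$ setting sharpens it to the quantitative bound
\[
(w^k)^{\top}(x^k-\overline{x})\ge\tfrac{\gamma}{2}\|x^k-\overline{x}\|^2.
\]

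Then, nonexpansiveness of $\pr_K$ and $\overline{x}\in K$ together with this bound deliver
\[
\|x^{k+1}-\overline{x}\|^2\le \|x^k-\overline{x}\|^2-\tfrac{\alpha_k\gamma}{\|w^k\|}\|x^k-\overline{x}\|^2+\alpha_k^2.
\]
The hypothesis that $\{y^k\}$ is bounded, combined with this recursion and $\sum_k\alpha_k^2<+\infty$, propagates boundedness to $\{x^k\}$ and hence to $\{z^k\}\subseteq[x^k,y^k]$; the continuity of $f$ from $(A3^{\prime})$ then yields an upper bound $\|w^k\|\le W$ for a suitably normalized selection. The ensuing contraction-plus-noise inequality $\|x^{k+1}-\overline{x}\|^2\le (1-\alpha_k\gamma/W)\|x^k-\overline{x}\|^2+\alpha_k^2$, together with $\sum_k\alpha_k=+\infty$ and $\sum_k\alpha_k^2<+\infty$, forces $\|x^k-\overline{x}\|\to 0$ by the standard Robbins-Siegmund lemma.

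The main obstacle is precisely the upgrade from the qualitative sign condition to the quantitative inequality in the second paragraph, since $\partial^{*}f(z^k,\cdot)(x^k)$ is defined only through a strict-sublevel sign property and does not a priori encode a magnitude. Should this direct upgrade fail, one can still combine quasi-Fej\'er monotonicity with $\sum_k\alpha_k=+\infty$ to extract a subsequence along which $(w^{k_j})^{\top}(x^{k_j}-\overline{x})/\|w^{k_j}\|\to 0$, and then use boundedness of $\{y^k\}$, the continuity of $f$, and the uniqueness of $\overline{x}$ to identify every cluster point with $\overline{x}$, recovering full convergence by an Opial-type argument.
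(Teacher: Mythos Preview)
The paper is a survey and does not itself prove Theorem~\ref{th9}; it merely quotes \cite[Theorem~1]{MUY} and points to \cite[Propositions~1--4]{MUY} for the technical ingredients. So there is no in-paper proof to compare against, and I assess your argument on its own merits.

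Your central step does not go through. The inequality $(w^k)^{\top}(x^k-\overline{x})\ge\tfrac{\gamma}{2}\|x^k-\overline{x}\|^2$ cannot be extracted from the defining property of $\partial^{*}f(z^k,\cdot)(x^k)$, and the analogy with Propositions~\ref{pr19} and~\ref{pro:KLx} is misleading. The star subdifferential at $x^k$ is, up to the origin, the full normal cone to the strict sublevel set $S^{<}_{f(z^k,x^k)}(f(z^k,\cdot))$ at $x^k$; an element $w^k$ may form an angle with $x^k-\overline{x}$ as close to $\pi/2$ as one wishes while still satisfying the sign condition, so no modulus can be read off. By contrast, the gradient in Proposition~\ref{pr19} and the strong subdifferential in Proposition~\ref{pro:KLx} are tied to first-order (resp.\ proximal) inequalities that \emph{carry} the quadratic term; $\partial^{*}$ carries only a sign. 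Your contraction estimate $\|x^{k+1}-\overline{x}\|^2\le(1-\alpha_k\gamma/W)\|x^k-\overline{x}\|^2+\alpha_k^2$ therefore has no foundation. The handling of $\|w^k\|$ is also inconsistent: $\partial^{*}$ is a cone, so $\|w^k\|$ is arbitrary; only the direction $w^k/\|w^k\|$ enters the algorithm, and invoking ``a suitably normalized selection'' a posteriori does not bound anything. You also use $f(z^k,z^k)=0$, but the stated hypotheses are only $(A3^{\prime})$ and $(A4)$; none of $(A0)$, $(A2)$, $(A5)$ is assumed here, so this normalization is an extra standing assumption (present in \cite{MUY}) that you must declare.

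Your fallback sketch omits the decisive link. From quasi-Fej\'er monotonicity and $\sum_k\alpha_k=+\infty$ one indeed gets a subsequence with $(w^{k_j})^{\top}(x^{k_j}-\overline{x})/\|w^{k_j}\|\to 0$, but this only says the unit normal $w^{k_j}/\|w^{k_j}\|$ becomes asymptotically orthogonal to $x^{k_j}-\overline{x}$, which by itself does not identify the cluster point with a solution of \eqref{EP} or \eqref{DEP}. In \cite{MUY} this gap is closed \emph{via the line search and the proximal step}: one shows, using the boundedness of $\{y^k\}$ and the continuity of $f$ from $(A3^{\prime})$, that along the relevant subsequence $\|x^{k_j}-y^{k_j}\|\to 0$, and then that any common cluster point of $\{x^k\}$ and $\{y^k\}$ solves the problem; Fej\'er monotonicity then upgrades subsequential to full convergence. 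Your outline names none of these pieces, and ``uniqueness of $\overline{x}$'' alone cannot substitute for them.
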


Last but not least we present the extragradient projection method for solving strongly quasiconvex equilibrium problems proposed in \cite{LMY}, where the strong subdifferential is employed in the subgradient step instead of the star one used above (see \cite[Remark 3.3]{LMY} for a discussion of the advantages of this choice).

\begin{algorithm}[H] (see \cite[Algorithm 1]{LMY})
 \caption{PEG-EP for Strongly Quasiconvex EP's}\label{peg:ep}
\begin{description}
 \item[Step 0.](Initialization). Let  $x^0\in K$, $k=0$, $\alpha, \rho \in ]0, 1[0$, $\{\beta_j\}_j, \{\alpha_j\}_j \subseteq ]0, +\infty[$.

 \item[Step 1.] Determine
$$
  y^{k} \in \argmin_{x \in K} \left\{f(x^{k}, x) + \frac{1}{2 \beta_k} \lVert x^{k} - x \rVert^{2}\right\}. \qquad [\text{proximal step}]
$$

 \item[Step 2.] If $y^{k} = x^{k}$: \textsc{stop}.

 \item[Step 3.] Determine the smallest $m\in \N$ such that
 $$ f(z^k, x^k) - f(z^k, y^k) \geq \frac {\alpha}{2\beta_k} \lVert x^{k} - y^k \rVert^{2},$$
where $z^k:=(1-\rho^m)x^k + \rho^my^k$.

\item[Step 4.] Choose
$$w^k\in \partial^{K}_{\beta_k, \gamma}f(z^k, \cdot) (x^k) = \left\{w\in \R^n: w^\top (y-x^k) < 0 \text{ if } f(z^k, y) < f(z^k, x^k)\right\}  \qquad [\text{subgradient step}]$$ and compute
$$x^{k+1} = \pr_K \left(x^k - {\alpha_k} w^k\right).  \qquad [\text{projection step}]$$

\item[Step 5.] If $x^{k} = x^{k+1}$: \textsc{stop}.

\item[Step 6.] Let $k\leftarrow k+1$ and go to Step 1.
 \end{description}
\end{algorithm}

\begin{remark} (see \cite[Remark 3.4 \& Proposition 3.2]{LMY})
When one of the stopping criteria in Algorithm \ref{peg:ep} is activated, the solution to \eqref{EP} has been identified. More precisely, $x^{k} = y^{k}$ yields $S(K, f)=\{x^{k}\}$, while $x^{k} = x^{k+1}$ guarantees that $S(K, f)=\{z^{k}\}$.
\end{remark}

\begin{remark}
Algorithm \ref{peg:ep} can be seen, too, as a development of the proximal point method for solving \eqref{EP} (\cite[Algorithm 1]{ILP}) as it adds a subgradient and a projection step to the proximal one.
\end{remark}

The technical statements \cite[Propositions 3.1--3.5 \& 4.2]{LMY} provided important tools for the convergence proof of Algorithm \ref{peg:ep}, given below.

\begin{theorem}\label{th16} (see \cite[Theorem 3.1 $\&$ Proposition 3.6]{LMY})
Let $f$ be such that assumptions $(Ai)$ with $i = 2,3^{\prime},4$ hold, $K\subseteq \inte \dom f(x, \cdot)$ for all $x\in K$, and  denote $\{\overline{x}\} = S(K, f)$.
Let $\{x^{k}\}_{k}$, $\{y^{k}\}_{k}$ and $\{z^{k}\}_{k}$ be the sequences generated by Algorithm \ref{peg:ep} when the sequences $\{\beta_j\}_j, \{\alpha_j\}_j \subseteq ]0, +\infty[$ fulfill
$0<\alpha_j< \inf_{k\in \N} \beta_k$ for all $j\in \N$, $\sum_{j=0}^{+\infty} \alpha_j = +\infty$ and $\sum_{j=0}^{+\infty} \alpha_j^2 < +\infty$.
If there exists an $M>0$ such that for every $x\in K$ and every $k\in \N$ one has $\partial^{K}_{\beta_k, \gamma}f(x, \cdot) (x^k)\subseteq {\cal B} (0_n, M)$,
then $\{x^{k}\}_{k}$, $\{y^{k}\}_{k}$ and $\{z^{k}\}_{k}$ are bounded and converge to $\overline x$.
\end{theorem}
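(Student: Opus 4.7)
The overall plan is to establish a quasi-Fej\'er-type monotonicity of $\{x^k\}$ with respect to $\overline{x}$, deduce $x^k\to\overline{x}$ from it, and then propagate the convergence to $\{y^k\}$ and $\{z^k\}$ via continuity of $f$ and strong quasiconvexity. The driver of the Fej\'er inequality will be Proposition \ref{pro:KLx}, which controls the inner product of a strong subgradient with the direction towards any lower sublevel point.

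Accordingly, the first step is to verify that $\overline{x}\in K\cap S_{f(z^k,x^k)}(f(z^k,\cdot))$, so that Proposition \ref{pro:KLx} applies to $f(z^k,\cdot)$ at $x^k$. Assuming the algorithm has not stopped, so $x^k\neq y^k$, the Armijo-type rule from Step~3 combined with $f(z^k,z^k)=0$ and the strong quasiconvexity of $f(z^k,\cdot)$ on $K$ produces
\[
f(z^k,x^k)\geq \rho^m(1-\rho^m)\tfrac{\gamma}{2}\|x^k-y^k\|^2\geq 0,
\]
while pseudomonotonicity of $f$ together with $\overline{x}\in S(K,f)$ yields $f(z^k,\overline{x})\leq 0$. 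Thus $f(z^k,\overline{x})\leq f(z^k,x^k)$, and Proposition \ref{pro:KLx} delivers
\[
\langle w^k,\overline{x}-x^k\rangle\leq -\tfrac{\gamma\beta_k}{2}\|x^k-\overline{x}\|^2.
\]

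Expanding $\|x^{k+1}-\overline{x}\|^2=\|\pr_K(x^k-\alpha_k w^k)-\pr_K(\overline{x})\|^2$ and using the nonexpansiveness of $\pr_K$, the displayed inequality and $\|w^k\|\leq M$, one would obtain
\[
\|x^{k+1}-\overline{x}\|^2\leq \|x^k-\overline{x}\|^2-\alpha_k\gamma\beta_k\|x^k-\overline{x}\|^2+\alpha_k^2 M^2.
\]
Dropping the nonpositive middle term and invoking $\sum_k\alpha_k^2<+\infty$ gives the quasi-Fej\'er inequality $\|x^{k+1}-\overline{x}\|^2\leq \|x^k-\overline{x}\|^2+\alpha_k^2 M^2$, so that $\{x^k\}$ is bounded and $\{\|x^k-\overline{x}\|^2\}$ is convergent. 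Telescoping the previous inequality, together with $\inf_k\beta_k>0$ and $\sum_k\alpha_k=+\infty$, forces $\sum_k\alpha_k\beta_k\|x^k-\overline{x}\|^2<+\infty$; hence $\liminf_k\|x^k-\overline{x}\|=0$, and combining with the convergence of $\|x^k-\overline{x}\|^2$ one concludes $x^k\to\overline{x}$.

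For the auxiliary sequences, I would use the optimality inequality for $y^k$ at the test point $\overline{x}$,
\[
f(x^k,y^k)+\tfrac{1}{2\beta_k}\|x^k-y^k\|^2\leq f(x^k,\overline{x})+\tfrac{1}{2\beta_k}\|x^k-\overline{x}\|^2,
\]
and pass to the limit along any cluster point $y^*$ of $\{y^k\}$; its boundedness follows from the uniform $2$-supercoercivity of $f(x,\cdot)$ furnished by Theorem \ref{th0} combined with the boundedness of $\{x^k\}$. Continuity of $f$, the relation $x^k\to\overline{x}$, and $\overline{x}\in S(K,f)$ (so $f(\overline{x},y)\geq 0$ on $K$) pin down $y^*=\overline{x}$; hence $y^k\to\overline{x}$ and then $z^k=(1-\rho^m)x^k+\rho^m y^k\to\overline{x}$ as well. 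The main obstacles I anticipate are the sublevel verification in the first step, which requires chaining strong quasiconvexity, $(A0)$, the Armijo rule, and pseudomonotonicity into a single estimate, and the uniform boundedness of $\{y^k\}$, for which I would exploit the common modulus $\gamma$ in the strong quasiconvexity of $f(x,\cdot)$ together with Theorem \ref{th0}; the quasi-Fej\'er step itself does not demand $(1-\alpha_k\gamma\beta_k)\in[0,1]$ since the telescoping argument handles any magnitude of this coefficient.
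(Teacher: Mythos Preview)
Your argument for $x^k\to\overline{x}$ is correct and is essentially the approach taken in \cite{LMY}: verify $f(z^k,\overline{x})\leq 0\leq f(z^k,x^k)$ by combining pseudomonotonicity with the Armijo rule (using $f(z^k,x^k)>f(z^k,y^k)$ so that the strong-quasiconvexity inequality at $z^k$ together with $f(z^k,z^k)=0$ gives $f(z^k,x^k)\geq 0$), then invoke Proposition~\ref{pro:KLx} to bound $\langle w^k,\overline{x}-x^k\rangle$, and finally derive the quasi-Fej\'er inequality and telescope. This matches the line of \cite[Propositions~3.1--3.5 and Theorem~3.1]{LMY}. Note, however, that you are implicitly using $(A0)$ ($f(x,x)=0$), which is not among $(A2),(A3'),(A4)$ and does not follow from them without $(A5)$; in \cite{LMY} it is a standing assumption, so this should be made explicit.

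The weak spot is your treatment of $\{y^k\}$. Theorem~\ref{th0} yields $2$-supercoercivity of each $f(x^k,\cdot)$ separately, but the ensuing growth estimate is anchored at the (unknown, $k$-dependent) minimizer of $f(x^k,\cdot)$, so ``uniform $2$-supercoercivity'' is not an immediate consequence. More importantly, your displayed optimality inequality only gives $f(x^k,y^k)+\tfrac{1}{2\beta_k}\lVert x^k-y^k\rVert^2\leq C$, and since the hypotheses do \emph{not} bound $\{\beta_k\}$ from above, this alone does not control $\lVert x^k-y^k\rVert$. In \cite{LMY} the boundedness and convergence of $\{y^k\}$ and $\{z^k\}$ are handled separately (this is precisely the content of \cite[Proposition~3.6]{LMY} referenced in the statement), and the argument requires exploiting the common modulus $\gamma$ via the proximal characterization (Proposition~\ref{pr6}) rather than Theorem~\ref{th0} alone. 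Your sketch gestures at the right ingredients but, as written, does not close this gap; you should either supply a genuine uniform-in-$k$ lower bound for $f(x^k,\cdot)$ relative to a fixed anchor, or argue directly through Proposition~\ref{pr6} applied to $y^k\in\px_{\beta_k f(x^k,\cdot)}(K,x^k)$ with the test point $\overline{x}$.
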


\begin{remark}
Results on convergence rates of the iterative methods mentioned above for solving \eqref{EP} can be found in \cite[Theorem 3.11, Theorem 3.15 $\&$ Theorem 3.16]{IOS}, \cite[Corollary 3.1]{LMY}, and, in Hilbert spaces, in \cite[Corollaries 3.1--3.2]{ILM}. It should be noted, however, that the hypotheses of \cite[Theorem 3.16]{IOS} seem to be contradictory as they ask both $\alpha <0$ and $\alpha =0$.
Note also \cite[Corollary 20]{GLT} where the number of iterations necessary for Algorithm \ref{rippa:ep} to reach a suitable approximation of the solution of \eqref{EP} is provided.
\end{remark}

\begin{remark}
Numerical experiments showing the usability and performance of the iterative methods for solving \eqref{EP} can be found in most of the papers mentioned in this section.
In \cite[Section 4]{MUY} one can see how is Algorithm \ref{eg:ep} performing on some simple equilibrium problems. In \cite[Section 4.2]{ILM} it is discussed that Algorithm \ref{2ppa:ep} outperforms both the proximal point method for solving \eqref{EP} given in \cite[Algorithm 1]{ILP} and the extragradient method Algorithm \ref{eg:ep} when dealing with some concrete equilibrium problems governed by bifunctions that are strongly quasiconvex in their second variables. In \cite[Remark 4.1]{IOS} it is discussed that for a suitable choice of the inertial parameter (namely when it approaches $0$ from any side) ensures that Algorithm \ref{ieppa:ep} outperforms its standard proximal point counterpart \cite[Algorithm 1]{ILP} in the task of solving \eqref{EP}. Similarly, in \cite[Section 4.2]{GLT} it is shown that Algorithm \ref{rippa:ep} outperforms \cite[Algorithm 1]{ILP} for certain constellations of parameters, while in \cite[Section 4.2]{LMY} the same is stressed for Algorithm \ref{peg:ep} in relation with Algorithm \ref{eg:ep}
\end{remark}

\begin{remark}
While no dynamical systems for asymptotically approaching the solution to \eqref{EP} are known to us (albeit continuous versions of the algorithms presented in this section are likely to be possible to formulate and study), in \cite{CGC} one can find a study on asymptotic convergence properties of saddle-point dynamics governed by $C^1$ bifunctions, where some results are formulated under strong quasiconvexity hypotheses. More precisely, in \cite[Proposition 4.8]{CGC} one finds results on local asymptotic stability of the set of saddle points of the considered bifunction.
\end{remark}

\section{Future developments}\label{sec:6}

In the following we discuss some ideas and possible directions for further studying strongly quasiconvex functions and optimization problems involving them.

Further theoretical investigations into the properties of strongly quasiconvex functions are always welcome. In particular, as all such functions known so far (see Subsection \ref{su31}) are continuous, a legitimate question is whether the real-valued strongly quasiconvex functions defined over $\R^n$ are continuous (like their convex counterparts). Of course, identifying other examples of strongly quasiconvex functions (with practical relevance) would contribute to advances in their study as well.

While many properties of the strong subdifferential (of a strongly quasiconvex function) have already been established (in \cite{KAL,LMC,LY}), at least two aspects regarding them could have interesting implications. One concerns calculus properties (in particular, the identification of hypotheses guaranteeing some inclusion relation between the strong subdifferential of a sum of functions and the sum of the strong subdifferentials of the involved functions) and the other computational methods for determining (or approximating) it. For instance, the latter would yield the practicability of the subgradient algorithm proposed in \cite{LMC} for minimizing a strongly quasiconvex function over a closed convex set. Moreover, other optimality conditions (in terms of the strong subdifferential but not only) for optimization problems involving strongly quasiconvex functions that exploit the specificities of the latter would surely be welcome.

While in general the proximity operator of a strongly quasiconvex function is set-valued, there are examples when it is a single-valued mapping (see Remark \ref{re1}). We would be interested in identifying the (or, at least a) class of strongly quasiconvex functions which remain strongly quasiconvex when added to a half of a squared norm (possibly multiplied by a positive constant). A follow-up question to this issue is a possible connection to prox-convex functions, more precisely, is this sought class maybe the intersection of the sets of strongly quasiconvex and prox-convex functions?

Proximity operators of strongly quasiconvex functions are known only in some isolated cases (e.g. when the functions are also convex), and closed forms for proximity operators on closed convex subsets are mostly not available even for convex functions. Possible ideas for providing progress in this direction could use some hints from \cite{BRL} (where the proximity operator of the root function is determined) or \cite{GSV} (where the proximity operator of a convex function over a nonconvex set is computed). Other related contributions are \cite{HSA} (a method based on the computation of proximal points of piecewise affine models of certain classes of nonconvex functions is proposed for determining the proximal points of the latter), \cite{LLT} (a symbolic computation method) and \cite{FGO} (an interior method). Moreover, determining the Bregman proximity operator of a strongly quasiconvex function on a certain closed convex set would render the Bregman proximal point method for minimizing strongly quasiconvex functions from \cite{LAM} usable in practice.

It also remains an open question if there are choices of the inertial and relaxation parameters of the relaxed-inertial proximal point algorithms for minimizing strongly quasiconvex functions (over closed convex sets or linear subspaces) and for solving equilibrium problems involving bifunctions strongly quasiconvex in the second variable (see \cite{GLM, GLT}) that guarantee an acceleration of the standard proximal point method (similar to the situation the convex framework studied in \cite{ATC}, see also Remark \ref{re2}). Improvements on the hypotheses required for establishing the convergence of the proximal point methods for solving optimization problems involving strongly quasiconvex functions would be welcome as well, for instance with respect to the Lipschitz-type condition $(A5)$.

Extending the applicability of other (proximal point type) algorithms from the convex framework to optimization problems involving strongly quasiconvex functions is another topic for subsequent research. Related to it, investigations on whether other algorithms that are accelerated (with respect to the convex setting) when the involved function is strongly convex remain accelerated when the latter is strongly quasiconvex would be of interest, too, in the light of Remark \ref{re14}.

Regarding the dynamical systems governed by gradients of strongly quasiconvex functions, a legitimate question concerns the necessity of imposing boundedness hypotheses on their trajectories in order to ensure their convergence to the minimizers of the considered functions in \cite{ROP, RPK}. Related to it, do the sequences generated by the algorithms derived from these dynamical systems via time-discretization have to be bounded in order to converge to the minimizers of the involved functions?

Worth exploring seem to be also splitting type (proximal point) algorithms for minimizing sums involving strongly quasiconvex functions. A first step in this direction was given in \cite{LY}, in which the authors developed the proximal gradient method for minimizing the sum of two nonconvex functions, one nonsmooth and strongly quasiconvex, and the other  differentiable with Lipschitz-continuous gradient (and possibly nonconvex, too), by using the strong subdifferential.
Similarly, algorithms for minimizing differences of convex functions might be adaptable for minimizing differences of functions involving at least one that is strongly quasiconvex. Furthermore, extending the usability of some algorithms from strongly quasiconvex functions to (other classes of) quasiconvex ones (as done in \cite{LAR, GLM}) is another path whose pursuing could have important practical consequences, given the various applications involving quasiconvex functions (for instance in economics).

Last but not least, taking into consideration that first order methods such as proximal point and gradient type ones have been proven to deliver minima for both convex and strongly quasiconvex functions, even though each of these classes contains elements that do not belong to the other, we are wondering: {\it
which is the largest class of functions (including, in particular, both convex and strongly quasiconvex ones) for which first order methods exhibit a similar behavior?}
This will be the matter of some of our subsequent work.

\subsection{Availability of supporting data}

No data sets were generated during the current study. The used
{\sc matlab} codes are available from all authors on reasonable request.

\subsection{Competing interests}

There are no conflicts of interest or competing interests related to this
manuscript.

\subsection{Funding}

The authors would like to thank the MATH AmSud cooperation program
(Pro\-ject AMSUD-220020) for its support. This research was partially
supported by Anid--Chile under project Fondecyt Regular 1241040
(Lara), by a public grant as part of the Investissement d'avenir project, reference
ANR-11-LABX-0056-LMH, LabEx LMH (Grad), and by BASAL fund FB210005 for center of excellence from ANID-Chile (Marcavillaca).

\subsection{Authors' contributions}

All authors contributed equally to the study conception, design and implementation, and wrote and corrected the manuscript.

\subsection{Acknowledgments}

The authors are grateful to the Guest Editors for the invitation to contribute to this Special Issue with a survey paper, to Alireza Kabgani and Phan Tu Vuong for several suggestions that led to improvements in the presentation of the strong subdifferential and for the joint works \cite{GNV, LMV}, respectively, and to other colleagues for their feedback and questions on issues related to strongly quasiconvex functions at various conferences and workshops where we presented results involving strongly quasiconvex functions. Finally, we would like to dedicate this work also to our former Master's student MSc. Juan Choque, who passed away on August 13, 2024, and whose only scientific publication \cite{LMC} deals with strongly quasiconvex functions.

\end{document}